\documentclass{article}
\usepackage{amsthm,amsmath,amssymb, pgf, color, tikz, pgfpict2e, graphicx, arydshln, subcaption,soul,enumitem}
\usepackage{tocloft}
\usepackage{comment}
\definecolor{aqua}{rgb}{0.0, 1.0, 1.0}
\definecolor{blue-violet}{rgb}{0.54, 0.17, 0.89}
\theoremstyle{plain}
\newtheorem{thm}{Theorem}[section]

\newtheorem{obs}[thm]{Observation}
\newtheorem{lem}[thm]{Lemma}
\newtheorem{prop}[thm]{Proposition}
\newtheorem{defn}[thm]{Definition}
\newtheorem{remark}[thm]{Remark}
\newtheorem{exa}[thm]{Example}

\tikzstyle{vertex}=[circle, draw, inner sep=0pt, minimum size=8pt]
\tikzstyle{bvertex}=[circle, blue!90!black, fill, draw=black, inner sep=0pt, minimum size=8pt]
\tikzstyle{rvertex}=[circle, red!50, fill, draw=black, inner sep=0pt, minimum size=8pt]
\tikzstyle{gvertex}=[circle, green!70!black, fill, draw=black, inner sep=0pt, minimum size=8pt]
\tikzstyle{Bvertex}=[circle, black, fill, draw, inner sep=0pt, minimum size=10pt]
\usetikzlibrary{decorations.pathmorphing}
\tikzset{
  wiggly/.style={
    decorate,
    decoration={snake, amplitude=.5mm, segment length=3.5mm}
  }
}

\newcommand{\bvertex}{\node[bvertex]}
\newcommand{\rvertex}{\node[rvertex]}
\newcommand{\gvertex}{\node[gvertex]}

\newcommand{\wgt}{$wgt$}
\newcommand{\ghat}{\widehat{G}}

\usepackage{authblk}

\newcommand{\CSB}{\mathcal{C}\mathcal{S}\mathcal{B}}
\newcommand{\OSB}{\mathcal{O}\mathcal{S}\mathcal{B}}

\newcommand{\SBV}{\mathcal{S}\mathcal{B}\mathcal{V}}
\newcommand{\BV}{\mathcal{B}\mathcal{V}}
\newcommand{\PB}{\mathcal{P}\mathcal{B}}
\newcommand{\NBC}{\mathcal{N}\mathcal{B}\mathcal{C}}
\newcommand{\CNBC}{\mathcal{C}\mathcal{N}\mathcal{B}\mathcal{C}}

\title{

Color $2$-switches and neighborhood  $\lambda$-balanced graphs with $k$ colors

}

\author[1]{Karen L. Collins} 
\affil[1]{Dept. of Math. and Comp. Sci, Wesleyan University}

\author[2]{Jonelle Hook} 
\affil[2]{Dept. of Math. and Comp. Sci., Mount St.\ Mary's University, Emmitsburg, MD}

\author[3]{Cayla McBee} 
\affil[3]{Dept. of Math. and Comp. Sci., Providence College, Providence, RI}

\author[4]{Ann N. Trenk} 
\affil[4]{Dept. of Math. and Stat., Wellesley College, Wellesley, MA}

\date{\today}

\begin{document}

\maketitle

\begin{abstract} 

This paper examines vertex colorings of graphs (not necessarily proper colorings) with constraints on the distribution of colors in vertex neighborhoods.  We introduce color 2-switches and color degree matrices.  The color degree matrix of a $k$-colored graph is an analog of the degree sequence, while a color 2-switch provides a way to transform a $k$-colored graph to another such graph while maintaining the color of each vertex and the multiset of colors in each vertex neighborhood.  We prove that two $k$-colored graphs have the same color degree matrix if and only if one can be obtained from the other by a sequence of color 2-switches.

In related work, we generalize the neighborhood balanced colorings in \cite{Coetal} and \cite{FM24} by allowing for $k$ colors (instead of two) and more flexibility on the number of vertices of each color in a neighborhood.  We introduce three classes of $k$-colored, $\lambda$-balanced graphs, in which any two color classes in a vertex neighborhood differ in size by at most $\lambda$.  These classes are distinguished by whether the balancing condition is imposed on the open neighborhood $N(v)$, the closed neighborhood $N[v]$, or allowed to vary by vertex.  For each class, the minimum $\lambda$ for which a graph admits a balanced coloring defines its $\lambda$-balance number.  We prove general results about these classes and their $\lambda$-balance numbers. For $k = 2$, we introduce a fourth class, parity balanced graphs, in which  the number of vertices of each color are equal in open neighborhoods for even-degree vertices  and in closed neighborhoods for odd-degree vertices. 

Additionally, we focus on the important case where $k=2$ and $\lambda \le 1$ and introduce the technique of red-blue removals.  We provide separating examples between these four classes and prove balance number results for paths, cycles, wheels, trees,  caterpillars, and complete multipartite graphs, and a counting result for caterpillars.

\end{abstract}

\section{Introduction}
\label{sec-intro}

Many classic graph properties can be described using local conditions.  For example, in defining a proper coloring, we can focus on one vertex at a time  to ensure that its color differs from that of each of its neighbors.  In this paper, we study colorings that are not necessarily proper, and we constrain the multisets of colors that appear in vertex neighborhoods.  If $G$ is a  graph and $v \in V(G)$, then the \emph{open neighborhood} of $v$, denoted by $N(v)$, is the set $\{u:uv \in E(G)\}$.  The \emph{closed neighborhood} of $v$, denoted by $N[v]$, is the set $ N(v) \cup \{v\}$.

Freyberg and Marr \cite{FM24} define a \emph{neighborhood balanced coloring} of a graph to be a coloring of the vertices using two colors (red and blue) so that  for each vertex $v$ the number of red and blue vertices in $N(v)$ is equal.  A graph with such a coloring is called an \emph{NBC} graph and the set of all NBC graphs is denoted by $\NBC$.  Collins et al. \cite{Coetal} define the analogous \emph{closed neighborhood balanced coloring}  when $N(v)$ is replaced by $N[v]$.  A graph with such a coloring is called a \emph{CNBC} graph and the set of all CNBC graphs is denoted by $\CNBC$. Recent work explores several generalizations of results in  \cite{Coetal} and \cite{FM24}.  In  \cite{MS_arxiv}, the authors generalize NBC graphs for three colors instead of two, and in \cite{APGS_arxiv} and  \cite{ASGP_arxiv}, the authors generalize NBC graphs and CNBC graphs for $k$ colors.  
Finally, in \cite{A_arxiv}, the author defines and studies quasi neighborhood balanced colorings which in the language of this paper are $(2,1)$-balanced colorings (see Definition~\ref{open-bal-def}).

By definition, every vertex of an NBC graph has even degree and every vertex of a CNBC graph has odd degree. 
In this paper, we relax the criteria for balance in several different ways to accommodate broader classes of graphs. In particular, we extend the framework to allow more than two colors and weaken the strict requirements for balance. These generalized classes model scenarios where multiple types of resources must be distributed among vertices such that each vertex has a balanced mix of resources for its neighborhood. For example, in farming or experimental design, one may wish to plant different types of crops in a layout so that each plot has a balance of crops in adjacent plots. 
 
A \emph{$k$-coloring} of a graph $G$ is a partition of $V(G)$ 
   into  color classes $C_1 \cup C_2 \cup \cdots \cup C_k$.  Vertex $x$ has \emph{color $i$} if $x\in C_i$, and in this case we write $c(x) = i$.
 Note that  in our $k$-colorings, it is permissible for adjacent vertices to receive the same color. The \emph{color $j$ degree}  (or $C_j$-degree) of vertex $x$ is   $|N(x) \cap C_j|$ and is denoted by $C_j$-$\deg(x)$.  With this new notation, a graph $G$ is an NBC graph if it has a $2$-coloring $V(G) = C_1 \cup C_2$ so that $C_1$-$\deg(x) = C_2$-$\deg(x)$  for all $x \in V(G)$. We define similar concepts for closed neighborhoods in Section \ref{sec-three-classes}.

In our next definition, we generalize the concept of the  degree sequence  to a graph with a $k$-coloring.  
For each vertex, we record  its color and the number of neighbors it has in each color class.  We store this information in a matrix where the rows are indexed by the vertices, the first $k$ columns are indexed by the colors 1 through $k$, and the last column is a \emph{color-identifier} column where we record the color of  each vertex. We illustrate the definition of a color degree matrix in  Example~\ref{exa:matrix}.

 \begin{defn} \rm 
Let $G$ be a graph with  vertex set   $\{v_1,v_2, \ldots, v_n\}$ and  $k$-coloring  $C_1 \cup C_2 \cup \cdots \cup C_k$. The \textit{color degree matrix} $D$  is the 
  $n \times (k+1)$ matrix in which $D_{ij} = C_j$-$\deg(v_i)$ for $j \leq k$ and $D_{ij} = c(v_i)$ for $j=k+1$. When there is more than one graph under consideration, we  denote the color degree matrix  of graph $G$ as $D(G)$.
  When $k \le 3$, we use red for color 1, blue  for color 2 and green for color 3, and in the color-identifier column we use {\text{\color{red}R}}, {\text{\color{blue}B}}, and {\text{\color{green!80!black}G}} in place of $1$, $2$, and $3$ respectively.
 \label{deg-matrix}
 \end{defn}

 \begin{remark} {\rm 
 A color degree matrix for a graph with a $k$-coloring is determined by an ordering of its vertices.  We write $D(G)$ for the color degree matrix of $G$ just as it is standard to write $A(G)$ for the adjacency matrix of $G$, which also depends on an ordering of the vertices of $G$. }
 \end{remark}

When the colors are permuted in a coloring of a graph, the color degree matrix changes.  For example, if the red and blue vertices are interchanged in the coloring of graph $G$ in Figure~\ref{house-fig}, then the resulting color degree matrix $D$ is obtained from $D(G)$ depicted in Figure \ref{fig:colordegreematrix} by swapping columns 1 and 2 and swapping {\text{\color{blue}B}}'s and {\text{\color{red}R}}'s in column 4.

\begin{exa} \label{exa:matrix}\rm 
Let $G, H, G'$, and $H'$ be the graphs shown in Figure~\ref{house-fig}. 
 Suppose there are three colors available: color 1 (red),  color 2 (blue), and color 3 (green) and  the vertices are colored  and indexed as in the figure.
The resulting color degree matrices are given in Figure~\ref{fig:colordegreematrix}.  Since there are no green vertices in these colorings, column $3$ consists of zeros in each of the matrices and no {\text{\color{green!80!black}G}} appears in the color-identifier column.
    
\end{exa}

Well-known theorems in graph theory characterize when a sequence of non-negative integers is graphic  and bigraphic  (e.g., see \cite[p. 45]{We01} and (\cite[p. 185]{We01}).
  This leads to an analogous result of characterizing which matrices are color degree matrices of graphs with a $k$-coloring, which we present in Observation~\ref{graphic-obs}.

\begin{obs}\rm
Let $D$ be an $n \times (k+1)$ matrix whose entries are nonnegative integers and refer to the entry $D_{i,k+1} $ as the color-identifier for row $i$. Then $D$ is the color degree matrix for a $k$-colored graph if and only if

\begin{enumerate} 
\item the color-identifier entry $D_{i,k+1}$ is in the set $ \{1,2,3, \ldots, k\}$ for each $i$, 

\item  the sequence of entries in column $j$ of $D$, whose row has color-identifier $j$, is a graphic sequence for each $j: 1\leq j\leq k$, and 

\item the pair $(p,q)$, where $p$ is the sequence  of   entries in column $j$ whose  row has color-identifier $i$, and  $q$ is the sequence of the entries in column $i$ whose row has  color-identifier  $j$, form a bigraphic sequence, for  each $i \neq j$.
\end{enumerate} 
\label{graphic-obs}
\end{obs}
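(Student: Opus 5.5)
The proof splits into the two directions of the statement. The key observation is that the edge set of any $k$-colored graph $G$ decomposes into disjoint pieces. For each color $j$ there is the induced subgraph $G[C_j]$. For each unordered pair $i\neq j$ there is the bipartite subgraph $G[C_i,C_j]$, whose parts are $C_i$ and $C_j$ and whose edges are exactly the edges of $G$ with one end in each. Every edge lies in exactly one of these pieces. The entries of the color degree matrix can be read off piece by piece: for $x\in C_i$, the entry $C_j$-$\deg(x)$ is the degree of $x$ in $G[C_i]$ when $j=i$, and its degree in $G[C_i,C_j]$ when $j\neq i$.

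For necessity, suppose $D=D(G)$ for a $k$-colored graph $G$ with vertices $v_1,\ldots,v_n$. Condition 1 holds because $D_{i,k+1}=c(v_i)\in\{1,\ldots,k\}$. For condition 2, fix $j$. The entries in column $j$ of the rows with color-identifier $j$ are exactly the degrees of the vertices of $G[C_j]$ inside that graph, so they form a graphic sequence. For condition 3, fix $i\neq j$. The sequence $p$ of column-$j$ entries in rows with identifier $i$ gives the degrees of the vertices of $C_i$ in $G[C_i,C_j]$. The sequence $q$ of column-$i$ entries in rows with identifier $j$ gives the degrees of the vertices of $C_j$ in the same bipartite graph. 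Hence $(p,q)$ is bigraphic.

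For sufficiency, I would take vertex set $\{v_1,\ldots,v_n\}$ and put $v_r$ in $C_i$ whenever $D_{r,k+1}=i$; this is a valid $k$-coloring by condition 1. For each $j$, condition 2 gives a simple graph $H_j$ on the vertices of $C_j$ realizing the column-$j$ entries of those rows, with the realization assigned to the vertices in the order of the rows. For each pair $i<j$, condition 3 gives a simple bipartite graph $B_{ij}$ with parts $C_i$ and $C_j$ realizing $(p,q)$, again assigned to the correct vertices. Let $G$ be the union of all the $H_j$ and all the $B_{ij}$. The only points needing care are these.

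\emph{$G$ is simple.} The pieces have pairwise disjoint sets of possible edges, since each possible edge has its endpoint colors determined, so the union creates no multiple edges.

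\emph{$G$ realizes $D$.} For $v_r\in C_i$, its $C_j$-degree in $G$ comes only from $H_i$ when $j=i$, or only from $B_{ij}$ when $j\neq i$. It therefore equals $D_{rj}$, so $D(G)=D$ under this vertex ordering.

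\emph{Empty classes.} If $C_j$ is empty, the corresponding sequences are empty, which are trivially graphic or bigraphic, and the argument goes through unchanged.

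There is no real obstacle here. The only subtle point is to state the decomposition precisely, noting that in condition 3 the roles of columns and color-identifiers are crossed, and to record that the realizations are indexed by the correct vertices.
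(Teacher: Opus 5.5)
Your proof is correct. It follows the approach the paper has in mind: the paper states this as an observation without proof, relying on the graphic and bigraphic realizability facts it cites from West, and your splitting of the edge set into the pieces $G[C_j]$ and the bipartite pieces between $C_i$ and $C_j$ supplies the omitted details. One minor slip: when $C_j=\emptyset$ but $C_i\neq\emptyset$, only $q$ is empty, while $p$ has $|C_i|$ entries that condition 3 forces to be zero; your general construction already covers this, so no separate case is needed.
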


In Figure \ref{fig:colordegreematrix}, the sequence of entries in column 1 of $D(G)$ corresponding to rows with color-identifier {\text{\color{red}R}} is the graphic sequence 1,1. Likewise, the sequence of entries in column 2 of $D(G)$ corresponding to rows with color-identifier {\text{\color{blue}B}} is the graphic sequence 0,1,1. Together, the sequences 2,1,1 and 2,2 constitute a bigraphic sequence.

The rest of the paper is organized as follows.
 In Section~\ref{sec-two-switch}, we define color $2$-switches which allow us to transform a $k$-colored graph to another $k$-colored graph while maintaining vertex colors and the quantity $|N(x) \cap C_j|$ for all vertices $x$ and all color classes $C_j$.  We prove that two graphs have the same color degree  matrix if and only if there is a sequence of color $2$-switches that transforms one graph to the other.
 In Section~\ref{sec-three-classes}, we define three classes of $k$-colored, $\lambda$-balanced graphs that generalize NBC and CNBC graphs allowing for $k$ colors (rather than $2$)  and $\lambda$-balance, a more flexible restriction on  the multiset of colors  in a vertex neighborhood.  We also provide techniques for constructing examples of graphs in one or more of these classes. The  balance number for each class is the minimum $\lambda$ for which a graph has a $k$-coloring that is $\lambda$-balanced and this is introduced in Section~\ref{sec-bal-number}.  Each of the three balance numbers can be arbitrarily large, but we prove inequalities relating the balance numbers  to one another and to  the maximum degree, and prove these inequalities are tight. In Sections~\ref{sec-pb-2-1}, \ref{caterpillar-sec}, and \ref{complete-multipartite-sec},
we restrict attention to the important case where $k=2$ and $\lambda \le 1$ and introduce a fourth class that generalizes NBC and CNBC graphs. Our Venn diagram (Figure \ref{fig-venn}) shows containments and separating examples for the four classes. We introduce the technique of red-blue removals to prove balance number results. We also focus on particular graph classes:  paths, cycles, wheels, trees, caterpillars, and complete multipartite graphs.

\section{Color $2$-switches}
\label{sec-two-switch}

For two graphs with  the same degree sequence,  a classic result in graph theory provides a way to transform one to the other using a sequence of $2$-switches (e.g., see \cite[p. 47]{We01}).  A \emph{$2$-switch} in graph $G$  in which  $ux,wy\in E(G)$, and $uy, wx\not\in E(G)$ is a replacement of the edges $ux$ and $wy$ with the edges $uy$ and $wx$. Figure \ref{house-fig} shows two non-isomorphic graphs that each have degree sequence $3,3,2,2,2.$   The $2$-switch 
in which edges $v_2v_5$ and $v_3v_4$ are replaced by $v_2v_4$ and $v_3v_5$ transforms $G$ to $H$.
In Theorem~\ref{two-switch-thm-1}, we prove an analog of this result for color degree matrices.

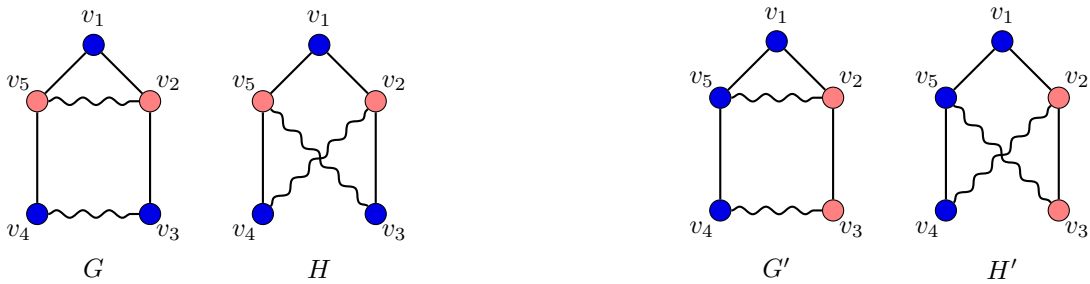
\begin{figure}[ht]
    \centering
    \begin{minipage}{0.45\textwidth}
    \centering
       \begin{tikzpicture}[scale=.75, every node/.style={circle, draw, minimum size=8pt, inner sep=0pt}]
    \node[fill=blue!90!black, label=above:$v_1$] (v3) at (1,3) {};
    \node[fill=red!50, label=above left:$v_5$] (v4) at (0,2) {};
    \node[fill=blue!90!black, label=below left:$v_4$] (v1) at (0,0) {};
    \node[fill=blue!90!black, label=below right:$v_3$] (v2) at (2,0) {};
    \node[fill=red!50, label=above right:$v_2$] (v5) at (2,2) {};
    \node[draw=none, fill=none, label=below:$G$] (label) at (1,-.5) {};
    \draw[thick] (v1)--(v4);
    \draw[thick] (v2)--(v5);
    \draw[thick] (v3)--(v4);
    \draw[thick] (v3)--(v5);
    \draw[thick, wiggly] (v1) -- (v2);
    \draw[thick, wiggly] (v4) -- (v5);

    \node[fill=blue!90!black, label=above:$v_1$] (v3) at (5,3) {};
    \node[fill=red!50, label=above left:$v_5$] (v4) at (4,2) {};
    \node[fill=blue!90!black, label=below left:$v_4$] (v1) at (4,0) {};
    \node[fill=blue!90!black, label=below right:$v_3$] (v2) at (6,0) {};
    \node[fill=red!50, label=above right:$v_2$] (v5) at (6,2) {};
    \node[draw=none, fill=none, label=below:$H$] (label) at (5,-.5) {};
    \draw[thick] (v1)--(v4);
    \draw[thick] (v2)--(v5);
    \draw[thick] (v3)--(v4);
    \draw[thick] (v3)--(v5);
    \draw[thick, wiggly] (v1) -- (v5);
    \draw[thick, wiggly] (v2) -- (v4);
\end{tikzpicture}
    \end{minipage}
    \hfill
    \begin{minipage}{0.45\textwidth}
    \centering
        \begin{tikzpicture}[scale=.75, every node/.style={circle, draw, minimum size=8pt, inner sep=0pt}]
    \node[fill=blue!90!black, label=above:$v_1$] (w2) at (1,3) {};
    \node[fill=blue!90!black, label=above left:$v_5$] (w1) at (0,2) {};
    \node[fill=blue!90!black, label=below left:$v_4$] (w3) at (0,0) {};
    \node[fill=red!50, label=below right:$v_3$] (w5) at (2,0) {};
    \node[fill=red!50, label=above right:$v_2$] (w4) at (2,2) {};
    \node[draw=none, fill=none, label=below:$G'$] (label) at (1,-.5) {};
    \draw[thick] (w1)--(w2);
    \draw[thick] (w1)--(w3);
    \draw[thick] (w2)--(w4);
    \draw[thick] (w4)--(w5);
    \draw[thick, wiggly] (w1) -- (w4);
    \draw[thick, wiggly] (w3) -- (w5);

    \node[fill=blue!90!black, label=above:$v_1$] (w2) at (5,3) {};
    \node[fill=blue!90!black, label=above left:$v_5$] (w1) at (4,2) {};
    \node[fill=blue!90!black, label=below left:$v_4$] (w3) at (4,0) {};
    \node[fill=red!50, label=below right:$v_3$] (w5) at (6,0) {};
    \node[fill=red!50, label=above right:$v_2$] (w4) at (6,2) {};
    \node[draw=none, fill=none, label=below:$H'$] (label) at (5,-.5) {};
    \draw[thick] (w1)--(w2);
    \draw[thick] (w1)--(w3);
    \draw[thick] (w2)--(w4);
    \draw[thick] (w4)--(w5);
    \draw[thick, wiggly] (w1)--(w5);
    \draw[thick, wiggly] (w3)--(w4);
\end{tikzpicture}
    \end{minipage}
    \caption{A 2-switch in which edges $v_2v_5$ and $v_3v_4$ are replaced by $v_2v_4$ and $v_3v_5$.  This is not a color $2$-switch for the first two graphs, but is a color $2$-switch  transforming $G'$ into $H'$. 
     }
    \label{house-fig}
\end{figure}

In order to maintain the multiset of colors in $N(v)$ for any vertex $v$, our  next definition takes into account the colors of the vertices in the $2$-switch.  Note that in Definition~\ref{2-switch} it is possible for all four of $u,w,x,y$ to be the same color.

\begin{defn} \label{2-switch} \rm Let $G$ be a graph in which each vertex has a color and let $u,w$ be vertices of the same color and let $x,y$ be vertices of the same color such that 
$ux,wy\in E(G)$, and $uy, wx\not\in E(G)$. A \emph{color $2$-switch} of $G$ with  this coloring   is a replacement of the edges $ux$ and $wy$ with the edges $uy$ and $wx$.
\end{defn}

 For a graph in which the vertices are assigned colors, the color of a vertex is the same after a $2$-switch, however, the colors of the vertices in its neighborhood may change.   For example, consider the colorings of graphs $G$ and $H$ in Figure~\ref{house-fig}, the 2-switch is \emph{not} a color 2-switch as $v_4$ and $v_5$ have different colors, and thus $v_2$ loses an adjacency to the red vertex $v_5$ and gains an adjacency to the blue vertex $v_4$.  The graphs $G$ and $H$ will have different color degree matrices no matter how the vertices are indexed because both red vertices of $H$ have three blue neighbors, while this is not true in $G$.
 For the $2$-colored graphs $G'$ and $H'$ in Figure~\ref{house-fig}, the 
 the same $2$-switch \emph{is} a color $2$-switch, and these will always preserve the number of red and blue neighbors of each vertex, as we note in the following remark.

\begin{remark} \label{rem:color-d-m}
    Color 2-switches do not affect the color of a vertex or the multiset of colors appearing among its neighbors. Thus, the color degree matrix of a graph remains the same after a color $2$-switch.
\end{remark}

\begin{figure}
    \centering
\[
D(G) = 
\left[
\begin{array}{ccc:c}
2 & 0 & 0 & {\text{\color{blue}B}}\\
1 & 2 & 0 & {\text{\color{red}R}}\\
1 & 1 & 0 & {\text{\color{blue}B}}\\
1 & 1 & 0 & {\text{\color{blue}B}}\\
1 & 2 & 0 & {\text{\color{red}R}}\\
\end{array}
\right]
\qquad
D(H) = 
\left[
\begin{array}{ccc:c}
2 & 0 & 0 & {\text{\color{blue}B}} \\
0 & 3 & 0 & {\text{\color{red}R}}\\
2 & 0 & 0 & {\text{\color{blue}B}}\\
2 & 0 & 0 & {\text{\color{blue}B}}\\
0 & 3 & 0 & {\text{\color{red}R}}\\
\end{array}
\right]
\qquad
D(G') = 
\left[
\begin{array}{ccc:c}
1 & 1 &0 & {\text{\color{blue}B}}\\
1 & 2 &0& {\text{\color{red}R}} \\
1 & 1 &0& {\text{\color{red}R}}\\
1 & 1 &0& {\text{\color{blue}B}}\\
1 & 2 &0& {\text{\color{blue}B}}\\
\end{array}
\right]=D(H')
\]

\caption{  
The color degree matrices for the graphs in Figure~\ref{house-fig} with color 1  (red, {\color{red}R}) and color 2 (blue, {\color{blue}B}).
}
    \label{fig:colordegreematrix}
\end{figure}

Recall that in defining the color degree matrix of a graph with a $k$-coloring, we  incorporate the  color of each vertex, using the color-identifier column, as well as the number of neighbors it has in each color class.   The following example shows why the color-identifier column is crucial.

\begin{exa}
\rm
Figure~\ref{fig-two-trees} shows the trees $T_1$ and $T_2$ with a $2$-coloring using $C_1$ (red) and $C_2$ (blue).  The ordered pair $(i,j)$ listed at each vertex indicates that the vertex has $i$ red neighbors and $j$ blue neighbors.  These two trees have the same multiset of ordered pairs, so there exist orderings of their vertices so that the first two columns of their color degree matrices are the same. However, $T_1$ has $3$ red and $6$ blue vertices, while $T_2$ has $4$ red and $5$ blue vertices, so the color-identifier columns  of their color degree matrices  will be different and there is no color $2$-switch that can transform $T_1$ to $T_2$.
\end{exa}

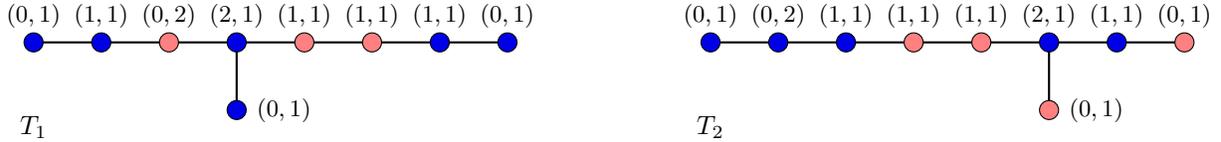
\begin{figure}[ht]
\centering
\begin{tikzpicture}[scale=.9]
\draw[thick] (0,1)--(7,1);
\draw[thick] (3,1)--(3,0);

\filldraw[blue!90!black, draw=black]
(0,1) circle [radius=4pt]
(1,1) circle [radius=4pt]
(3,1) circle [radius=4pt]
(6,1) circle [radius=4pt]
(7,1) circle [radius=4pt]
(3,0) circle [radius=4pt]
;
\filldraw[red!50, draw=black]
(2,1) circle [radius=4pt]
(4,1) circle [radius=4pt]
(5,1) circle [radius=4pt]
;

\draw[thick] (10,1)--(17,1);
\draw[thick] (15,1)--(15,0);
\filldraw[blue!90!black, draw=black]
(10,1) circle [radius=4pt]
(11,1) circle [radius=4pt]
(12,1) circle [radius=4pt]
(15,1) circle [radius=4pt]
(16,1) circle [radius=4pt]

;
\filldraw[red!50, draw=black]
(13,1) circle [radius=4pt]
(14,1) circle [radius=4pt]
(17,1) circle [radius=4pt]
(15,0) circle [radius=4pt]
;
\node(0) at (0,-.25) {$T_1$};
\node(0) at (10,-.25) {$T_2$};
\node(0) at (0,1.4) {\small $(0,1)$};
\node(0) at (1,1.4) {\small $(1,1)$};
\node(0) at (2,1.4) {\small $(0,2)$};
\node(0) at (3,1.4) {\small $(2,1)$};
\node(0) at (4,1.4) {\small $(1,1)$};
\node(0) at (5,1.4) {\small $(1,1)$};
\node(0) at (6,1.4) {\small $(1,1)$};
\node(0) at (7,1.4) {\small $(0,1)$};
\node(0) at (3.7,0) {\small $(0,1)$};

\node(0) at (10,1.4) {\small $(0,1)$};
\node(0) at (11,1.4) {\small $(0,2)$};
\node(0) at (12,1.4) {\small $(1,1)$};
\node(0) at (13,1.4) {\small $(1,1)$};
\node(0) at (14,1.4) {\small $(1,1)$};
\node(0) at (15,1.4) {\small $(2,1)$};
\node(0) at (16,1.4) {\small $(1,1)$};
\node(0) at (17,1.4) {\small $(0,1)$};
\node(0) at (15.7,0) {\small $(0,1)$};
\end{tikzpicture}
\caption{The trees $T_1$ and $T_2$ where  the   ordered pair $(i,j)$  next to a vertex $v$ indicates that $v$ has $i$ red neighbors and $j$ blue neighbors. }
\label{fig-two-trees}
\end{figure}

\begin{thm}
For $k$-colored graphs $G$ and $H$, the color degree matrices $D(G)$ and $D(H)$ are  equal if and only if there is a sequence of color 2-switches that takes $G$ to $H$.
\label{two-switch-thm-1}
\end{thm}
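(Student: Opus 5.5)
The backward direction is immediate from Remark~\ref{rem:color-d-m}. Each color $2$-switch preserves the color of every vertex and the number of neighbors of each color. So if a sequence of color $2$-switches takes $G$ to $H$, then $D(G)=D(H)$ (same vertex set and ordering). The substance is the forward direction. I would mimic the classical proof that graphs with the same degree sequence are connected by $2$-switches, and run an induction on the size of the symmetric difference $E(G)\triangle E(H)$.

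Assume $D(G)=D(H)$ and $G\neq H$. Color the edges of $E(G)\setminus E(H)$ ``$G$-edges'' and those of $E(H)\setminus E(G)$ ``$H$-edges''. The key counting fact is this: for each vertex $v$ and each color $j$, the number of $G$-edges from $v$ to $C_j$ equals the number of $H$-edges from $v$ to $C_j$. This holds because $C_j$-$\deg_G(v)=C_j$-$\deg_H(v)$, and the common edges cancel.

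Now pick a $G$-edge $ux$ with $u\in C_a$ and $x\in C_b$. By the counting fact at $x$, applied to color $a$, there is an $H$-edge $xw$ with $w\in C_a$, and $w\ne u$. By the counting fact at $w$, applied to color $b$, there is a $G$-edge $wy$ with $y\in C_b$, and $y\neq x$. The vertices $u,w$ have the same color, as do $x,y$, and we have $ux,wy\in E(G)$ and $wx\notin E(G)$.

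\emph{Case 1: $uy\notin E(G)$.} Perform the color $2$-switch replacing $ux,wy$ by $uy,wx$. The edge $wx$ lies in $E(H)$ and $ux$ does not, so the symmetric difference loses $ux$ and $wx$. It gains at most $uy$, while $wy$ leaves $E(G)$ and was not in $E(H)$. So $|E(G)\triangle E(H)|$ strictly decreases (net change at most $-2$), and the induction proceeds.

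\emph{Case 2: $uy\in E(G)$.} This is the main obstacle, and I would handle it with a standard alternating-closed-walk argument rather than a single switch. Consider the graph on $E(G)\triangle E(H)$, where each edge is labelled by its ordered color pair. At each vertex $v$ and for each color $j$, the $G$-edges and $H$-edges to $C_j$ are equinumerous. So there is a closed walk $v_0v_1\cdots v_{2m}=v_0$ alternating between $G$-edges and $H$-edges, with colors alternating as $c(v_{2i})=a$ and $c(v_{2i+1})=b$. To build it, repeatedly leave each vertex by an unused edge of the opposite type to the required color class; this never gets stuck, by the counting fact.

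Take a shortest such alternating closed walk and argue by induction on its length $2m$. If $m=2$, the walk gives a $4$-cycle $u x w y$ with $ux,wy$ as $G$-edges and $xw,yu$ as $H$-edges. Then $uy,wx\notin E(G)$, so a direct color $2$-switch applies and reduces the symmetric difference by $4$.

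For $m\ge3$, look at the chord $v_0v_3$, whose endpoints have colors $a$ and $b$.
\begin{itemize}
\item If $v_0v_3\notin E(G)$, switch $v_0v_1$ and $v_2v_3$ for $v_0v_3$ and $v_2v_1$. This is a color $2$-switch because $v_0,v_2\in C_a$ and $v_1,v_3\in C_b$. It turns $v_1v_2$ into a common edge and shortens the alternating walk.
\item If $v_0v_3\in E(G)$, use it in place of the path $v_0v_1v_2v_3$ to obtain a shorter alternating closed walk. This contradicts minimality, or else lets us recurse.
\end{itemize}
The delicate point is the bookkeeping that the symmetric difference does not grow in the first bullet. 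I expect checking this, together with the case where $v_0v_3\in E(H)$, to be the hardest step. I would handle it exactly as in the textbook proof of the uncolored $2$-switch theorem, with the only extra input being that every chord used joins a color-$a$ vertex to a color-$b$ vertex, so each switch performed is a color $2$-switch. Since every switch keeps $D$ fixed and strictly decreases a nonnegative quantity, the process terminates with $G=H$.
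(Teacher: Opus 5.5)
\textbf{There is a genuine gap, in the step you defer.} Your counting fact is correct. So is the existence of a closed walk that alternates between $G$-edges and $H$-edges within the edges joining $C_a$ and $C_b$. Reducing $|E(G)\triangle E(H)|$ is also a legitimate alternative to the paper's route.

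The problem is your second bullet, which treats $v_0v_3\in E(G)$ as a $G$-edge of the symmetric difference. If $v_0v_3\in E(G)\cap E(H)$, then it is not in $E(G)\triangle E(H)$. Splicing it in then gives no alternating walk, and the switch in your first bullet is also unavailable. Deferring to ``the textbook proof'' does not fill this, since the classical proof the paper follows (West's) deletes a vertex rather than working with alternating walks.

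The missing idea is to switch in $H$ instead. In $H$, the pairs $v_0v_3$ and $v_1v_2$ are edges while $v_0v_1$ and $v_2v_3$ are not. So replacing $v_0v_3, v_2v_1$ by $v_0v_1, v_2v_3$ is a color $2$-switch of $H$ (as $v_0,v_2\in C_a$ and $v_1,v_3\in C_b$), and it lowers $|E(G)\triangle E(H)|$ by $2$. Color $2$-switches are reversible, so a sequence $G\to H'$ followed by the reverse of this switch takes $G$ to $H$. Your induction must therefore allow switches on either side.

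\textbf{There is a second gap when $a=b$: the four vertices need not be distinct.} In your Case 1 you can get $y=u$. For instance, take $v,a,b,c,d$ of one color with $E(G)=\{va,vb,cd\}$ and $E(H)=\{ab,vc,vd\}$, any further vertices isolated; then $D(G)=D(H)$. Starting from $u=v$, $x=a$ forces $w=b$ and $y=v=u$, so your ``switch'' would create a loop. Likewise $v,a,b,v,c,d,v$ is a shortest alternating closed walk with $v_0=v_3$, so there is no chord and neither bullet applies.

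This is repairable. $v_0=v_3$ and $v_1=v_4$ cannot both hold, since then the edge $v_0v_1$ would be traversed once as a $G$-edge and once as an $H$-edge. So you can shift the window one step and run the mirror-image argument with $G$ and $H$ interchanged. For $a\neq b$ the problem never arises, because $v_0$ and $v_3$ lie in different classes.

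With both repairs your argument works. It also shows that the theorem splits into the classical $2$-switch theorem inside each $C_a$ and Ryser's interchange theorem for each pair $C_a, C_b$. The paper avoids this bookkeeping altogether: it fixes $v\in C_1$ and uses color $2$-switches in both $G$ and $H$ to make $N(v)\cap C_j$ the $C_j$-vertices of largest $C_1$-degree for every $j$. It then deletes $v$ and inducts on $|V(G)|$.
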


\begin{proof} The backwards direction follows from Remark~\ref{rem:color-d-m}.  
We will prove the forward direction by induction on $|V(G)|$.

Let $D(G)$ have $n$ rows and $k+1$ columns.  Let $G$ be a graph with $k$-coloring $C_1 \cup C_2\cup \cdots \cup C_k$ and let $D(G)$ be the associated color degree matrix. 
We will perform a sequence of color 2-switches on $G$.   
Select $v\in C_1$ with the largest $C_1$-degree in $G$. 
Order the vertices of $C_2$ by their $C_1$-degree in $G$, largest to smallest, as $w_1, w_2, \ldots, w_p$.  We begin by considering the neighbors of $v$ in $C_2$.
Let $q = C_2$-$\deg_G(v)$ and let $S_2 = \{w_1, w_2, \ldots, w_q\}$ be the $q$ vertices in $C_2$ with largest $C_1$-degree.  We will make color $2$-switches in $G$ so that in the resulting graph the set of neighbors of $v$ colored $C_2$ will be $S_2$.
If $N_G(v) \cap C_2 = S_2$, no color $2$-switches are necessary. 
Otherwise, there exist  $x,z\in C_2$ such that $x \in S_2$ with $x \not\in N_G(v) $ 
and $z \in N_G(v) $ 
with $z \not\in S_2$. By definition of $S_2$, we know $C_1$-$\deg_G(x) \ge C_1$-$\deg_G(z)$.  
Since $z\in N_G(v)$ and $x\not \in N_G(v)$, there exists a vertex $y\in C_1$ for which $y$ is adjacent to $x$ but not to $z$.

Perform a color $2$-switch by adding edges $vx$ and $zy$ and removing edges $vz$ and $xy$.  By Remark~\ref{rem:color-d-m}, the result is a graph that has the same color degree matrix as $G$, but for which we have increased the quantity $|(N_G(v) \cap C_2) \cap S_2|$. Repeat this process by induction until we get a graph with the same color degree matrix as $G$ and $N_G(v) \cap C_2 = S_2$.

Next, repeat this process for each of the  other color classes $C_3, C_4, \ldots, C_k$ so that for $j\ge 1$, the set $N_G(v) \cap C_j$  consists of  $C_j$-$\deg_G(v)$ vertices of $C_j$ with largest $C_1$-degree, and call this set $S_j$. 
When we make color $2$-switches for color $j$, they only involve  edges with one  endpoint in $C_1$   and the other in $C_j$, so these changes do not affect the previous color $2$-switches on  edges with one endpoint in $C_1$ and the other in  $C_i$  where $i<j$. Call the resulting graph $G_1$. 

Finally, we consider the neighbors of $v$ in $C_1$.
Order the vertices in $C_1$, other than $v$, 
as $v_1, v_2, \ldots, v_{\ell}$ by their $C_1$ degree, largest to smallest, so that $C_1$-$\deg_G(v_1) \ge C_1$-$\deg_G(v_2) \ge \cdots \ge C_1$-$\deg_G(v_{\ell})$.  Let  $m = |N_G(v) \cap C_1|$ and let $S_1 = \{v_1, v_2, 
\ldots, v_{m} \}$. Note that $|S_1| = C_1$-$\deg_G(v)$. As before, we can make a sequence of color $2$-switches, 
where all four vertices involved have color $C_1$, to transform $G_1$ to a graph $G^*$ that has the same color degree matrix as $G$ and for 
which $S_1 = N_G(v) \cap C_1$. Since these color $2$-switches involve adding and deleting edges that have both 
endpoints in $C_1$, the set of  neighbors of $v$ of colors other than $C_1$ remains the same. In $G^*$, we have 
$N_{G^\ast}(v) = S_1 \cup S_2 \cup \cdots \cup S_k$.

By hypothesis,   $D(H) = D(G)$, so there exists another sequence of color $2$-switches that transforms $H$ to $H^\ast$  where $N_{H^\ast}(v) = S_1 \cup S_2 \cup \cdots \cup S_k$.  Now, $G^{\ast} - v$ and $H^{\ast} - v$ have the same color degree matrix.  By our inductive hypothesis, there is a sequence of color $2$-switches that transforms $G^* -v$ to $H^* - v$.  These color $2$-switches do not involve $v$, hence the same sequence of color $2$-switches that transforms $G^*-v$ to $H^*-v$ transforms $G^*$ to $H^*$. Thus, we have a sequence of color 2-switches that transforms $G$ to $G^*$, and from $G^*$ to $H^*$, and the reverse of the sequence we constructed from $H$ to $H^*$ transforms $H^*$ to $H$. Hence, there is a sequence of color 2-switches that transforms $G$ to $H$.
\end{proof}

Theorem~\ref{two-switch-thm-1} is valuable in that
for two large $k$-colored graphs, it may be easier to determine whether    they have the same color degree matrix than it is to determine whether there is a sequence of color $2$-switches that transforms one to the other.  Our next two examples demonstrate this.

 \begin{exa} \rm
One can check that the $3$-colored graphs  $G$ and $G'$ in Figure~\ref{sample2switches}  have the same color degree matrix, therefore by Theorem~\ref{two-switch-thm-1}, there is a  sequence of  color $2$-switches that transforms one to the other.  The proof of Theorem~\ref{two-switch-thm-1} provides 
a method for constructing such a sequence of $2$-switches in general.  For the graphs in Figure~\ref{sample2switches} the following sequence achieves the transformation:

(i) replace $\{v_3v_{12}, v_6v_9\}$ with $\{v_3v_{9}, v_6v_{12}\}$,

(ii) replace $\{v_1v_{11}, v_8v_{10}\}$ with $\{v_1v_{10}, v_8v_{11}\}$, and 

(iii) replace $\{v_2v_{4}, v_5v_{7}\}$ with $\{v_2v_{5}, v_4v_{7}\}$.
\end{exa}

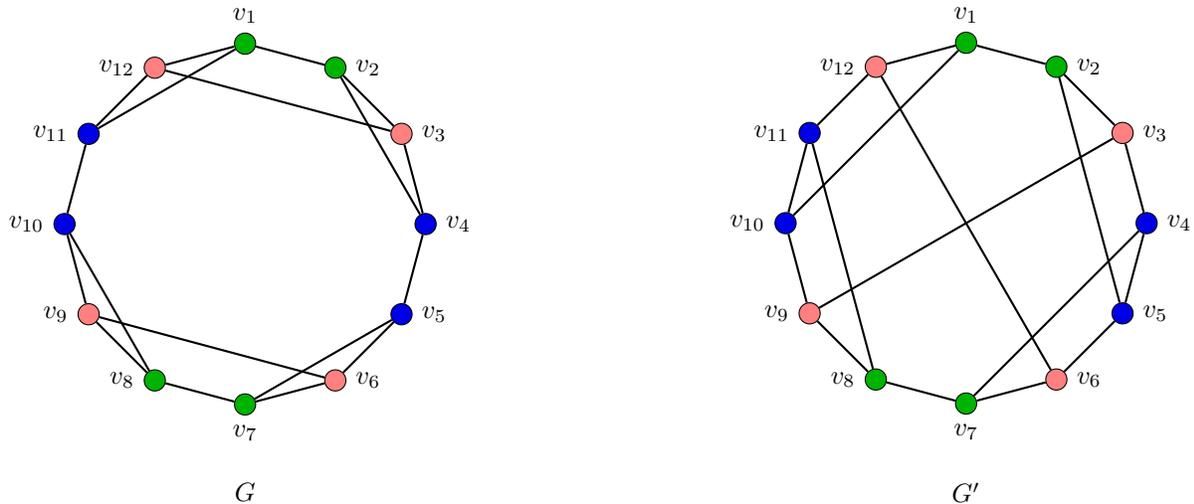
\begin{figure}
\begin{minipage}{.45\textwidth}
\begin{tikzpicture}[scale=.8]

\gvertex (1)  at ( 0, 3) [label=above: $v_1$]{};
\gvertex (2)  at ( 1.5, 2.6) [label=right: $v_2$] {};
\rvertex (3)  at ( 2.6, 1.5) [label=right: $v_3$] {};
\bvertex  (4)  at ( 3, 0) [label=right: $v_4$] {};
\bvertex (5) at ( 2.6,-1.5)[label=right: $v_5$] {};
\rvertex (6) at (1.5,-2.6) [label=right: $v_6$] {};
\gvertex (7) at ( 0,-3) [label=below: $v_7$]{};
\gvertex (8)  at (-1.5,-2.6) [label=left: $v_8$] {};
\rvertex (9)  at (-2.6,-1.5) [label=left: $v_9$] {};
\bvertex (10)  at (-3, 0) [label=left: $v_{10}$] {};
\bvertex (11)  at (-2.6, 1.5) [label=left: $v_{11}$] {};
\rvertex (12) at (-1.5, 2.6) [label=left: $v_{12}$] {};

\draw[-, thick] (1) to (2) to (3) to (4) to (5) to (6) to (7) to (8) to (9) to (10) to (11) to (12) to (1);

\draw[-, thick] (1) to (11);
\draw[-, thick] (2) to (4);
\draw[-, thick] (3) to (12);
\draw[-, thick] (5) to (7);
\draw[-, thick] (6) to (9);
\draw[-, thick] (8) to (10);

\node[draw=none, fill=none, label=below:$G$] (label) at (0,-4) {};
\end{tikzpicture}
\end{minipage}
\hspace{20mm}
\begin{minipage}{.45\textwidth}
\begin{tikzpicture}[scale=.8]

\gvertex (1)  at ( 0, 3) [label=above: $v_1$]{};
\gvertex (2)  at ( 1.5, 2.6) [label=right: $v_2$] {};
\rvertex (3)  at ( 2.6, 1.5) [label=right: $v_3$] {};
\bvertex  (4)  at ( 3, 0) [label=right: $v_4$] {};
\bvertex (5) at ( 2.6,-1.5)[label=right: $v_5$] {};
\rvertex (6) at (1.5,-2.6) [label=right: $v_6$] {};
\gvertex (7) at ( 0,-3) [label=below: $v_7$]{};
\gvertex (8)  at (-1.5,-2.6) [label=left: $v_8$] {};
\rvertex (9)  at (-2.6,-1.5) [label=left: $v_9$] {};
\bvertex (10)  at (-3, 0) [label=left: $v_{10}$] {};
\bvertex (11)  at (-2.6, 1.5) [label=left: $v_{11}$] {};
\rvertex (12) at (-1.5, 2.6) [label=left: $v_{12}$] {};

\draw[-, thick] (1) to (2) to (3) to (4) to (5) to (6) to (7) to (8) to (9) to (10) to (11) to (12) to (1);

\draw[-, thick] (1) to (10);
\draw[-, thick] (2) to (5);
\draw[-, thick] (3) to (9);
\draw[-, thick] (4) to (7);
\draw[-, thick] (6) to (12);
\draw[-, thick] (8) to (11);

\node[draw=none, fill=none, label=below:$G'$] (label) at (0,-4) {};
\end{tikzpicture}
\end{minipage}
\caption{Graphs $G$ and $G'$ and $3$-colorings of  them for which $D(G) = D(G')$.
}
\label{sample2switches}
\end{figure}

\begin{exa}
\rm

Figure~\ref{fig-OBS-no-color-2switch} shows $2$-colorings of the graphs  $ K_2 \Box C_5$ and $K_4+K_3\Box K_2$, which have many common features. Both graphs are regular of degree $3$,  the colorings shown each have 5 red and 5 blue vertices, and the open neighborhood of each vertex contains one vertex of one color and two vertices of the opposite color.  Their color degree matrices are shown in Figure~\ref{fig:matrix2} and  the first two columns of each of these matrices have five entries of $2,1$ and five entries of $1,2$.    
However, the color degree matrices are not equal because in  $D( K_2 \Box C_5)$ there are four $[2,1, {\text{\color{blue}B}]}$ rows and one $[2,1, {\text{\color{red}R} }]$ row while in     $D(K_4+K_3\Box K_2)$ there are two of the former and three of the latter.  Therefore, by Theorem~\ref{two-switch-thm-1}, there is no sequence of color $2$-switches that transforms our coloring of  $ K_2 \Box C_5$ to our coloring of  $K_4+K_3\Box K_2$.

\end{exa}

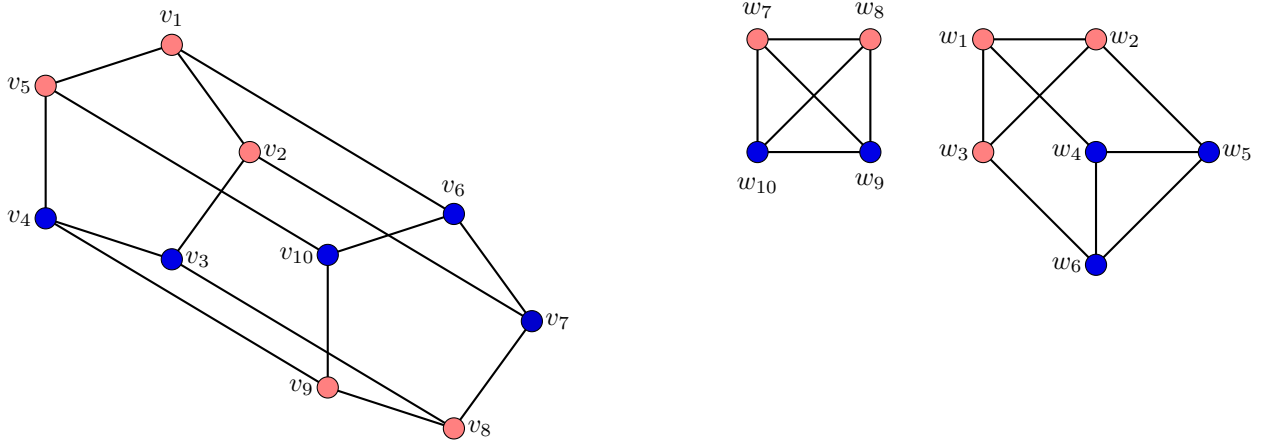
\begin{figure}[ht]
\begin{tikzpicture}[scale=.75, every node/.style={circle, draw, minimum size=8pt, inner sep=0pt}]

    \def\r{2}
    \foreach \i in {1,...,5} {
        \coordinate (v\i) at ({\r*cos(72*\i)}, {\r*sin(72*\i)});
    }
    \def\xshift{5}
    \def\yshift{-3}
    \foreach \i in {1,...,5} {
        \coordinate (w\i) at ({\xshift + \r*cos(72*\i)}, {\yshift + \r*sin(72*\i)});
    }
    \draw[thick] (v1)--(v2)--(v3)--(v4)--(v5)--(v1);
    \draw[thick] (w1)--(w2)--(w3)--(w4)--(w5)--(w1);
    \draw[thick] (v1)--(w1);
    \draw[thick] (v2)--(w2);
    \draw[thick] (v3)--(w3);
    \draw[thick] (v4)--(w4);
    \draw[thick] (v5)--(w5);

    \node[fill=red!50, label=above:$v_1$] at (v1) {};
    \node[fill=red!50, label=left:$v_5$] at (v2) {};
    \node[fill=blue!90!black, label=left:$v_4$] at (v3) {};
    \node[fill=blue!90!black, label=right:$v_3$] at (v4) {};
    \node[fill=red!50, label=right:$v_2$] at (v5) {};
   
    \node[fill=blue!90!black, label=above:$v_6$] at (w1) {};
    \node[fill=blue!90!black, label=left:$v_{10}$] at (w2) {};
    \node[fill=red!50, label=left:$v_9$] at (w3) {};
    \node[fill=red!50, label=right:$v_8$] at (w4) {};
    \node[fill=blue!80!black, label=right:$v_7$] at (w5) {};

\begin{scope}[xshift=11cm]
    \coordinate (a1) at (0,2);
    \coordinate (a2) at (2,2);
    \coordinate (a3) at (2,0);
    \coordinate (a4) at (0,0);

    \draw[thick] (a1)--(a2)--(a3)--(a4)--(a1);
    \draw[thick] (a1)--(a3);
    \draw[thick] (a2)--(a4);

   \node[fill=red!50, label=above:$w_7$] at (a1) {};
    \node[fill=red!50, label=above:$w_8$] at (a2) {};
    \node[fill=blue!90!black, label=below:$w_9$] at (a3) {};
    \node[fill=blue!90!black, label=below:$w_{10}$] at (a4) {};
\end{scope}

\begin{scope}[xshift=15cm]
    \coordinate (b1) at (0,2);
    \coordinate (b2) at (2,2);
    \coordinate (b3) at (0,0);
    \coordinate (b4) at (2,0);
    \coordinate (b5) at (2,-2);
    \coordinate (b6) at (4,0);

    \draw[thick] (b1)--(b2)--(b3)--(b1);
    \draw[thick] (b4)--(b5)--(b6)--(b4);
    \draw[thick] (b1)--(b4);
    \draw[thick] (b3)--(b5);
    \draw[thick] (b2)--(b6);
    
    \node[fill=red!50, label=left:$w_1$] at (b1) {};
    \node[fill=red!50, label=right:$w_2$] at (b2) {};
    \node[fill=red!50, label=left:$w_3$] at (b3) {};
    \node[fill=blue!90!black, label=left:$w_4$] at (b4) {};
    \node[fill=blue!90!black, label=left:$w_6$] at (b5) {};
    \node[fill=blue!90!black, label=right:$w_5$] at (b6) {};
\end{scope}

\end{tikzpicture}
\caption{The graphs $K_2 \Box C_5$ and $K_4+K_3\Box K_2$ with $2$-colorings and no possible transformation via color 2-switches. }
\label{fig-OBS-no-color-2switch}
\end{figure}

\begin{figure}[ht]
    \centering
\[
D(K_2\Box C_5) = 
\left[
\begin{array}{cc:c}
2 & 1 & {\text{\color{red}R}} \\
1 & 2 & {\text{\color{red}R}} \\
2 & 1 & {\text{\color{blue}B}} \\
2 & 1 & {\text{\color{blue}B}}\\
1 & 2 & {\text{\color{red}R}} \\
1 & 2 & {\text{\color{blue}B}}\\
2 & 1 & {\text{\color{blue}B}} \\
1 & 2 & {\text{\color{red}R}} \\
1 & 2 & {\text{\color{red}R}}\\
2 & 1 & {\text{\color{blue}B}}  \\
\end{array}
\right]
\qquad
D(K_4+K_3\Box K_2) = 
\left[
\begin{array}{cc:c}
2 & 1 & {\text{\color{red}R}}\\
2 & 1 & {\text{\color{red}R}}\\
2 & 1 & {\text{\color{red}R}}\\
1 & 2 & {\text{\color{blue}B}}\\
1 & 2 & {\text{\color{blue}B}}\\
1 & 2 & {\text{\color{blue}B}}\\
1 & 2 & {\text{\color{red}R}}\\
1 & 2 & {\text{\color{red}R}}\\
2 & 1 & {\text{\color{blue}B}}\\
2 & 1 & {\text{\color{blue}B}}\\
\end{array}
\right]
\]
\caption{Color degree matrices for Figure \ref{fig-OBS-no-color-2switch}.}
\label{fig:matrix2}
\end{figure}

\section {Three classes of $k$-colored and $\lambda$-balanced graphs}
\label{sec-three-classes}

In this section, we introduce three versions of $\lambda$-balance for $k$-colored graphs, discuss fundamental properties, and provide illustrative examples.

For a graph with $k$-coloring $C_1 \cup C_2 \cup \cdots C_k$, we defined the  $C_j$-degree of a vertex in Section~\ref{sec-intro}.  We now define the \emph{color $j$ closed  degree} of vertex $x$ to be $|N[x] \cap C_j|$ and denote it by $C_j$-$\deg[x]$.  In our next definitions, we relax the  notion of balanced, so that the number of vertices of each color in a neighborhood need not be exactly equal.
Let $G$ be a graph and fix a $k$-coloring $C_1 \cup C_2 \cup \cdots \cup C_k$ of $G$. For vertex $v$, we say the coloring is \emph{$\lambda$-balanced} at $N(v)$ if for all $i,j \in \{1,2, \ldots, k\}$, we have $|C_i$-$\deg(v) - C_j$-$\deg(v)| \le \lambda$. Likewise, it is \emph{$\lambda$-balanced} at $N[v]$ if $|C_i$-$\deg[v] - C_j$-$\deg[v]| \le \lambda$ for all $i,j \in \{1, 2, \ldots, k\}$. The $2$-colored graphs in Figure~\ref{fig-OBS-no-color-2switch} are $1$-balanced at $N(v)$ for each vertex $v$ and this can be seen easily from the color degree matrices in Figure~\ref{fig:matrix2}.  
The $3$-colored graphs in Figure~\ref{sample2switches} are $0$-balanced at $N(v)$ for each vertex $v$ since the neighborhood of each vertex consists of one vertex of each color.

 Our first graph class  is a generalization of NBC graphs and requires that the open neighborhoods of all vertices are $\lambda$-balanced. Indeed, the class of NBC graphs results when $k=2$ and $\lambda = 0$.

\begin{defn}
\label{open-bal-def}
\rm A $k$-coloring of graph $G$ that is $\lambda$-balanced at $N(v)$ for every $v \in V(G)$ is  called a  \emph {$(k,\lambda)$-balanced}  coloring and a graph with such a coloring is  a   \emph{$(k,\lambda)$-balanced}
graph.
 
\end{defn}

An analogous definition for closed neighborhoods generalizes CNBC-graphs and the class of CNBC graphs results when $k=2$ and $\lambda = 0$.

\begin{defn}
\rm 
A $k$-coloring of graph $G$ that is $\lambda$-balanced at $N[v]$ for every $v \in V(G)$ is  called a  \emph {$[k,\lambda]$-balanced}  coloring and a graph with such a coloring is  a   \emph{$[k,\lambda]$-balanced}
graph. 
\label{closed-bal-def}
\end{defn}

The house graph in Figure~\ref{house-fig} displays a $[2,1]$-balanced coloring of $G$ and a $(2,1)$-balanced coloring of $G'$.   Note that the $2$-coloring shown for $G$ is not $(2,1)$-balanced since it is not $1$-balanced at $N(v_1)$. And, the $2$-coloring shown for $G'$ is not $[2,1]$-balanced since it is not $1$-balanced at $N[v_5]$. The $3$-colorings shown for  the graphs in Figure~\ref{sample2switches}  are $(3,0)$-balanced and $[3,1]$-balanced  while the $2$-colorings shown for the graphs in Figure~\ref{fig-OBS-no-color-2switch} are $(2,1)$-balanced and $[2,2]$-balanced.

Our third class of locally balanced graphs is the most flexible, where at each vertex it suffices for $\lambda$-balance to occur at either the open or the closed neighborhood. 

 \begin{defn}\label{bal-def}
\rm 
 A graph $G$ is $([k,\lambda])$-\emph{balanced} or \emph{locally balanced} for parameters $k$ and $\lambda$, if there exists a $k$-coloring of $G$  that is either $\lambda$-balanced at $N(v)$ or $\lambda$-balanced at $N[v]$ for every $v \in V(G)$.  Such a coloring is a \emph{$([k,\lambda])$-balanced} coloring.

 \end{defn}

Any graph that is $(k,\lambda)$-balanced or  $[k,\lambda]$-balanced is also  $([k,\lambda])$-balanced, and we record this and similar consequences of the definitions  of these classes in the following lemma.

\begin{lem} 
The following hold for integers $k \ge 2$ and $\lambda \ge 0$.

\begin{enumerate}[label=(\roman*), font=\normalfont]
\item \label{lem:fundamentals-lemma01} Any graph with maximum degree $\Delta$ is $(k,\Delta)$-balanced and $[k,\Delta +1]$-balanced.

\item \label{lem:fundamentals-lemma02} Any graph that is $(k,\lambda)$-balanced or  $[k,\lambda]$-balanced is also  $([k,\lambda])$-balanced.  

\item \label{lem:fundamentals-lemma03} A graph is  $(k,\lambda)$-balanced if and only if each of its components is  $(k,\lambda)$-balanced.  Similar statements hold for  $[k,\lambda]$-balanced and $([k,\lambda])$-balanced. 

\item \label{lem:fundamentals-lemma04}If $G$ is $(k,\lambda)$-balanced then it is also $[k,\lambda + 1]$-balanced.  If $G$ is $[k,\lambda]$-balanced then it is also $(k,\lambda + 1)$-balanced. 
  If $G$ is $([k,\lambda])$-balanced then it is also $(k,\lambda + 1)$-balanced and $[k,\lambda +1]$-balanced. 

\item \label{lem:fundamentals-lemma05} If $c$ is a $(k,\lambda)$-balanced coloring of $G$, and $G'$ is obtained by a sequence of color $2$-switches, then $c$ is also a $(k,\lambda)$-balanced coloring of $G'$. Similar statements hold if $c$ is a $[k,\lambda]$-balanced coloring of $G$ or a $([k,\lambda])$-balanced coloring of $G$.  

\end{enumerate}

    \label{fundamentals-lemma}
\end{lem}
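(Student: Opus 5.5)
The plan is to verify each part directly from the definitions. The one idea used throughout is that for any vertex $v$ and colors $i,j$ we have $0 \le C_i$-$\deg(v) \le \deg(v) \le \Delta$. Moreover, $C_i$-$\deg[v]$ equals $C_i$-$\deg(v)$ plus $1$ exactly when $i = c(v)$, and equals $C_i$-$\deg(v)$ otherwise.

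For \ref{lem:fundamentals-lemma01}, I take an arbitrary $k$-coloring, say every vertex colored $1$. Two numbers in $[0,\Delta]$ differ by at most $\Delta$, which gives $(k,\Delta)$-balance. Two numbers in $[0,\Delta+1]$ differ by at most $\Delta+1$, which gives $[k,\Delta+1]$-balance.

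Part \ref{lem:fundamentals-lemma02} is immediate from Definition~\ref{bal-def}, since the per-vertex disjunction is satisfied by one of its disjuncts at every vertex.

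For \ref{lem:fundamentals-lemma03}, I use that $N(v)$ and $N[v]$ lie inside the component of $v$. A coloring of $G$ therefore restricts to a coloring of each component with identical neighborhood color counts. Conversely, the union of colorings of the components is a coloring of $G$ with the same counts at every vertex. The argument is the same for all three classes.

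For \ref{lem:fundamentals-lemma04}, I use the same coloring. Passing between $N(v)$ and $N[v]$ changes exactly one of the $k$ counts, namely the one for $c(v)$, and changes it by $1$. Hence each pairwise difference $|C_i$-$\deg - C_j$-$\deg|$ changes by at most $1$. This gives the first two statements at once. For the third, at each vertex the coloring is $\lambda$-balanced at one of $N(v)$, $N[v]$. It is then $(\lambda+1)$-balanced at the other, and trivially $(\lambda+1)$-balanced at the same one. So it is $(\lambda+1)$-balanced at both.

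For \ref{lem:fundamentals-lemma05}, I invoke Remark~\ref{rem:color-d-m}. A color $2$-switch preserves vertex colors and each $C_j$-$\deg(v)$, hence each $C_j$-$\deg[v]$. By induction on the length of the sequence, all balance conditions at each vertex are preserved, and the same holds for every class.

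None of these steps is a real obstacle. The only point needing care is the bookkeeping in \ref{lem:fundamentals-lemma04}, namely checking that adding $v$ itself shifts exactly one color count, so differences move by at most one.
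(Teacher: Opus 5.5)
Your proposal is correct and follows the same route as the paper. The paper simply states that parts (i)--(iv) are direct consequences of the definitions of the three balance classes and of open and closed neighborhoods, and that part (v) additionally uses Remark~\ref{rem:color-d-m}; you have filled in the routine checks the paper leaves to the reader, such as the fact that passing from $N(v)$ to $N[v]$ shifts only the count for $c(v)$, and only by one.
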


\begin{proof}
    The  first four results  are direct consequences of the Definitions~\ref{open-bal-def}, \ref{closed-bal-def}, and \ref{bal-def}  and the definitions of open and closed neighborhoods.
The last result also uses Remark~\ref{rem:color-d-m}.
\end{proof}

In contrast to Lemma~\ref{fundamentals-lemma}\ref{lem:fundamentals-lemma02}, we show in 
 Proposition~\ref{wheel-prop} that the family of wheel graphs of the form $W_{4n+6}$ for $n \ge 1$ are $([2,0])$-balanced but neither $(2,0)$-balanced nor $[2,0]$-balanced. 

Lemma~\ref{fundamentals-lemma}\ref{lem:fundamentals-lemma05} 
 provides a method for converting a known $(k,\lambda)$-balanced coloring of a  graph to a $(k,\lambda)$-balanced coloring of related graphs.  For example, Figure~\ref{sample2switches}  shows a $(3,0)$-balanced coloring of graph $G$ which is also a a $(3,0)$-balanced coloring of the graph $G'$, obtained by a sequence color $2$-switches.
Lemma~\ref{fundamentals-lemma}\ref{lem:fundamentals-lemma05} can also be used to show that a sequence of color $2$-switches does not exist, as in the following example.

\begin{exa} \rm 
The generalized Petersen graph $GP(n,d)$ is defined in \cite{W69} for $n,d$ positive integers and $1 \le d \le \frac{n-1}{2}$.  It consists of $2n$ vertices, each of degree $3$, where $G(5,2)$ is the usual Petersen graph.  In \cite{Coetal}, it is shown that  for $d_1$ odd and $d_2$ even, $G(2m,d_1)$ is a CNBC graph  (i.e., $[2,0]$-balanced) while $G(2m,d_2)$  is not. Thus, for any $[2,0]$-coloring of $G(2m,d_1)$, there is no sequence of color 2-switches that transforms it to a $2$-coloring of $G(2m,d_2)$.
    \label{GP-example}
\end{exa}

Parts \ref{lem:fundamentals-lemma03} and \ref{lem:fundamentals-lemma05}  of Lemma~\ref{fundamentals-lemma}  also 
suggest a method of creating examples that satisfy certain neighborhood coloring properties but not others.  For example, the cycle $C_4$ is $(2,0)$-balanced, and $[2,1]$-balanced, but not $[2,0]$-balanced, while 
the complete graph  $K_4$ is $[2,0]$-balanced,  and $(2,1)$-balanced, but not $(2,0)$-balanced.
By Lemma~\ref{fundamentals-lemma}\ref{lem:fundamentals-lemma03},  the disjoint union $C_4 + K_4$ is neither $(2,0)$-balanced nor $[2,0]$-balanced, but it is $(2,1)$-balanced and $[2,1]$-balanced.
If we seek a connected example of a graph with these properties, we can fix a $2$-coloring of $C_4 + K_4$  that is both $(2,1)$-balanced  and $[2,1]$-balanced, and perform a color $2$-switch where two vertices come from $C_4$ and the  other two from $K_4$. The next result generalizes this example.

\begin{prop} \label{prop:venn} 
Let $S$ be a set of any balanced neighborhood coloring properties, such as $(k,\lambda)$-balanced or $[k,\lambda]$-balanced  for some $k$ and $\lambda$. 
Let $T_1, T_2 \subset S$  and suppose $G_i$ is a  connected graph that satisfies  all of the properties in $T_i$ and none of the properties in $S-T_i$ for $i = 1,2$.  Then the disjoint union $G_1 + G_2$ satisfies all the properties in $T_1 \cap T_2$ and none of the properties in $S-(T_1\cap T_2)$.  Moreover, a connected example can be constructed by fixing $k$-colorings of $G_i$ satisfying  all the properties in $T_i$ for $i = 1,2$, and performing a color $2$-switch that includes two vertices from $G_1$ and two vertices from $G_2$.

\end{prop}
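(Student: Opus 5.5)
The plan has two parts: the disjoint union, which follows from Lemma~\ref{fundamentals-lemma}\ref{lem:fundamentals-lemma03}, and the connected example, which follows from Lemma~\ref{fundamentals-lemma}\ref{lem:fundamentals-lemma05} together with a careful choice of the switch.

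\emph{Disjoint union.} The properties in $S$ are of the form ``$G$ admits a coloring which is $\lambda$-balanced at each vertex neighborhood'' in one of the three senses, and each such property is local. So by Lemma~\ref{fundamentals-lemma}\ref{lem:fundamentals-lemma03}, $G_1+G_2$ has a property $P\in S$ if and only if both $G_1$ and $G_2$ have $P$, that is, if and only if $P\in T_1\cap T_2$. This handles both inclusion and exclusion. The properties in $T_1\cap T_2$ are witnessed by a single coloring of $G_1+G_2$: for each such property, combine the witnessing colorings of $G_1$ and $G_2$. If one needs a single coloring of $G_i$ satisfying all properties in $T_i$ simultaneously, as the ``moreover'' clause assumes, that is exactly the hypothesis as phrased.

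\emph{Connected example.} Fix colorings $c_i$ of $G_i$ satisfying all of $T_i$ (as the statement assumes), and let $c$ be their union on $G_1+G_2$. Choose an edge $ux\in E(G_1)$ and an edge $wy\in E(G_2)$ with $c(u)=c(w)$ and $c(x)=c(y)$. Since $u,x\in V(G_1)$ and $w,y\in V(G_2)$, the non-edges $uy$ and $wx$ hold automatically, so replacing $ux,wy$ by $uy,wx$ is a color $2$-switch. Call the result $G'$.

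\emph{$G'$ is connected.} Each $G_i$ is connected. If neither $ux$ nor $wy$ is a bridge, then $G_1-ux$ and $G_2-wy$ are connected and the new edges join them. If $ux$ is a bridge, $G_1-ux$ splits into a component containing $u$ and one containing $x$; the edge $uy$ joins the $u$-part to $G_2$'s side and $wx$ joins the $x$-part. Checking the cases shows all pieces are linked through $uy$ and $wx$, since $u$ and $w$ lie in different pieces, as do $x$ and $y$. If both edges are bridges, the pieces are $A_u,A_x,B_w,B_y$, and the new edges give $A_u$--$B_y$ and $A_x$--$B_w$, which leaves \emph{two} components. So I must pick at least one non-bridge edge, or handle this degenerate case separately; this is the main obstacle. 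If, say, $G_1$ is a tree and $G_2$ is a tree, I will try to choose the edges so the switch still connects, or perform a second color $2$-switch across the two resulting components. That is the fallback I would try first.

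\emph{$G'$ has the right properties.} By Lemma~\ref{fundamentals-lemma}\ref{lem:fundamentals-lemma05}, $c$ remains a witnessing coloring on $G'$ for every property in $T_1\cap T_2$, since color degree matrices are preserved (Remark~\ref{rem:color-d-m}). For exclusion of $P\notin T_1\cap T_2$, the argument on $G_1+G_2$ was about \emph{all} colorings, and color $2$-switches only transport one fixed coloring. So I will argue that $G'$ fails $P$ for the chosen switch, or else accept the weaker reading intended by the paper's $C_4+K_4$ example. This second subtlety, not preserving failure of properties for all colorings, is where I expect the argument to need the most care. I would first try to verify it concretely in the intended applications.

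\emph{Existence of the needed edges.} The switch requires a color-compatible pair of edges. By permuting colors in $c_2$, which preserves every balance property, I can make some edge of $G_2$ have the same color pair as a chosen edge $ux$ of $G_1$, provided the edges are either both monochromatic or both bichromatic. Ensuring such a match exists is a minor point I would handle by inspecting the colorings.
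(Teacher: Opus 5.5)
Your disjoint-union argument is the paper's: Lemma~\ref{fundamentals-lemma}\ref{lem:fundamentals-lemma03}, applied to the components $G_1$ and $G_2$, gives both the inclusion of $T_1\cap T_2$ and the exclusion of $S-(T_1\cap T_2)$. That part is complete. For the ``moreover'' clause, the paper's proof only cites Lemma~\ref{fundamentals-lemma}\ref{lem:fundamentals-lemma05}. That yields exactly what you also obtained: the fixed coloring still witnesses every property in $T_1\cap T_2$ on the switched graph $G'$. The two points you leave open are precisely what that citation does not cover. Neither can be closed in the generality stated, so no cleverer choice of switch will finish your proof.

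\emph{Connectivity.} Your bridge analysis is right, but the fallback for trees cannot work. Color $2$-switches preserve the number of edges. If $G_1$ and $G_2$ are both trees (e.g.\ paths, which are among the paper's separating examples), then $G_1+G_2$ has $|V|-2$ edges, so no sequence of color $2$-switches makes it connected.

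\emph{Exclusion.} Lemma~\ref{fundamentals-lemma}\ref{lem:fundamentals-lemma05} transports only one coloring, and a switch can create new good colorings (compare Example~\ref{ex-balance}). Concretely, take $S=\{(2,0)\text{-balanced},\,[2,1]\text{-balanced}\}$, $G_1=G_2=C_6$, and $T_1=T_2=\{[2,1]\text{-balanced}\}$. This is a legitimate instance by Proposition~\ref{cycle-prop}, since $C_6$ is $[2,1]$-balanced but not $(2,0)$-balanced. Every color $2$-switch using one edge from each copy turns $C_6+C_6$ into $C_{12}$. But $C_{12}$ is $(2,0)$-balanced because $4\mid 12$. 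So the clause is false as literally stated.

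What is actually true, and what the paper's $C_4+K_4$ illustration really uses, is the following. Suppose at least one $G_i$ contains a cycle, and the switch uses a cycle edge of it together with a color-compatible edge of the other graph; your permutation of the colors of $c_2$ arranges compatibility when possible. Then $G'$ is connected and satisfies all of $T_1\cap T_2$. The exclusions must then be verified for $G'$ directly. In the $C_4+K_4$ case this works for a separate reason. $G'$ has vertices of degree $2$ and $3$, and degrees are preserved by $2$-switches. Observation~\ref{zero-bal-obs} then rules out both $(2,0)$- and $[2,0]$-balance. So your ``weaker reading'' is not what the paper intends. The correct repair is to add hypotheses (a non-tree $G_i$, and a separate check of the excluded properties), not to weaken the conclusion.
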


\begin{proof}
The proof follows from  Lemma~\ref{fundamentals-lemma}\ref{lem:fundamentals-lemma03} and Lemma~\ref{fundamentals-lemma}\ref{lem:fundamentals-lemma05}.
\end{proof}

\section{Balance number}
\label{sec-bal-number}

As discussed in the introduction, it is desirable to construct $k$-colorings of graphs so that the multiset of colors in any vertex neighborhood is balanced or close to balanced. For a graph $G$ and a fixed $k$, we seek to \emph{minimize} the value of $\lambda$ for which there is a $k$-coloring of $G$ that is $\lambda$-balanced.  This motivates  our next definition.

\begin{defn}
\label{bal-no-def}
{\rm  For a fixed positive integer $k$, the least $\lambda$ for which graph $G$ is $(k, \lambda)$-balanced is called the \emph{open $k$-balance number} of $G$ and denoted by $\beta_k(G)$. 
Similarly, the  least $\lambda$ for which graph $G$ is $[k, \lambda]$-balanced is called the \emph{closed $k$-balance number} of $G$ and denoted by $\beta_k[G]$, and the least $\lambda$ for which graph $G$ is $([k, \lambda])$-balanced is called the \emph{local $k$-balance number} of $G$ and denoted by $\beta_k([G])$.
}

\end{defn}

The next lemma translates  some of the results from Lemma~\ref{fundamentals-lemma}
using the terminology in Definition~\ref{bal-no-def}.
Note that the second inequality in Lemma~\ref{balance-lemma}\ref{bal-lem-pt1} is stronger than  its counterpart in Lemma~\ref{fundamentals-lemma}.  In Proposition~\ref{tight-prop}
we  show that each of the inequalities  in Lemma~\ref{balance-lemma}
is tight. 

\begin{lem} Let $G$ be a graph and $k\geq 2$. 
\begin{enumerate}[label=(\roman*), font=\normalfont]
\item \label{bal-lem-pt1}
If $G$ has maximum degree $\Delta$, then $\beta_k(G) \le \Delta$ and $\beta_k[G] \le \Delta $. 

\item \label{bal-lem-pt2}  \  $0 \le  \beta_k[G] - \beta_k([G]) \le 1$ and 
$0 \le  \beta_k(G) - \beta_k([G]) \le 1$

\item  \label{bal-lem-pt3} 
If  $G$ consists of the components $G_1, G_2, \ldots, G_t$, then $\beta_k(G) = \max_j\{\beta_k(G_j)\}$.    Similar statements hold for $\beta_k[G]$ and $\beta_k([G]).$ 

\item  \label{bal-lem-pt4} 
$-1 \le \beta_k[G] - \beta_k(G) \leq 1$

\item \label{bal-lem-pt5}
If $\beta_k(G) = \lambda$ and $c$ is a $(k,\lambda)$-balanced coloring of $G$, and $G'$ is obtained by a sequence of color $2$-switches, then 
 $\beta_k(G')\le \lambda$. Similar statements hold for $\beta_k[G]$ and $\beta_k([G])$.
\end{enumerate}

\label{balance-lemma}
    \end{lem}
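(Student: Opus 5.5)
Most parts translate Lemma~\ref{fundamentals-lemma} into balance numbers; the only genuinely new ingredient is the sharper bound $\beta_k[G]\le\Delta$ in part~\ref{bal-lem-pt1}. I will treat each part in turn.

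For \ref{bal-lem-pt1}, the bound $\beta_k(G)\le\Delta$ is immediate from Lemma~\ref{fundamentals-lemma}\ref{lem:fundamentals-lemma01}: any $k$-coloring has $0\le C_j$-$\deg(v)\le \deg(v)\le\Delta$, so every pairwise difference is at most $\Delta$. For the closed bound, a difference $|C_i$-$\deg[v]-C_j$-$\deg[v]|$ can only exceed $\Delta$ if one class contains all of $N[v]$, i.e.\ $\deg(v)=\Delta$ and $N[v]$ is monochromatic. I will therefore use a coloring in which no closed neighborhood is monochromatic. The simplest choice is to color each vertex $1$ or $2$ so that every non-isolated vertex has a neighbor of a different color; for instance, 2-color a spanning forest properly. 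Then for non-isolated $v$, every class meets $N[v]$ in at most $\deg(v)\le\Delta$ vertices, so every difference is at most $\Delta$. An isolated vertex has difference $1$ at $N[v]$, but then $\Delta\ge1$ unless $G$ is edgeless. The edgeless case is a degenerate edge case: there $\beta_k[G]=1$, so I will either exclude it or note it separately. This isolated-vertex and edgeless boundary is the one step needing care.

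For \ref{bal-lem-pt2}, I will take an optimal $(k,\beta_k(G))$-balanced coloring. By Lemma~\ref{fundamentals-lemma}\ref{lem:fundamentals-lemma02} it is $([k,\beta_k(G)])$-balanced, so $\beta_k([G])\le\beta_k(G)$. In the other direction, an optimal $([k,\lambda])$-balanced coloring with $\lambda=\beta_k([G])$ is $(k,\lambda+1)$-balanced by Lemma~\ref{fundamentals-lemma}\ref{lem:fundamentals-lemma04}, since adding or removing $v$ changes one color count by one. The closed case is symmetric. Part \ref{bal-lem-pt4} follows the same way from the first two statements of Lemma~\ref{fundamentals-lemma}\ref{lem:fundamentals-lemma04}.

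For \ref{bal-lem-pt3}, I will use the fact that balance conditions only involve neighborhoods inside a component. Colorings of the components combine to a coloring of $G$ whose worst imbalance is the maximum over the components. Taking optimal colorings on each component gives $\le$. Restricting any coloring of $G$ to a component gives $\ge$, which is essentially Lemma~\ref{fundamentals-lemma}\ref{lem:fundamentals-lemma03} applied with $\lambda$ equal to the maximum.

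Part \ref{bal-lem-pt5} is immediate from Lemma~\ref{fundamentals-lemma}\ref{lem:fundamentals-lemma05}, since $c$ remains a $(k,\lambda)$-balanced coloring of $G'$; the same holds in the closed and local versions.
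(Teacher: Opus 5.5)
Your proposal is correct. Parts (ii)--(v) go exactly as in the paper: you translate Lemma~\ref{fundamentals-lemma} through the definition of the balance numbers, using that $\lambda$-balance is monotone in $\lambda$.

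The genuine difference is in part~(i). The paper argues by contradiction. It assumes $\beta_k[G]=\Delta+1$, so that every $k$-coloring has an \emph{extreme} vertex (one whose closed neighborhood is monochromatic). It then takes a coloring with the fewest extreme vertices and recolors one of them to lower the count. You instead exhibit an explicit coloring: a proper $2$-coloring of a spanning forest, using only two of the $k$ colors. In it, no non-isolated vertex has a monochromatic closed neighborhood, so every class meets $N[v]$ in at most $|N[v]|-1=\deg(v)\le\Delta$ vertices.

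Your route is shorter and constructive, and it makes the role of isolated vertices visible. Your caveat is a real correction to the statement. For an edgeless graph such as $K_1$ or $\overline{K_n}$, we have $\Delta=0$ but $\beta_k[G]=1$, so the bound $\beta_k[G]\le\Delta$ requires $\Delta\ge 1$.

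The paper's argument silently needs the same hypothesis. Under its definition every isolated vertex is extreme, and recoloring one removes nothing. The local-improvement step only works if the chosen extreme vertex has degree $\Delta\ge 1$; such a vertex exists precisely when the coloring fails to be $[k,\Delta]$-balanced and $\Delta\ge1$. The paper's recoloring argument is a purely local repair that needs no global structure, whereas yours relies on a spanning forest. In exchange, yours is explicit and handles the boundary cases cleanly.
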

    
 \begin{proof}
Parts \ref{bal-lem-pt2} - \ref{bal-lem-pt5} of Lemma~\ref{balance-lemma} follow from Lemma~\ref{fundamentals-lemma} together with  Definition~\ref{bal-no-def}.
 It remains to prove part \ref{bal-lem-pt1}.

 The inequalities $\beta_k(G) \le \Delta$ and $\beta_k[G] \le \Delta + 1  $ follow from Lemma~\ref{balance-lemma}\ref{bal-lem-pt1}   so we need only show that  $\beta_k[G] < \Delta + 1  $.  For a $k$-coloring of a graph with maximum degree $\Delta$, call a vertex $v$ \emph{extreme} if $v$ and all of its neighbors have the same color. Suppose for a contradiction that $G$ is a graph and $k$ is a positive integer  for which   $\beta_k[G] = \Delta + 1  $.  Thus  no $k$-coloring of $G$ is $[k,\Delta]$-balanced and therefore every $k$-coloring of $G$ will have an extreme vertex.   Fix a $k$-coloring of $G$ that has the minimum  possible number of extreme vertices and let $v$ be an extreme vertex.  Recolor $v$ so that it has a different color.  Now $v$ no longer has the same color as all of its neighbors, and recoloring $v$ does not create any new extreme vertices among its neighbors because they now have a different color from $v$.    Thus the new $k$-coloring has at least one fewer extreme vertex, a contradiction.     
\end{proof}

 The next lemma applies in the particular case where $k=2$.  

\begin{lem}
If every vertex of  graph $G$ has even degree, then $\beta_2(G)$ is even and $\beta_2[G]$ is odd.  If every  vertex of $G$ has odd degree, then $\beta_2[G]$ is even and  $\beta_2(G)$ is odd.
\label{even-deg-lem}
\end{lem}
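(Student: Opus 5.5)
The argument is a parity computation: with two colors, the difference between the two color counts in a neighborhood always has the same parity as the neighborhood's size. Fix any $2$-coloring $C_1 \cup C_2$ of $G$ and any vertex $v$. Write $a = C_1$-$\deg(v)$ and $b = C_2$-$\deg(v)$. Then $a+b = \deg(v)$, so
\[
a - b = \deg(v) - 2b \equiv \deg(v) \pmod 2 .
\]
Similarly, for the closed neighborhood, $C_1$-$\deg[v] + C_2$-$\deg[v] = \deg(v)+1$. Hence $C_1$-$\deg[v] - C_2$-$\deg[v] \equiv \deg(v)+1 \pmod 2$. Taking absolute values does not change parity.

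Now suppose every vertex of $G$ has even degree, and let $c$ be a $2$-coloring attaining $\beta_2(G)$. The quantity $\lambda_v = |C_1$-$\deg(v) - C_2$-$\deg(v)|$ is even for every $v$. Since $c$ is $(2,\beta_2(G))$-balanced, we have $\max_v \lambda_v \le \beta_2(G)$. This maximum is itself a value of $\lambda$ for which $c$ is balanced, so minimality forces $\beta_2(G) = \max_v \lambda_v$. That maximum is even, so $\beta_2(G)$ is even.

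The closed case is identical. Each closed-neighborhood difference is odd, so $\beta_2[G]$ equals a maximum of odd numbers and is therefore odd. This uses $V(G) \neq \emptyset$, so that the maximum is taken over a nonempty set. For the odd-degree case, swap the parities: open differences are odd and closed differences are even, giving $\beta_2(G)$ odd and $\beta_2[G]$ even.

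The only subtle point is justifying that the balance number equals the realized maximum difference for an optimal coloring, rather than some larger slack value. This holds because $\lambda$-balance is monotone in $\lambda$ and the balance number is defined as the least $\lambda$ that works. I would state this explicitly. Nothing else is an obstacle; Lemma~\ref{balance-lemma} is not needed beyond guaranteeing that the balance numbers are finite.
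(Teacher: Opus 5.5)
Your proof is correct and uses the same parity argument as the paper: with two colors, the difference of the color counts in $N(v)$ (resp.\ $N[v]$) has the parity of $\deg(v)$ (resp.\ $\deg(v)+1$). You also state explicitly that an optimal coloring realizes $\beta_2(G)$ (resp.\ $\beta_2[G]$) as the maximum of these differences, which fills in the step the paper covers only with ``consequently.''
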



\begin{proof}
    Let $G$ be a graph for which every vertex has even degree and fix any $2$-coloring of $G$ using colors red and blue.  Then, for any $v \in V(G)$,  the number of red  vertices in $N(v)$ has the same parity as the number of blue vertices in $N(v)$, and therefore the difference between these quantities is even. Consequently, $\beta_2(G)$ is even. Similarly, the number of red  vertices in $N[v]$ has the opposite parity from the number of blue vertices in $N[v]$, and therefore the difference between these quantities is odd and $\beta_2[G]$ is odd.  An analogous argument proves the second statement.
\end{proof}

\begin{prop}
The inequalities in Lemma~\ref{balance-lemma} are tight. 
    \label{tight-prop}
\end{prop}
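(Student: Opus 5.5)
The plan is to show that every inequality in Lemma~\ref{balance-lemma} holds with equality for at least one graph (and one value of $k$), using small, explicit examples. For the difference inequalities I will rely on the parity information in Lemma~\ref{even-deg-lem}. The two workhorse graphs are the ones already discussed after Lemma~\ref{fundamentals-lemma}: the cycle $C_4$, which is $(2,0)$-balanced, and the complete graph $K_4$, which is $[2,0]$-balanced.

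For part~\ref{bal-lem-pt1}, I use $K_2$, where $\Delta=1$.
\begin{itemize}
\item \emph{Open bound.} Each vertex has exactly one neighbor. So for any $k\ge 2$, one color has open degree $1$ and some other color has open degree $0$. Hence $\beta_k(K_2)=1=\Delta$.
\item \emph{Closed bound.} For $k\ge 3$, $N[v]$ has only two vertices. Some color is therefore absent while another is present, so $\beta_k[K_2]\ge 1$. Combined with the upper bound, $\beta_k[K_2]=1=\Delta$.
\item \emph{The case $k=2$ for the closed bound.} I would not insist on this case, since tightness only requires some graph and some $k$. If I did want it, I would first try small odd-degree graphs.
\end{itemize}

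For parts~\ref{bal-lem-pt2} and~\ref{bal-lem-pt4}, I take $k=2$.
\begin{itemize}
\item \emph{$C_4$.} Coloring two adjacent vertices red and the other two blue gives $\beta_2(C_4)=0$, and therefore $\beta_2([C_4])=0$. Every degree of $C_4$ is even, so Lemma~\ref{even-deg-lem} says $\beta_2[C_4]$ is odd, hence at least $1$. Lemma~\ref{balance-lemma}\ref{bal-lem-pt2} gives at most $1$, so $\beta_2[C_4]=1$. Thus $\beta_2[C_4]-\beta_2([C_4])=1$, $\beta_2(C_4)-\beta_2([C_4])=0$, and $\beta_2[C_4]-\beta_2(C_4)=1$.
\item \emph{$K_4$.} Coloring two vertices red and two blue gives $\beta_2[K_4]=0$, and therefore $\beta_2([K_4])=0$. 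Every degree of $K_4$ is odd, so Lemma~\ref{even-deg-lem} makes $\beta_2(K_4)$ odd, and as before it equals $1$. This gives $\beta_2(K_4)-\beta_2([K_4])=1$, $\beta_2[K_4]-\beta_2([K_4])=0$, and $\beta_2[K_4]-\beta_2(K_4)=-1$.
\end{itemize}
So these two graphs attain both ends of every inequality in parts~\ref{bal-lem-pt2} and~\ref{bal-lem-pt4}.

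Part~\ref{bal-lem-pt3} is an equality, so there is nothing to check. Part~\ref{bal-lem-pt5} is attained by the empty sequence of color $2$-switches, where $G'=G$. A less trivial illustration is the pair $G,G'$ in Figure~\ref{sample2switches}: both have $\beta_3=0$, since a $(3,0)$-balanced coloring is shown.

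The main obstacle is pinning down the exact values rather than just bounds, which needs a matching lower bound in each case. The parity argument of Lemma~\ref{even-deg-lem} supplies these lower bounds cleanly for $C_4$ and $K_4$, and counting for $K_2$. One edge case needs care: for an isolated vertex the proof of part~\ref{bal-lem-pt1} does not apply, so the examples should avoid isolated vertices.
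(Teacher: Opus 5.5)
Your proof is correct, but you use different witnesses from the paper.

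For part~\ref{bal-lem-pt1}, the paper uses cycles. It shows $\beta_2(C_n)=2$ when $n$ is not a multiple of $4$, via the Freyberg--Marr characterization of NBC cycles plus Lemma~\ref{even-deg-lem}. It shows $\beta_3[C_n]=2$ when $n$ is not a multiple of $3$, via a forcing argument. Your $K_2$ settles both bounds with a one-line count. It works uniformly for all $k\ge 2$ (open) and all $k\ge 3$ (closed), though only at the degenerate value $\Delta=1$.

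For parts~\ref{bal-lem-pt2} and~\ref{bal-lem-pt4}, the paper uses two families. The first is $K_{tk}$: your $K_4$ is its case $t=k=2$, and your parity step is the $k=2$ instance of its ``$tk-1$ is not a multiple of $k$'' step. The second is $K_{tk,k+1}$, where the paper has to work fairly hard to get $\beta_k(G)=\beta_k([G])=1$ and $\beta_k[G]=2$. Your $C_4$ attains the same three endpoints with values $0,0,1$ at essentially no cost.

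What the paper's families buy is tightness of~\ref{bal-lem-pt2} and~\ref{bal-lem-pt4} for every fixed $k\ge 2$. Lemma~\ref{even-deg-lem} confines you to $k=2$. If you want that generality cheaply, replace $C_4$ by $K_{k,k}$ with one vertex of each color on each side. Then $\beta_k(K_{k,k})=0$, and $\beta_k[K_{k,k}]=1$ because $k+1$ is not a multiple of $k$.

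Your treatment of~\ref{bal-lem-pt3} and~\ref{bal-lem-pt5} is fine. The paper does not address them in this proof; it instead shows in Example~\ref{ex-balance} that~\ref{bal-lem-pt5} can be strict.

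Drop the fallback of trying odd-degree graphs for the $k=2$ closed bound in~\ref{bal-lem-pt1}. For such graphs $\Delta$ is odd while $\beta_2[G]$ is even. More generally, the extreme-vertex recoloring in the proof of Lemma~\ref{balance-lemma}\ref{bal-lem-pt1} produces a $2$-coloring with no monochromatic closed neighborhood. Hence $\beta_2[G]\le\Delta-1$ whenever $G$ has no isolated vertices, which is why the paper also passes to $k=3$ there.

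Your isolated-vertex caveat is apt: $\beta_k[K_1]=1>\Delta$, so the closed bound in~\ref{bal-lem-pt1} genuinely needs $\Delta\ge 1$.
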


\begin{proof}

First we  show that the  two inequalities in Lemma~\ref{balance-lemma}\ref{bal-lem-pt1} are tight.  Let $G$ be the cycle $C_n$ where $n$ is not a multiple of $4$.  In \cite{FM24} it is shown that for these values of $n$ the cycle $C_n$ is not an NBC graph, thus $\beta_2(G) \ge 1$.  Furthermore, by Lemma~\ref{even-deg-lem}, since every vertex of $G$ has even degree, we know $\beta_2(G) \ge 2$.  However, $\beta_2(G) \le \Delta(G) =2$  by Lemma~\ref{balance-lemma}\ref{bal-lem-pt1}.  Thus  $\beta_2(C_n) = \Delta(C_n)$ when $n$ is   not a multiple of $4$ and the inequality $\beta_k(G) \le \Delta$  is tight.

\smallskip

Now let $G$ be the cycle $C_n$ where $n$ is not a multiple of three.   We will show $\beta_3[C_n] = 2$.
For a contradiction, suppose there is a $[3,1]$-balanced coloring of $C_n$  whose vertices are labeled $v_1, v_2,  \ldots, v_n$ consecutively around the cycle. Each closed neighborhood must consist of one vertex of each color, so  the  colors must appear in sequence and without loss of generality we may assume vertex $v_i$ is red when $i \equiv 0 \pmod 3$,   blue when $i \equiv 1 \pmod 3$, and green $i \equiv 2 \pmod 3$.  Since $n$ is not a multiple of $3$, the coloring is not $1$-balanced at $N[v_n]$, a contradiction.  Thus $\beta_3[C_n] = 2 = \Delta$ when $n$ is not a multiple of $3$  and the inequality  $\beta_k[G] \le \Delta$ is tight.   

\smallskip
 
Next we focus on the six inequalities in Lemma~\ref{balance-lemma}\ref{bal-lem-pt2} and Lemma~\ref{balance-lemma}\ref{bal-lem-pt4}. Let $G$ be the complete graph $K_n$ where $n= tk$ for some positive integer $t$. Notice that $N[v] = V(G)$ for each $v \in V(G)$. The $k$-coloring in which there are $t$ vertices of each color is $0$-balanced at each closed neighborhood, so $\beta_2[G] = \beta_2([G]) = 0$.  
However, $|N(v)| = kt-1$ for each $v \in V(G)$,  and this quantity is not a multiple of $k$, so no $k$-coloring of $G$ is $0$-balanced  at $N(v)$ and hence 
$\beta_2(G) \ge 1$.  The  reverse inequality $\beta_2(G) \le 1$ follows from  Lemma~\ref{balance-lemma}\ref{bal-lem-pt4}, thus $\beta_2(G) = 1$. Hence any  graph $G$ of the form $K_{tk}$ demonstrates that the following inequalities in Lemma~\ref{balance-lemma}\ref{bal-lem-pt2} and Lemma~\ref{balance-lemma}\ref{bal-lem-pt4} are tight: 
$ 0 \le \beta_k[G] - \beta_k([G]) $, \
$\beta_k(G) -  \beta_k([G]) \le  1$, 
  and 
$ -1 \le \beta_k[G] - \beta_k(G)$.  

\smallskip

We use a family of complete bipartite graphs to show the remaining inequalities in Lemma~\ref{balance-lemma}\ref{bal-lem-pt2} and Lemma~\ref{balance-lemma}\ref{bal-lem-pt4} are tight.
Let $G = K_{tk,k+1}$  with bipartition $X \cup Y$ where $|X| = tk$  and $|Y| = k+1$ and $t$ is a positive integer.  Since $|N(x)|$ is not a multiple of $k$ for any $x \in X$,  there is no $(k,0)$-balanced coloring of $G$, so $\beta_k(G) \ge 1$.
 Consider the $k$-coloring in which there are $t$ vertices of each color in $X$ and  $Y$ contains two vertices of color 1 and one vertex of each of the other colors.  This $k$-coloring is $0$-balanced at  $N(y)$ for each $y \in Y$ and $1$-balanced at $x$ for each $x \in N(x)$.  Hence $\beta_k(G)  = 1$ and $\beta_k([G]) \le \beta_k(G)= 1.$    We  next show $\beta_k([G])  = 1$.  For a contradiction, assume $\beta_k([G])  = 0$ and fix a $([k,0])$-balanced coloring of $G$.  As before, for any $x \in X$, the coloring is not $0$-balanced at $N(x)$ because $|N(x)| = k+1$, so it must be $0$-balanced at $N[x]$. Thus for any two vertices $x,x' \in X$, observe that both $Y \cup \{x\}$ and $Y \cup \{x'\}$ contain an equal number of vertices of each color.  This implies that $x$ and $x'$ have the same color and so all vertices of $X$ have the same color.  This is a contradiction since  $k \ge 2$ and therefore the coloring will not be $0$-balanced at $N(y)$ or at $N[y]$ for any $y \in Y$.
 Thus, $\beta_k([G])  = 1$. 
\smallskip

 Next, we show $\beta_k[G]  = 2$.  We know $\beta_k[G]  \le 1+ \beta_k(G)= 2$ so it remains to show $\beta_k[G] > 1$.  For a contradiction, suppose there is a $[k,1]$- balanced coloring of $G$.  Since $|Y| = k+1$, we know there exists a color (say red) that appears on at least two vertices in $Y$ and another color (say blue) that appears on at most one vertex of $Y$.  For any red vertex $x \in X$, the closed neighborhood $N[x]$ contains at least three red and at most one blue vertex, a contradiction.  Consequently, there are no red vertices in $X$ and since $|X| \ge k$, there must be at least two vertices of the same color in $X$.   Now for any non-red vertex $y \in Y$, the closed neighborhood $N[y]$ contains no red vertices and at least two vertices of the same color, a contradiction.   Thus $\beta_k[G]  = 2$.
As a result any  graph $G$ of the form $K_{tk,k+1}$ demonstrates that the following inequalities in Lemma~\ref{balance-lemma} \ref{bal-lem-pt2} and Lemma~\ref{balance-lemma} \ref{bal-lem-pt4} are tight: 
$\beta_k[G] -  \beta_k([G]) \le  1$, 
\ $  0 \le  \beta_k(G) - \beta_k([G])   $, and 
$\beta_k[G] - \beta_k(G)  \le 1$.  
\end{proof}

Observe that the conclusion of Lemma~\ref{balance-lemma}\ref{bal-lem-pt5}
 is the  inequality  $\beta_k(G')\le \lambda$ rather than an equality.  The next example shows that the inequality can be strict.

\begin{exa} 
\rm 

In Figure~\ref{fig-balance}, the graph $G$ has a 2-coloring demonstrating $\beta_2(G) \le 2$. In fact, 
$\beta_2(G) = 2$, as we now show.  By Lemma~\ref{even-deg-lem} we know $\beta_2(G)$ is even, so it suffices to show $\beta_2(G) > 0$.  Suppose for a contradiction that there exists a $(2,0)$-balanced coloring of $G$ and without loss of generality we may assume $v_1$ is blue.  Since $|N(v_1)|=2$, one neighbor of $v_1$ must be red and the other blue, so  again without loss of generality,  we may assume $v_8$ is red and $v_2$ is blue.  Now $N(v_8)$ must contain two red and two blue vertices, so $v_6$ and $v_7$ are red, a contradiction since the coloring is not $0$-balanced at $N(v_7)$.  Thus, $\beta_2(G) = 2$.

A  color $2$-switch transforms  the $2$-colored graph $G$ into $G'$ by replacing the edges $v_2v_8$ and $v_4v_6$ with $v_2v_6$ and $v_4v_8$ as illustrated in Figure~\ref{fig-balance}. Thus, $\beta_2(G') \le 2$ by Lemma~\ref{balance-lemma}\ref{bal-lem-pt5}.    However, $\beta_2(G')=0$ and   a $(2,0)$-balanced coloring of $G'$ is shown in Figure~\ref{fig-balance}.
\label{ex-balance}
\end{exa}

While many of the graphs considered in this paper have a small balance number, there exist graphs, specifically bipartite graphs, for which $\beta_k(G)$, $\beta_k[G]$, and $\beta_k([G])$, are arbitrarily large.  The construction in the following theorem was communicated to us by Craig Larson \cite{Larson}.

\begin{thm}
For any integer $k \ge 2$, there exist bipartite graphs $G$ for which  $\beta_k(G)$, $\beta_k[G]$, and $\beta_k([G])$, are arbitrarily large. 
    \label{arb-large-thm}
\end{thm}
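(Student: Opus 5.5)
Since $\beta_k([G]) \le \beta_k(G)$ and $\beta_k([G]) \le \beta_k[G]$ by Lemma~\ref{balance-lemma}\ref{bal-lem-pt2}, it suffices to make $\beta_k([G])$ large. Fix a target $m$. I will use a bipartite construction of "all-subsets" type. Let $X$ be a set of $n$ vertices, with $n$ large to be chosen. For every $s$-subset $S \subseteq X$, add a new vertex $y_S$ adjacent exactly to the vertices of $S$, and let $Y$ be the set of all such $y_S$. The resulting graph $G$ is bipartite with parts $X$ and $Y$. I only need to control the neighborhoods of the vertices in $Y$.

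Fix any $k$-coloring of $G$. By pigeonhole, some color class $C_i$ contains at least $\lceil n/k \rceil$ vertices of $X$. Choose $n \ge ks$, so we can pick an $s$-subset $S \subseteq X \cap C_i$. Then $N(y_S) = S$ is monochromatic. Hence $C_i$-$\deg(y_S) = s$ while $C_j$-$\deg(y_S) = 0$ for any $j \ne i$, and this is where $k \ge 2$ is used. So the coloring is not $\lambda$-balanced at $N(y_S)$ for any $\lambda < s$.

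For the closed neighborhood, $N[y_S] = S \cup \{y_S\}$. For every $j \ne i$, either $C_j$-$\deg[y_S] = 0$, or $C_j$-$\deg[y_S] = 1$ when $y_S$ has color $j$. Since $k \ge 2$, we can pick a color $j \ne i$ with $y_S \notin C_j$; if $y_S \in C_i$ any $j \neq i$ works. Then $C_i$-$\deg[y_S] \ge s$ and $C_j$-$\deg[y_S] = 0$, so the imbalance at $N[y_S]$ is at least $s$.

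Thus every $k$-coloring of $G$ fails to be $\lambda$-balanced at both $N(y_S)$ and $N[y_S]$ whenever $\lambda \le s-1$. This gives $\beta_k([G]) \ge s$. Taking $s = m$ and $n = km$ yields $\beta_k(G), \beta_k[G], \beta_k([G]) \ge m$, and since $m$ is arbitrary the theorem follows.

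The only delicate point is the closed-neighborhood case, where $y_S$ itself contributes one vertex. Choosing the comparison color $j$ to avoid the color of $y_S$ handles it, and in the worst case this costs only $1$ in the bound. There is no real obstacle beyond making the pigeonhole choice of $n$ explicit. If one wanted a connected example, the construction is already connected once $s \ge 1$ and $n > s$, since any two vertices of $X$ lie in a common $s$-subset.
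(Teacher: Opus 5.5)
\textbf{Gap in the closed-neighborhood step.} Your construction and pigeonhole step are the same as the paper's, and your open-neighborhood argument is correct. The closed-neighborhood step, however, fails when $k=2$. If $y_S$ receives the color $j\neq i$, there is no color other than $i$ that avoids $y_S$. Then $C_i$-$\deg[y_S]=s$ and $C_j$-$\deg[y_S]=1$, an imbalance of only $s-1$, and a coloring may do this for every $y_S$ with monochromatic $S$.

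So for $k=2$ your claims are unjustified: that every coloring fails at $N[y_S]$ for $\lambda=s-1$, that $\beta_k([G])\ge s$, and that $\beta_k[G]\ge m$. They can in fact be false. Take $k=s=m=2$ and $n=4$, so your $G$ is the subdivision of $K_4$. Color $x_1,x_2$ red and $x_3,x_4$ blue; color $y_{12},y_{13},y_{24}$ blue and $y_{14},y_{23},y_{34}$ red. This coloring is $1$-balanced at every closed neighborhood: each $N[x_t]$ has two vertices of each color, and each $N[y_S]$ has a $2$--$1$ split. Hence $\beta_2([G])\le\beta_2[G]\le 1<2$. Your closing remark that the closed case ``costs only $1$'' contradicts the bound you actually state.

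\textbf{Repair and comparison with the paper.} The correct conclusion is $\beta_k([G])\ge s-1$, so take $s=m+1$. For $k\ge 3$ you do get $\beta_k([G])\ge s$, since two colors other than $i$ exist and $y_S$ lies in at most one of them. The paper proceeds differently: it takes $|X|=(\lambda-1)k+1$ and proves only $\beta_k(G)\ge\lambda$ via open neighborhoods. It then obtains $\beta_k[G],\beta_k([G])\ge\lambda-1$ from Lemma~\ref{balance-lemma}\ref{bal-lem-pt2} and \ref{bal-lem-pt4}. Your direct treatment of the local number is a valid alternative once the loss of $1$ is recorded, and it is slightly sharper for $k\ge 3$.

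Separately, your connectivity remark needs $s\ge 2$: for $s=1$ the graph is a perfect matching.
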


\begin{proof}
For integers $k \ge 2$ and $\lambda \ge 0$ we construct a bipartite graph $G$ for which $\beta_k(G) \ge \lambda$.  Let $V(G) = X \cup Y$ where $X = \{1,2,3, \ldots, (\lambda - 1)k+1\}$ and $Y$ is the set of all $\lambda$-element subsets of $X$.   An edge exists in $G$ between $x \in X$ and $S \in Y$ if and only if $x \in S$.  We fix any $k$-coloring of $G$ and show that it is not $(\lambda -1)$-balanced.  Since $|X| = (\lambda -1)k + 1$, there must be at least $\lambda$ vertices in $X$ that have the same color.  Without loss of generality we may assume that vertices $1,2,3, \ldots, \lambda$ in $X$ each have color 1.  The open neighborhood of vertex $\{1,2,3, \ldots, \lambda\} \in Y$ consists of $\lambda$ vertices of color $1$ and no vertices of color $2$, so the coloring is not 
$(\lambda -1)$-balanced.  Thus $\beta_k(G) \ge \lambda$.

By Lemma~\ref{balance-lemma}\ref{bal-lem-pt4},
  we know $\beta_k[G] \ge \beta_k(G) -1 \ge  \lambda-1$ and   by  Lemma~\ref{balance-lemma}\ref{bal-lem-pt2} we know $\beta_k([G])   \ge \beta_k(G) -1 \ge \lambda-1$, completing the proof.
\end{proof}

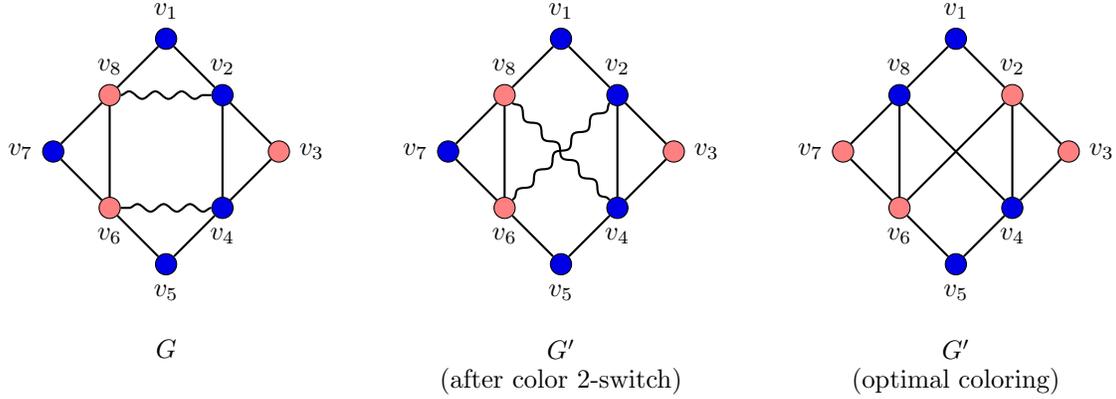
\begin{figure}\centering
\begin{tikzpicture}[scale=.75]
\bvertex (1) at (0,0) [label= left: $v_7$] {};
\rvertex (2) at (1,1) [label= above: $v_8$] {};
\bvertex (3) at (2,2) [label= above: $v_1$] {};
\bvertex (4) at (3,1) [label= above: $v_2$] {};
\rvertex (5) at (4,0) [label=right: $v_3$] {};
\bvertex (6) at (3,-1) [label= below: $v_4$] {};
\bvertex (7) at (2,-2) [label= below: $v_5$] {};
\rvertex (8) at (1,-1) [label = below: $v_6$] {};

\draw[-, thick] (1) to (2) to (3) to (4) to (5) to (6) to (7) to (8) to (1);
\draw[-, thick] (4) to (6);
\draw[-, thick] (8) to (2);
\draw[thick, wiggly] (2)--(4);
\draw[thick, wiggly] (6)--(8);

\node[draw=none, fill=none, label=below:$G$] (label) at (2,-3) {};

\bvertex (1) at (7,0) [label= left: $v_7$] {};
\rvertex (2) at (8,1) [label= above: $v_8$] {};
\bvertex (3) at (9,2) [label= above: $v_1$] {};
\bvertex (4) at (10,1) [label= above: $v_2$] {};
\rvertex (5) at (11,0) [label=right: $v_3$] {};
\bvertex (6) at (10,-1) [label= below: $v_4$] {};
\bvertex (7) at (9,-2) [label= below: $v_5$] {};
\rvertex (8) at (8,-1) [label = below: $v_6$] {};

\draw[-, thick] (1) to (2) to (3) to (4) to (5) to (6) to (7) to (8) to (1);
\draw[thick, wiggly] (2) to (6);
\draw[-, thick] (2) to (8);
\draw[-, thick] (4) to (6);
\draw[thick, wiggly] (4) to (8);

\node[draw=none, fill=none, label=below:$G'$] (label) at (9,-3) {};
\node[draw=none, fill=none, label=below:$\text{(after color 2-switch)}$] (label) at (9,-3.5) {};

\rvertex (1) at (14,0) [label= left: $v_7$] {};
\bvertex (2) at (15,1) [label= above: $v_8$] {};
\bvertex (3) at (16,2) [label= above: $v_1$] {};
\rvertex (4) at (17,1) [label= above: $v_2$] {};
\rvertex (5) at (18,0) [label=right: $v_3$] {};
\bvertex (6) at (17,-1) [label= below: $v_4$] {};
\bvertex (7) at (16,-2) [label= below: $v_5$] {};
\rvertex (8) at (15,-1) [label = below: $v_6$] {};

\draw[-, thick] (1) to (2) to (3) to (4) to (5) to (6) to (7) to (8) to (1);
\draw[-, thick] (2) to (6);
\draw[-, thick] (2) to (8);
\draw[-, thick] (4) to (6);
\draw[-, thick] (4) to (8);

\node[draw=none, fill=none, label=below:$G'$] (label) at (16,-3) {};
\node[draw=none, fill=none, label=below:$\text{(optimal coloring)}$] (label) at (16,-3.5) {};

\end{tikzpicture}
\caption{For the graph $G$, $\beta_2(G)=2$. A color 2-switch in which edges $v_2v_8$ and $v_4v_6$ are replaced by $v_2v_6$ and $v_4v_8$ transforms $G$ into $G'$. But, $\beta_2(G')=0$.
}
\label{fig-balance} 
\end{figure}

We end this section by showing that the quantities $\beta_3(G)$,  $\beta_3[G]$, and $\beta_3([G])$  are all equal when $G$ is   the Petersen graph.

\begin{figure}[ht]\centering
\begin{tikzpicture}[scale=.75, every node/.style={circle, draw, minimum size=8pt, inner sep=0pt}]

  \node (u0) at (0,2.5)              [fill=green!70!black, label=above:$v_{1}$] {};
  \node (u1) at (2.38,0.77)          [fill=blue!90!black, label=right:$v_2$] {};
  \node (u2) at (1.47,-2.02)         [fill=red!50, label=below right:$v_3$] {};
  \node (u3) at (-1.47,-2.02)        [fill=green!70!black, label=below left:$v_4$] {};
  \node (u4) at (-2.38,0.77)         [fill=red!50, label=left:$v_5$] {};

  \node (w0) at (0,1.2)              [fill=blue!90!black, label=above right:$v_6$] {};
  \node (w1) at (1.14,0.37)          [fill=green!70!black, label= below right:$v_7$] {};
  \node (w2) at (0.70,-0.97)         [fill=green!70!black, label= right:$v_8$] {};
  \node (w3) at (-0.70,-0.97)        [fill=blue!90!black, label=left:$v_9$] {};
  \node (w4) at (-1.14,0.37)         [fill=red!50, label=below left:$v_{10}$] {};

  \draw (u0)--(u1)--(u2)--(u3)--(u4)--(u0);

  \draw (w0)--(w2)--(w4)--(w1)--(w3)--(w0);

  \draw (u0)--(w0);
  \draw (u1)--(w1);
  \draw (u2)--(w2);
  \draw (u3)--(w3);
  \draw (u4)--(w4);

\end{tikzpicture}
\caption{A 3-coloring of the Petersen graph that is  (3,2)-balanced, [3,2]-balanced and  therefore ([3,2])-balanced. }
\label{fig-Pet-3color}
\end{figure}
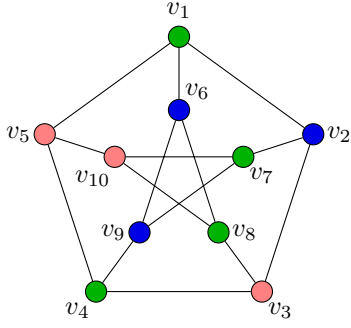

\begin{prop}    
 If $G$ is the Petersen graph, then  $\beta_3(G) = \beta_3[G] = \beta_3([G])  = 2$.
 \label{Petersen-prop}
 \end{prop}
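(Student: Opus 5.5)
The plan is to prove the upper bound $\beta_3(G),\beta_3[G]\le 2$ from the coloring in Figure~\ref{fig-Pet-3color}, and the lower bound $\beta_3([G])\ge 2$ by reducing $([3,1])$-balance on the Petersen graph to a domination problem. Lemma~\ref{balance-lemma}\ref{bal-lem-pt2} gives $\beta_3([G])\le \beta_3(G)$ and $\beta_3([G])\le\beta_3[G]$. Combining these, all three numbers are squeezed to $2$.

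\emph{Upper bound.} I will check directly that the coloring in Figure~\ref{fig-Pet-3color} is $2$-balanced at every $N(v)$ and every $N[v]$. Since $|N(v)|=3$ and $|N[v]|=4$, the only ways to fail are that $N(v)$ is monochromatic, or that some color occurs $3$ or more times in $N[v]$ while another color is absent. Both are ruled out by a routine pass over the ten vertices.

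\emph{Lower bound: reformulation.} Suppose, for a contradiction, that some $3$-coloring is $([3,1])$-balanced. Because $G$ is $3$-regular, the coloring is $1$-balanced at $N(v)$ exactly when the three neighbors of $v$ have three distinct colors. It is $1$-balanced at $N[v]$ exactly when the color counts in $N[v]$ are $2,1,1$. In both cases $N[v]$ meets all three colors, and conversely any $4$-set meeting all three colors has counts $2,1,1$. So the hypothesis says precisely that every color class $C_i$ is a dominating set of $G$.

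\emph{Lower bound: counting and structure.} First I will recall that the domination number of the Petersen graph is $3$: four closed neighborhoods are needed to cover $10$ vertices with sets of size $4$, up to overlaps. Hence each $|C_i|\ge 3$, and since $|C_1|+|C_2|+|C_3|=10$, the sizes are $3,3,4$ in some order. The key step, which I expect to be the main obstacle, is to classify the dominating $3$-sets $D$. The three closed neighborhoods have total size $12$ and must cover $10$ vertices, so the total excess coverage is exactly $2$. I will use the fact that in the Petersen graph nonadjacent vertices have exactly one common neighbor and adjacent vertices have none, and split into two cases.
\begin{itemize}
\item If $D$ is independent, each of the three pairs contributes a common neighbor. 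This stays within an excess of $2$ only if all three pairs share the same common neighbor $w$, so $D=N(w)$.
\item If $D$ contains an edge $ab$, then $N[a]\cup N[b]$ has $6$ vertices and already uses the full excess of $2$. So the third vertex $c$ must satisfy $N[c]=V(G)\setminus(N[a]\cup N[b])$. An edge count ($15$ total, $5$ inside the $6$-set, $8$ leaving it) shows those remaining $4$ vertices induce $2K_2$, not a star, which is a contradiction.
\end{itemize}
Thus the two color classes of size $3$ are $N(w_1)$ and $N(w_2)$ for some vertices $w_1,w_2$. Then all three neighbors of $w_1$ receive the same color. Whatever the color of $w_1$, the set $N[w_1]$ contains at least three vertices of one color, so some other color is absent from $N[w_1]$. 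This contradicts domination, so $\beta_3([G])\ge 2$, completing the argument.
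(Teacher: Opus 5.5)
Your proof is correct, but it follows a different route from the paper's.

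The paper proves $\beta_3(G)>1$ and $\beta_3[G]>1$ by two separate vertex-by-vertex forcing arguments on the labeled drawing, and it uses $\beta_3(G)>1$ inside the closed case. It then gets $\beta_3([G])=\beta_3[G]$ from $3$-regularity: a coloring that is $1$-balanced at $N(v)$ is $0$-balanced there, hence $1$-balanced at $N[v]$.

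You instead attack the weakest condition, $([3,1])$-balance, directly.

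\begin{itemize}
\item For a cubic graph you show this condition is equivalent to every color class being dominating. This recovers the paper's regularity observation as a byproduct.
\item You rule it out using only two facts: adjacent vertices have no common neighbor, and nonadjacent vertices have exactly one. Together with the overcount $12-10=2$, these show every dominating $3$-set is an open neighborhood $N(w)$.
\item Lemma~\ref{balance-lemma}\ref{bal-lem-pt2} then transfers the lower bound to $\beta_3(G)$ and $\beta_3[G]$ at no extra cost.
\end{itemize}

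Your route is label-free and settles all three numbers with one argument. It also explains the phenomenon: for cubic graphs, $\beta_3[G]\le 1$ holds exactly when $G$ has three disjoint dominating sets, i.e., domatic number at least $3$. The paper's route is more hands-on. It needs neither the common-neighbor structure of the Petersen graph nor a classification of its minimum dominating sets, but it needs two separate case analyses tied to specific labels.

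Two small slips to fix:
\begin{itemize}
\item The justification of $|C_i|\ge 3$ should be that two closed neighborhoods cover at most $8<10$ vertices. It is not that ``four closed neighborhoods are needed''; three suffice, e.g.\ $N(w)$.
\item Your edge count shows only that the leftover four vertices span exactly $2$ edges. That already rules out $N[c]$, which spans at least the $3$ edges of a star, so calling the leftover graph $2K_2$ is unnecessary.
\end{itemize}
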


 \begin{proof}
Figure~\ref{fig-Pet-3color} shows a $3$-coloring of the Petersen graph $G$ that is simultaneously $(3,2)$-balanced,  $[3,2]$-balanced, and   $([3,2])$-balanced.  Therefore, $\beta_3(G) \le 2$, \  $\beta_3[G] \le 2$, and $\beta_3([G]) \le 2$.  It remains to show the reverse inequalities.   First we show  $\beta_3(G) > 1$.  For a contradiction, suppose there is a $3$-coloring of $G$ (using colors red, blue, and green)  that is $1$-balanced at $N(v)$ for each $v \in V(G)$.   Thus the open neighborhood of each vertex must contain one vertex of each color. Applying this to $v_{1}$, we may assume, without loss of generality,  that $v_5$ is red, $v_6$ is blue, and $v_2$ is green.  If $v_{10}$ were green then $v_7$ would have two green neighbors, a contradiction.  Similarly, if $v_4$ were green then $v_3$ would have two green neighbors, a contradiction. Thus the green neighbor of $v_5$ must be $v_{1}$.  A symmetric argument applied to the neighbors of $v_2$  and using the color red in place of green, implies that $v_{1}$ must be red, a contradiction.

Next we show that $\beta_3[G] > 1$.  Suppose for a contradiction that there is a $3$-coloring of $G$ (using colors red, blue, and green)  that is $1$-balanced at $N[v]$ for each $v \in V(G)$.   Since we already showed that $\beta_3(G) > 1$, we know this coloring must have a vertex for which two of its neighbors have the same color.  Without loss of generality we may assume $v_{1}$ is blue, $v_5$ and $v_6$ are red, and $v_2$ is green.   Since $v_5$ must have a green neighbor, we know $v_4$ or $v_{10}$ must be green.  If $v_{10}$ is green then $v_7$ is red or blue (because it has two green neighbors), $v_9$ is red or blue (because $v_7$ has two green neighbors) and then $v_4$ is green (because the other neighbors  of $v_9$ are red or blue).  So in fact $v_4$ must be green regardless of whether $v_{10}$ is green.  Now $v_3$ must be red or blue (because it has two green neighbors), and $v_8$ must be red or blue (because $v_3$ already has two green neighbors), so $v_9$ is green (because $v_6$ must have a green neighbor).  Now $v_7$ is red or blue (because it has two green neighbors) and $v_{10}$ green (because its neighbors are all red or blue).  This is a contradiction because $v_7$ has three green neighbors.  

\smallskip
Finally, we show $\beta_3([G])=2$.   Since $G$ is $3$-regular,  for any vertex $v$, any $3$-coloring that is $1$-balanced at $N(v)$ is in fact $0$-balanced at $N(v)$, and therefore  is $1$-balanced at $N[v]$.  Thus, $\beta_3([G]) = \beta_3[G] = 2.$
\end{proof}

\section{Four classes of $2$-colored and $1$-balanced graphs} 
\label{sec-pb-2-1}

We return to the original instance of the problem in which there are two colors, red and blue, and this models situations where there are two alternatives.  We have seen in Theorem~\ref{arb-large-thm} that any of the three types of balance number can be arbitrarily large and for applications the difference between a graph having balance number $\lambda$ versus balance number $\lambda + 1$ may not be that significant once $\lambda \ge 2$.   Thus   in the remainder of the paper we focus on graphs that are not only $2$-colored but also $1$-balanced.  Recall that $\NBC$ is the class of $(2,0)$-balanced graphs and $\CNBC$ is the class of $[2,0]$-balanced graphs.  If a graph $G$ has both even and odd degree vertices, then   $ G \not\in \NBC$ and  $G \not\in\CNBC$, but it is possible for $G$ to be $(2,1)$-balanced and/or $[2,1]$-balanced.  


Just as NBC and CNBC are useful abbreviations for graphs with $\beta_2(G) =0$ and $\beta_2[G] =0$  respectively, we refer to graphs $G$ with 
$\beta_2(G) \le 1 $ as \emph{open semi-balanced }(OSB), those with $\beta_2[G] \le 1 $ as \emph{closed semi-balanced} (CSB), and  those with $\beta_2([G]) \le 1 $ as  \emph{semi-balanced  at each vertex} (SBV).  We  denote the set of all OSB graphs as $\OSB$ and, respectively, $\CSB$ and $\SBV$ for the set of all CSB graphs and the set of all SBV graphs. In addition, we refer to a $2$-coloring of graph $G$ as an \emph{OSB coloring} if it is $1$-balanced  at $N(v)$ for each $v\in V(G) $ and analogously define  \emph{CSB colorings}  and \emph{SBV colorings.}

We present one additional generalization of 
of NBC and CNBC graphs  that applies only to $2$-colored graphs.  It
provides another way to include graphs that have both even and odd degree  vertices while maintaining   the concept of  $0$-balance at each vertex.

\begin{defn}\rm
  Graph $G$ is \emph{parity balanced} (PB) if  there exists a $2$-coloring of $G$ that is $0$-balanced at $N(x)$ for every even degree vertex $x$ and $0$-balanced at $N[y]$ for every odd degree vertex $y$.  We call such a coloring a \emph{PB}-coloring and denote by $\PB$ the set of all  PB graphs.
  \label{PB-def}
\end{defn}

We will see in Proposition~\ref{prop:paths} 
that paths with an even number of vertices are parity balanced while those with an odd number of vertices are not.

 For graphs in which all vertices have even degree, the definitions of PB, OSB, and NBC coincide; for graphs in which all vertices have odd degree, the definitions of PB, CSB, and CNBC coincide. We record this in the next observation.

\begin{obs} \rm 
Let $G$ be a graph.
    \begin{itemize}
           \item  If every vertex of graph $G$ has even degree then the following are equivalent:\\  (i) $G \in \PB$, (ii)\  $G \in \OSB$, (iii) $G \in \NBC$.  
           
             \item  If every vertex of graph $G$ has odd degree then the following are equivalent:\\  (i) $G \in \PB$, (ii)\  $G \in \CSB$, (iii) $G \in \CNBC$. 
    \end{itemize}
  \label{even-degree-obs}
\end{obs}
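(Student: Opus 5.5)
The plan is to prove each bullet by a chain of implications (iii) $\Rightarrow$ (i) $\Rightarrow$ (ii) $\Rightarrow$ (iii), working directly with a single fixed $2$-coloring and using only Definitions~\ref{open-bal-def}, \ref{closed-bal-def}, and \ref{PB-def} together with Lemma~\ref{even-deg-lem}.

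For the first bullet, suppose every vertex of $G$ has even degree. Then there are no odd degree vertices, so Definition~\ref{PB-def} only imposes $0$-balance at $N(x)$ for every vertex $x$. This is exactly the definition of a $(2,0)$-balanced coloring, so a coloring is a PB-coloring if and only if it is an NBC coloring. That gives (i) $\Leftrightarrow$ (iii).

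Next, any $(2,0)$-balanced coloring is $(2,1)$-balanced, which gives (iii) $\Rightarrow$ (ii). The only substantive step is (ii) $\Rightarrow$ (iii). Here I would invoke Lemma~\ref{even-deg-lem}: since all degrees are even, $\beta_2(G)$ is even. So $\beta_2(G)\le 1$ forces $\beta_2(G)=0$. Concretely, at each vertex $v$ the red and blue counts in $N(v)$ sum to the even number $\deg(v)$, so their difference is even, and a difference of at most $1$ must be $0$. Hence an OSB coloring is already an NBC coloring.

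The second bullet is symmetric. If every degree is odd, the PB condition asks for $0$-balance at every $N[y]$, which is exactly a CNBC coloring; this gives (i) $\Leftrightarrow$ (iii). Also (iii) $\Rightarrow$ (ii) is trivial. For (ii) $\Rightarrow$ (iii), note that $|N[v]|=\deg(v)+1$ is even, so the red and blue counts in $N[v]$ differ by an even number. A difference of at most $1$ is therefore $0$; equivalently, Lemma~\ref{even-deg-lem} gives that $\beta_2[G]$ is even.

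I expect no real obstacle. The only point needing care is the parity step, and I would state the parity argument inline so the proof does not depend on the precise phrasing of Lemma~\ref{even-deg-lem}.
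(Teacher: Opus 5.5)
Your proposal is correct and takes essentially the paper's approach. The paper gives no separate proof and treats the statement as immediate from the definitions: PB reduces to NBC (resp.\ CNBC) when every degree is even (resp.\ odd), and OSB (resp.\ CSB) collapses to $0$-balance by the parity argument of Lemma~\ref{even-deg-lem}, which you also carry out inline at the level of individual colorings.
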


\begin{obs}
Let $G$ be a graph and $v \in V(G)$.   If $\deg(v)$ is odd then $|N(v)|$ is odd, so no $2$-coloring is $0$-balanced at $N(v)$.  If $\deg(v)$ is even then $|N[v]|$ is odd, so no $2$-coloring is $0$-balanced at $N[v]$.  

\label{zero-bal-obs}
\end{obs}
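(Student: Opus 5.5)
This observation is a parity count, and I would prove it directly from the definitions of $N(v)$ and $N[v]$ together with the meaning of $0$-balance for $k=2$. Fix a graph $G$, a vertex $v$, and an arbitrary $2$-coloring $C_1 \cup C_2$ (red and blue). The first step is to spell out the condition: a coloring is $0$-balanced at $N(v)$ exactly when $C_1$-$\deg(v) = C_2$-$\deg(v)$. Since $C_1$ and $C_2$ partition $V(G)$, this condition forces
\[
|N(v)| = C_1\text{-}\deg(v) + C_2\text{-}\deg(v) = 2\, C_1\text{-}\deg(v),
\]
which is even. By the same reasoning, $0$-balance at $N[v]$ forces $|N[v]| = 2\,C_1$-$\deg[v]$, which is also even.

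The second step is to record the sizes of the two neighborhoods. We have $|N(v)| = \deg(v)$, and because $v \notin N(v)$, we also have $|N[v]| = \deg(v)+1$.

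The third step splits on the parity of $\deg(v)$. If $\deg(v)$ is odd, then $|N(v)|$ is odd, so $|N(v)|$ cannot equal $2\,C_1$-$\deg(v)$, and no $2$-coloring is $0$-balanced at $N(v)$. If $\deg(v)$ is even, then $|N[v]| = \deg(v)+1$ is odd, and no $2$-coloring is $0$-balanced at $N[v]$. This is the same parity reasoning already used in the proof of Lemma~\ref{even-deg-lem}, applied here at a single vertex.

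No step poses a real obstacle. The only point that needs care is noting that $v \notin N(v)$, since this is what guarantees $|N[v]| = \deg(v)+1$ rather than $\deg(v)$.
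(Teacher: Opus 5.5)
Your proof is correct and matches the paper, which states this observation without a separate proof and embeds the same parity reasoning in the statement: with two colors, $0$-balance forces the neighborhood to have even size, while $|N(v)|=\deg(v)$ and $|N[v]|=\deg(v)+1$. Spelling out that $|N(v)| = 2\,C_1$-$\deg(v)$ under $0$-balance, and that $v\notin N(v)$, fills in everything the observation leaves implicit.
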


 Proposition~\ref{prop:contain} shows containments between our graph classes, and these are illustrated in the Venn diagram in Figure~\ref{fig-venn}. 
 By Lemma \ref{balance-lemma}\ref{bal-lem-pt2}, every graph  $G$ in $\SBV$  has $\beta_2(G) \le 2$ and $\beta_2[G] \le 2$.

\begin{prop}\label{prop:contain}
The following containments hold between these graph classes.  
\begin{enumerate}[label=(\roman*), font=\normalfont]
\item \label{prop:containment-01} $(\PB \cup \CSB \cup \OSB) \subseteq S\BV$ 
 \item \label{prop:containment-02}$(\NBC \cup \CNBC) \subseteq \PB \subseteq (\CSB \cap \OSB)$.
 \end{enumerate}
\end{prop}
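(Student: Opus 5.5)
The plan is to verify each containment directly from the definitions, combining them with Lemma~\ref{fundamentals-lemma}\ref{lem:fundamentals-lemma02} and Observation~\ref{zero-bal-obs}. Throughout, $k=2$, and for any graph a $2$-coloring witnessing membership in one class will be checked to witness membership in the other class.

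For part \ref{prop:containment-01}, the containments $\OSB \subseteq \SBV$ and $\CSB \subseteq \SBV$ are exactly Lemma~\ref{fundamentals-lemma}\ref{lem:fundamentals-lemma02} with $k=2$ and $\lambda=1$. Equivalently, $\beta_2([G]) \le \min\{\beta_2(G),\beta_2[G]\}$ by Lemma~\ref{balance-lemma}\ref{bal-lem-pt2}. For $\PB \subseteq \SBV$, take a PB-coloring. At every vertex it is $0$-balanced at either $N(v)$ (even degree) or $N[v]$ (odd degree), hence $1$-balanced there. By Definition~\ref{bal-def} this is a $([2,1])$-balanced coloring.

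For part \ref{prop:containment-02}, the first containment $\NBC \subseteq \PB$ works as follows. If $G \in \NBC$, then every vertex has even degree, as noted in the introduction. An NBC coloring is therefore $0$-balanced at $N(x)$ for every vertex, all of which are even-degree, so it is a PB-coloring; there are no odd-degree vertices to check. Symmetrically, a CNBC graph has all degrees odd, and its coloring is $0$-balanced at every $N[y]$, so it is a PB-coloring. This also follows from Observation~\ref{even-degree-obs}.

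For the second containment $\PB \subseteq \CSB \cap \OSB$, fix a PB-coloring and check each vertex $v$ against both the open and the closed condition.
\begin{itemize}
\item If $\deg(v)$ is even, the red and blue counts in $N(v)$ are equal. Adding $v$ itself changes exactly one count by $1$, so $N[v]$ is $1$-balanced. Thus both $N(v)$ and $N[v]$ are $1$-balanced.
\item If $\deg(v)$ is odd, the counts in $N[v]$ are equal. Removing $v$ changes one count by $1$, so $N(v)$ is $1$-balanced.
\end{itemize}
Hence the same coloring is both an OSB coloring and a CSB coloring, so $G \in \OSB \cap \CSB$.

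No step is a serious obstacle. The only care needed is the bookkeeping in the last step: the passage between open and closed neighborhoods alters exactly one color count by exactly one, which is the mechanism behind Lemma~\ref{fundamentals-lemma}\ref{lem:fundamentals-lemma04}.
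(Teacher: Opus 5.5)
Your proof is correct and follows essentially the same route as the paper. Part (i) comes directly from the definitions, the first containment of (ii) from the fact that NBC graphs have all even degrees and CNBC graphs all odd degrees (Observation~\ref{even-degree-obs}), and the second from noting that passing between $N(v)$ and $N[v]$ changes exactly one color count by one; you simply make explicit a few details, such as $0$-balance implying $1$-balance, that the paper leaves implicit.
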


\begin{proof} The containment in \ref{prop:containment-01} follows directly from the definitions of these classes.

If $G \in \NBC$, then every vertex has even degree; if $G \in \CNBC$, then every vertex has odd degree. Thus, the first containment in \ref{prop:containment-02} follows from Observation~\ref{even-degree-obs}.  
For the second containment in \ref{prop:containment-02}, fix a $PB$-coloring of $G$.
By  Definition~\ref{PB-def}, for a vertex $v$ of even degree we know  the coloring is $0$-balanced at $N(v)$ and hence is $1$-balanced at $N[v]$.  Similarly, for a vertex $v$ of odd degree we know  the coloring is $0$-balanced at $N[v]$ and hence is $1$-balanced at $N(v)$.
\end{proof}

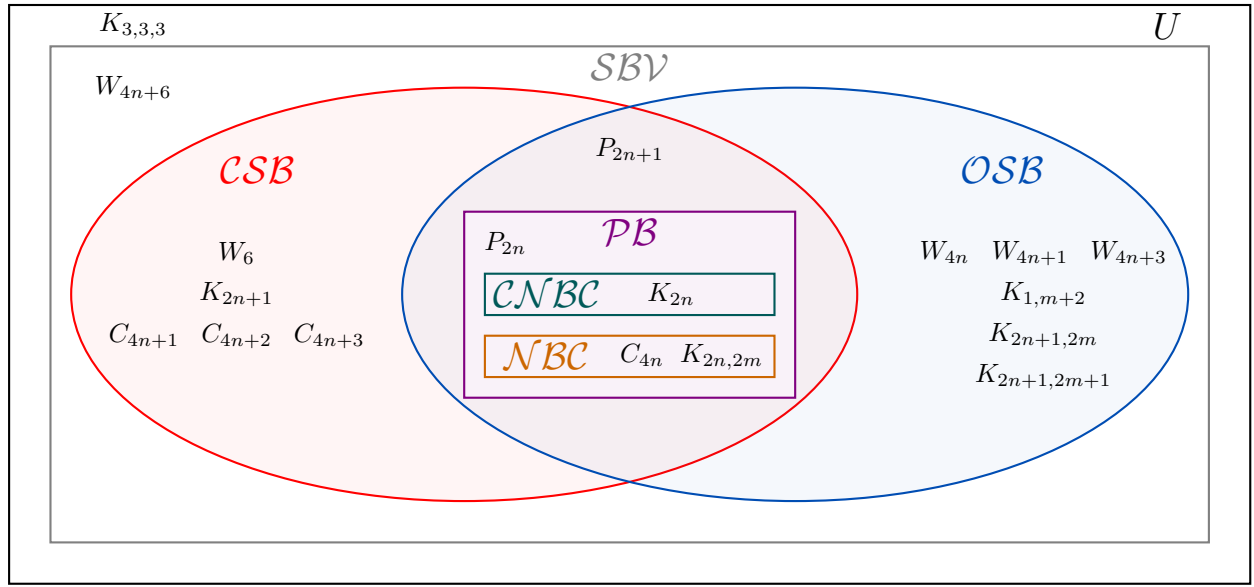
\begin{figure}[htb]
\centering
\begin{tikzpicture}[
  scale=.55,
  setbox/.style={thick},
  Ubox/.style={thick, draw=black},
  SBset/.style={setbox, draw=gray},
  CSBset/.style={setbox, draw=red},
  OSBset/.style={setbox, draw=blue!70!green},
  PBset/.style={setbox, draw=violet},
  lab/.style={font=\Large},
  expl/.style={font=\normalsize},
  NBCset/.style={thick, draw=orange!80!black},
  CNBCset/.style={thick, draw=teal!70!black}
]

\draw[Ubox] (-6,-6) rectangle (24,8);
\draw[SBset] (-5,-5) rectangle (23,7);

\draw[CSBset, fill=red, fill opacity=.04] (5,1) ellipse (9.5cm and 5cm);
\draw[OSBset, fill=blue!70!green, fill opacity=.04] (13,1) ellipse (9.5cm and 5cm);
\draw[PBset, fill=violet!5] (5,-1.5) rectangle (13,3);
\draw[NBCset]  (5.5,-1) rectangle (12.5,0);
\draw[CNBCset] (5.5,.5) rectangle (12.5,1.5);

\node[lab, text=black] at (22,7.5) {$U$};
\node[lab, text=gray] at (9,6.5) {$\SBV$};
\node[lab, text=red] at (0,4) {$\CSB$};
\node[lab, text=blue!70!green] at (18,4) {$\OSB$};
\node[lab, text=violet] at (9,2.5) {$\PB$};
\node[lab, text=orange!80!black] at (7,-.5) {$\NBC$};
\node[lab, text=teal!70!black]   at (7,1) {$\CNBC$};

\node[expl] at (-3,7.5) {$K_{3,3,3}$};

\node[expl] at (-3,6) {$W_{4n+6}$};

\node[expl] at (6,2.25){$P_{2n}$};

\node[expl] at (10,1) {$K_{2n}$};

\node[expl] at (10.5,-.5) {$C_{4n}\,\,\;K_{2n,2m}$};

\node[expl] at (9,4.5){$P_{2n+1}$};

\node[expl] at (-.5,1) {$K_{2n+1}$};
\node[expl] at (-.5,2) {$W_6$};
\node[expl] at (-.5,0)
{$C_{4n+1}\,\,\,\,\,C_{4n+2}\,\,\,\,\,C_{4n+3}$};

\node[expl] at (19,2)
{$W_{4n}\,\,\,\,\,W_{4n+1}\,\,\,\,\,W_{4n+3}$};
\node[expl] at (19,1)
{$K_{1,m+2}$};
\node[expl] at (19,0)
{$K_{2n+1,2m}$};
\node[expl] at (19,-1)
{$K_{2n+1,2m+1}$};

\end{tikzpicture}

\caption{Venn diagram of graph classes $\OSB,\CSB,\PB,\SBV$ in the universe $U$ of finite graphs with at least  three vertices and  $m,n\geq 1$. }  
\label{fig-venn}
\end{figure}

Figure~\ref{fig-venn} also contains separating examples between classes of graphs, and the  location of these separating examples  in the Venn diagram is justified by results in the remainder of this paper.
In this section, characterization results for complete graphs appear  in Proposition~\ref{complete-prop},
for complete bipartite graphs in Proposition~\ref{complete-bipartite-prop}, 
  for paths in Proposition~\ref{prop:paths}, 
  for cycles in Proposition~\ref{cycle-prop}, 
 and  for wheels in Proposition~\ref{wheel-prop}.
 The particular multipartite graph $K_{3,3,3}$ is discussed at the end of this section. 
 Section~\ref{caterpillar-sec} is dedicated to  trees and caterpillars and Section~\ref{complete-multipartite-sec}
 to complete multipartite graphs in general.
 
\begin{prop}
   If $n$ is even, then the complete graph $K_{n}$ is in $\CNBC$. If $n$ is odd and $n\ge 3$, then  $K_n$ is in $\CSB$ but not in $\OSB$.  
   \label{complete-prop}
\end{prop}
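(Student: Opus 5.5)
The statement has three parts, and I would handle each by a direct look at neighborhoods, using the fact that in $K_n$ every closed neighborhood is the whole vertex set $V(K_n)$ and every open neighborhood is $V(K_n)\setminus\{v\}$.

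\emph{$n$ even.} Color $n/2$ vertices red and $n/2$ blue. For each $v$ we have $N[v]=V(K_n)$, which contains exactly $n/2$ vertices of each color. So the coloring is $0$-balanced at every closed neighborhood and $K_n\in\CNBC$.

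\emph{$n$ odd, $n \ge 3$, membership in $\CSB$.} Color $(n+1)/2$ vertices red and $(n-1)/2$ blue. Every $N[v]$ is again all of $V(K_n)$, and the red and blue counts there differ by exactly $1$. Hence the coloring is $1$-balanced at each $N[v]$ and $K_n\in\CSB$.

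\emph{$n$ odd, exclusion from $\OSB$.} Each vertex has degree $n-1$, which is even. By Lemma~\ref{even-deg-lem}, $\beta_2(K_n)$ is even, so $K_n\in\OSB$ (that is, $\beta_2(K_n)\le 1$) would force $\beta_2(K_n)=0$. This means I only need to rule out a $(2,0)$-balanced coloring. Suppose one exists, with $r$ red and $b$ blue vertices, $r+b=n$.
\begin{itemize}
\item For a red vertex $v$, the set $N(v)$ has $r-1$ red and $b$ blue vertices, so $r-1=b$.
\item For a blue vertex $v$, the set $N(v)$ has $r$ red and $b-1$ blue vertices, so $r=b-1$.
\end{itemize}
Both colors actually occur: if, say, all vertices were red, then $N(v)$ would contain $n-1\ge 2$ red vertices and no blue ones. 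Therefore both equations must hold at once, giving $r-b=1$ and $r-b=-1$, a contradiction.

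The only step needing any care is this last one: invoking the parity lemma so that only $\lambda=0$ must be excluded, and checking that both colors appear. The rest is direct verification.
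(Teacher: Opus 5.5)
Your proof is correct and follows the paper's: the colorings for $n$ even and for $K_n\in\CSB$ are the same, and for $K_n\notin\OSB$ both arguments count colors in $N(v)=V(K_n)\setminus\{v\}$. The only difference is that you first use Lemma~\ref{even-deg-lem} to reduce to $\lambda=0$ and then compare a red vertex with a blue vertex. The paper instead works directly with $\lambda=1$ at a vertex of the minority color, which sees an imbalance of at least $2$. Your explicit exclusion of the monochromatic coloring also covers a case that the paper's argument leaves implicit.
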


\begin{proof}
If $n$ is even,  all vertex degrees are odd and the $2$-coloring in which half the vertices are red and the other half are blue  is $0$-balanced at every closed neighborhood.  Thus in this case, $K_n \in \CNBC$. If $n$ is odd, then the coloring in which $\lceil \frac{n}{2} \rceil$ vertices are red and $\lfloor \frac{n}{2} \rfloor$ vertices are blue is a CSB coloring.  However, there is no OSB coloring of $K_n$ for $n$ odd because in any such coloring there would be more vertices of one color (say red) than the other, and then the coloring would not be $1$-balanced at $N(v)$  for a blue vertex $v$.
\end{proof}

Proposition~\ref{complete-bipartite-prop} follows from our results on complete multipartite graphs in Section~\ref{complete-multipartite-sec}, but we provide a short direct proof here.

\begin{prop}
   The complete graph bipartite graph $K_{n,m}$ is in $\OSB$ for all $n,m \ge 1$.  If both $n$ and $m$ are even then $K_{n,m} \in \NBC$.  If one or both of $n,m$ is odd and $K_{n,m} \not\in \{ K_{1,1}, K_{1,2}\}$,  then $K_{n,m} \not\in \CSB$.  
  
   \label{complete-bipartite-prop}
\end{prop}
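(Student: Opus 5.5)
Let the bipartition of $K_{n,m}$ be $X \cup Y$ with $|X| = n$ and $|Y| = m$. Every $x \in X$ has $N(x) = Y$, and every $y \in Y$ has $N(y) = X$. So a $2$-coloring is $\lambda$-balanced at every open neighborhood exactly when the color counts within $X$ differ by at most $\lambda$ and the color counts within $Y$ differ by at most $\lambda$.

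For the first two claims I will color $\lceil n/2\rceil$ vertices of $X$ red and $\lfloor n/2\rfloor$ blue, and split $Y$ the same way. The counts in $N(x)=Y$ and in $N(y)=X$ then differ by at most $1$, so $K_{n,m}\in\OSB$. When $n$ and $m$ are both even, both splits are exact, every open neighborhood is $0$-balanced, and $K_{n,m}\in\NBC$.

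For the non-CSB claim, suppose without loss of generality that $n$ is odd, and fix a CSB coloring. Let $r_X,b_X$ be the numbers of red and blue vertices in $X$, and define $r_Y,b_Y$ similarly. For $y\in Y$ we have $N[y]=X\cup\{y\}$. Because $n$ is odd, $r_X \ne b_X$; say $r_X > b_X$. If $r_X - b_X \ge 3$, every $N[y]$ is unbalanced by at least $2$, which is impossible. So $r_X = b_X + 1$. If some $y$ were red, then $N[y]$ would have imbalance $2$, so every vertex of $Y$ is blue, giving $r_Y = 0$ and $b_Y = m$.

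Now I examine $N[x]=Y\cup\{x\}$ for $x\in X$. A red $x$ gives counts $1$ red and $m$ blue, so we need $m-1\le 1$, i.e.\ $m\le 2$. A blue $x$ (which exists as soon as $b_X\ge1$, i.e.\ $n\ge 3$) gives counts $0$ red and $m+1$ blue, so we need $m+1\le1$, which is impossible since $m\ge1$. Hence $n\ge3$ is a contradiction, and the remaining case is $n=1$. Then $X$ is a single red vertex, $Y$ is all blue, and $m\le 2$, so the graph is $K_{1,1}$ or $K_{1,2}$, which are excluded.

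If instead $m$ is odd, the same argument with $X$ and $Y$ swapped gives $K_{m,n}\in\{K_{1,1},K_{1,2}\}$ up to relabeling, i.e.\ the excluded graphs $K_{1,1}$ and $K_{2,1}=K_{1,2}$. The main care needed is in this small-case bookkeeping: checking that the symmetric case yields exactly the exceptions listed and nothing else.
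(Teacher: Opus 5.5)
Your proof is correct and follows essentially the paper's route: the same near-even split of each side for OSB and NBC, and for the CSB claim the same chain in which the imbalance of the odd side $X$ forces all of $Y$ to be blue, after which some $N[x]$ with $x \in X$ fails. Pinning down $r_X = b_X + 1$ and then splitting on whether $X$ contains a blue vertex (instead of the paper's normalization $n \le m$ and split on $m$) is slightly cleaner, and it sidesteps a loose step in the paper's $m=2$ case, where three red vertices in $X$ do not by themselves unbalance $N[y]$ (e.g.\ three red and two blue in $X$).
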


\begin{proof}
    Let $X \cup Y$ be a bipartition of $K_{n,m}$ where $|X| = n$ and $|Y| = m$.  The coloring in which $\lfloor \frac{n}{2}\rfloor$ vertices of $X$ and $\lfloor \frac{m}{2}\rfloor$ vertices of $Y$  are red and the remaining vertices are blue shows that $K_{n,m} \in \OSB$  for  all $n,m \ge 1$ and $K_{n,m} \in \NBC $ when $n$ and $m$ are both even.   Now consider the case in which one or both of $n,m$ are odd and $K_{n,m} \not\in \{ K_{1,1}, K_{1,2}\}$.  Without loss of generality, assume $n$ is odd and if $n$ and $m$ are both odd, assume $n \le m$, so $m \ge 2$.  For a contradiction,  suppose there exists a CSB coloring of $K_{n,m}$ and assume that there are more red vertices in $X$ than blue vertices. 
    Since the coloring is $1$-balanced at $N[y]$ for each $y \in Y$, we know that all vertices in $Y$ are blue. If $m \ge 3$ then  the coloring is not $1$-balanced at $N[x]$ for any   vertex $x \in X$.  If $m=2$ then $n \ge 3$.  If $X$ contains at least three red vertices, then the coloring is not $1$-balanced at $N[y]$ for any   $y \in Y$,  and otherwise, $X$ contains a blue vertex $v$ and the coloring is not $1$-balanced at $N[v]$.  Each instance leads to a contradiction. 
\end{proof}

For $n \ge 3$, the path $P_n$ contains both even and odd degree vertices, and  therefore $P_n \not\in (\NBC \cup \CNBC)$.  Hence, the next result situates paths in the Venn diagram of Figure~\ref{fig-venn}.

\begin{prop} The path $P_n$ is in $\CSB \cap \OSB$ for all $n$, and $P_{n} \in \PB$ if and only if $n $ is even or $n=1$.
\label{prop:paths} 
\end{prop}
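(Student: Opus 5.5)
Label the vertices $v_1, v_2, \ldots, v_n$ consecutively along $P_n$. The endpoints have degree $1$ and the internal vertices have degree $2$. The argument has three parts: OSB and CSB colorings for every $n$, a PB coloring for $n$ even and $n=1$, and a forced-coloring contradiction for $n \ge 3$ odd.

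\emph{Membership in $\CSB \cap \OSB$.} When $n$ is even, it suffices to exhibit a PB coloring, since Proposition~\ref{prop:contain}\ref{prop:containment-02} gives $\PB \subseteq \CSB \cap \OSB$. When $n$ is odd, I give explicit colorings.
\begin{itemize}
\item OSB: color in the pattern $RRBBRRBB\cdots$ (period $4$), truncated to $n$ vertices. Each internal vertex $v_i$ has the two neighbors $v_{i-1}, v_{i+1}$, which are two apart in the pattern and hence have different colors, so $N(v_i)$ is $0$-balanced. An endpoint has $|N(v)| = 1$, which is automatically $1$-balanced.
\item CSB: use the same period-$4$ pattern shifted by one, $RBBRRBBRR\cdots$, or equivalently check windows of three consecutive vertices. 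Any three consecutive vertices of a period-$4$ pattern with blocks of two contain both colors, so each internal $N[v_i]$ (size $3$) is $1$-balanced. An endpoint has $|N[v]| = 2$, which is always $1$-balanced.
\end{itemize}
Alternatively, alternating $RBRB\cdots$ gives CSB directly, since every closed neighborhood of size $3$ is then $RBR$ or $BRB$ and every endpoint pair is $RB$.

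\emph{$P_n \in \PB$ for $n$ even and for $n = 1$.} For $n=1$ the single vertex has degree $0$, which is even, and $N(v) = \emptyset$ is trivially $0$-balanced. For $n$ even, the PB conditions are as follows.
\begin{itemize}
\item Each endpoint (odd degree) must differ in color from its unique neighbor.
\item Each internal vertex $v_i$ (even degree) needs $v_{i-1}$ and $v_{i+1}$ to have different colors.
\end{itemize}
The pattern $RBBRRBBR\cdots$ satisfies both. Because it has period $4$ with blocks of two, vertices two apart always differ, which handles the internal vertices. The first two vertices are $R,B$, so they differ. Since $n$ is even, $n \equiv 0$ or $2 \pmod 4$, and in either case the last two vertices are $v_{n-1}, v_n = BR$ or $RB$, so they differ as well.

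\emph{$P_n \notin \PB$ for $n \ge 3$ odd.} This is the main step. The internal condition gives $c(v_{i+1}) \ne c(v_{i-1})$ for $2 \le i \le n-1$, so the odd-indexed vertices alternate in color, and so do the even-indexed vertices. Without loss of generality $c(v_1) = R$. The endpoint condition forces $c(v_2) = B$. The alternation then gives
\[
c(v_{2j+1}) = R \text{ iff } j \text{ even}, \qquad c(v_{2j}) = B \text{ iff } j \text{ odd}.
\]
For $n = 2m+1$, this yields $c(v_n) = R$ iff $m$ is even, and $c(v_{n-1}) = c(v_{2m}) = B$ iff $m$ is odd. In both cases $c(v_n) = c(v_{n-1})$, which violates the condition at the endpoint $v_n$.

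The main care needed is this parity bookkeeping, together with checking that the small cases ($n=3$) and the degenerate case $n=2$, where both vertices are endpoints, fit the argument. For $n=2$ the coloring $RB$ works.
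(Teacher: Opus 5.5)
\textbf{One step in your $\CSB$ construction is false.} You assert that an endpoint's closed neighborhood, having size $2$, ``is always $1$-balanced.'' That holds for the open neighborhood (size $1$), but not for the closed one. When $|N[v]|=2$, the two color counts differ by $0$ or $2$, so $N[v]$ is $1$-balanced only when $v$ and its neighbor have different colors.

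Both of your period-$4$ patterns fail this test:
\begin{itemize}
\item $RRBB\cdots$ fails already at $v_1$.
\item The shifted pattern $RBBRRBBRR\cdots$ ends in $RR$ or $BB$ for every odd $n\ge 3$.
\end{itemize}
The second failure is unavoidable. That pattern is $0$-balanced at every internal $N(v_i)$ and at $N[v_1]$. So if it were also $1$-balanced (hence $0$-balanced) at $N[v_n]$, it would be a PB coloring, contradicting your own third part. Drop the period-$4$ $\CSB$ claim and rely only on the alternating coloring $RBRB\cdots$. You justify that coloring correctly, and it is exactly the $\CSB$ coloring the paper uses for all $n$.

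With that correction, the rest of your proof is correct and follows the paper. The paper's PB/OSB coloring is the same $RBBRRBBR\cdots$ pattern ($v_i$ red iff $i\equiv 0,1 \pmod 4$). Its converse is the same forced propagation from $v_1$, showing $c(v_{n-1})=c(v_n)$ when $n$ is odd. Getting $\CSB\cap\OSB$ for even $n$ via Proposition~\ref{prop:contain} is a harmless variant. Your explicit check of $n=1$ (a degree-$0$ vertex with empty open neighborhood) covers a case the paper passes over.
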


\begin{proof}
Label the vertices in the path $P_n$ consecutively as $v_1, v_2, \ldots , v_{n-1}, v_n$. The $2$-coloring in which $v_i$ is red if $i$ is odd and $v_i$ is blue if $i$ is even is  a CSB coloring.  The $2$-coloring in which $v_i$ is red if $i \equiv 1,4 \pmod 4$  and $v_i$ is blue if $i \equiv 2,3 \pmod 4$ is an OSB coloring for all $n$, and a PB coloring when $n$ is even.  Therefore, $P_n \in \CSB \cap \OSB$ for all $n$, and $P_n \in \PB$ when $n$ is even.

It remains to show that if $P_n \in \PB$ then $n$ is even.  Suppose that $P_{n} \in \PB$   and fix a PB coloring of $P_n$.    Without loss of generality, we may assume that  $v_1$ is red.  
Since $\deg(v_1) = 1$, the coloring is $0$-balanced at  $N[v_1]$, and  therefore $v_2$ must be blue.  Next, since $\deg(v_2) = 2$, the coloring is $0$-balanced at $N(v_2)$, so  $v_3$ must also be blue.  Continuing in this way for $1 \le j \le n$, it follows that $v_j$ has the same color as $v_{j-1}$ if $j$ is odd and the opposite color if $j$ is even.
Since vertex $v_n$ has degree $1$, the coloring is $0$-balanced at $N[v_n]$, so $v_n$  must be the opposite color from $v_{n-1}$, and therefore $n$ is even. 
\end{proof}

For cycles $C_n$, every vertex has degree 2, so 
by Observation~\ref{even-degree-obs}, we know that $C_n \in \OSB$ if and only if $C_n \in \NBC$.   Therefore, the next result situates cycles in the Venn diagram of Figure~\ref{fig-venn}. 

\begin{prop} The cycle $C_n$ is in  $\CSB$ for all $n$ and 
is in $\OSB$ if and only if $n$ is a multiple of $4$.  
\label{cycle-prop}
\end{prop}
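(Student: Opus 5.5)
We have to prove two things: every cycle $C_n$ has a CSB coloring, and $C_n \in \OSB$ exactly when $4 \mid n$. By Observation~\ref{even-degree-obs}, the second part is the same as saying $C_n \in \NBC$ if and only if $4 \mid n$. That fact is proved in \cite{FM24} and was already used in the proof of Proposition~\ref{tight-prop}. To keep things self-contained, we also give a direct argument.

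Label the vertices $v_1, \ldots, v_n$ consecutively, with indices taken mod $n$.

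\emph{CSB for all $n$.} Every closed neighborhood has three vertices, so a CSB coloring is one in which no three consecutive vertices $v_{i-1}, v_i, v_{i+1}$ share a color.
\begin{itemize}
\item If $n$ is even, alternate red and blue all the way around.
\item If $n$ is odd, alternate red and blue on $v_1, \ldots, v_{n-1}$ (starting with red at $v_1$, so $v_{n-1}$ is blue), and color $v_n$ blue.
\end{itemize}
In the odd case, the only places where two consecutive vertices share a color are the pairs $v_{n-1}v_n$ (both blue) and $v_nv_1$ (blue, red). So the only triples to check are $v_{n-2}, v_{n-1}, v_n$, which is red, blue, blue, and $v_{n-1}, v_n, v_1$, which is blue, blue, red. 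Neither is monochromatic. For $n = 3$, the coloring red, blue, blue works directly.

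\emph{OSB implies $4 \mid n$.} Every vertex has degree $2$, so by Lemma~\ref{even-deg-lem} an OSB coloring is automatically $0$-balanced at each $N(v_i)$. That means $v_{i-1}$ and $v_{i+1}$ always receive different colors.
\begin{itemize}
\item Suppose $n$ is odd. Then the map $i \mapsto i+2$ generates all of $\mathbb{Z}_n$, so the colors alternate along a single cycle of odd length $n$. This is a contradiction.
\item Suppose $n = 2m$. The even-indexed and odd-indexed vertices each form a cycle of length $m$ under $i \mapsto i+2$, and the colors must alternate along each one. This forces $m$ to be even, so $4 \mid n$.
\end{itemize}

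\emph{$4 \mid n$ implies OSB.} Color $v_i$ red if $i \equiv 0, 1 \pmod 4$ and blue otherwise. This gives the pattern RRBB repeated, and it is consistent around the cycle because $4 \mid n$. In this pattern $v_{i-1}$ and $v_{i+1}$ are always two positions apart, hence have opposite colors. So the coloring is $0$-balanced at every $N(v_i)$.

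The step that needs the most care is the parity argument in the odd-$n$ case, and it is short.
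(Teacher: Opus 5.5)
Your proof is correct and follows the paper's approach: an essentially alternating red/blue coloring gives the CSB coloring, and the OSB part reduces to the NBC characterization of cycles from \cite{FM24} via Observation~\ref{even-degree-obs}. The only difference is that you also prove that cited fact directly, by observing that an OSB coloring of $C_n$ must properly $2$-color the orbits of $i \mapsto i+2$; this makes your version self-contained where the paper simply cites the result.
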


\begin{proof} 
Let $C_n$ be the cycle with vertices labeled consecutively as  $v_1, v_2, \ldots v_n$.  To see that $C_n$ is in $\CSB$ color vertex $v_i$ red if $i$ is odd and blue if $i$ is even. As a result, there are no three consecutive vertices of the same color, so the coloring is $\CSB$.

In \cite{FM24} it is shown that 
the cycle $C_n$ is in $\NBC$ if and only if $n$ is a multiple of $4$.  Therefore, by Observation~\ref{even-degree-obs}, the cycle $C_n$ is in $\OSB$ if and only if $n$ is a multiple of $4$.
\end{proof}

\begin{prop}
Let $W_n$  be the wheel consisting of a cycle $C_n$ and a central vertex  adjacent to every vertex on the cycle.
\begin{itemize}
\item  If  $n \ge 4$ and $n \equiv 1, 3, 4 \pmod 4$ then $W_n \in \OSB$ and $W_n \not\in \CSB$

 \item If   $n >6$ and $n \equiv 2 \pmod 4$ then   $W_n \in \SBV$ and $W_n \not\in (\OSB \cup \CSB)$.
  \item  If $n=6$ then $W_6 \in \CSB$ and $W_6 \not\in \OSB$  
\end{itemize}
  
\label{wheel-prop}
\end{prop}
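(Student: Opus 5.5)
The plan is to reduce everything to explicit local conditions on the wheel and then treat the three cases separately. Let $h$ be the hub and $v_1,\dots,v_n$ the rim, indices mod $n$. Each rim vertex has degree $3$, with $N(v_i)=\{v_{i-1},v_{i+1},h\}$. The hub has $N(h)=\{v_1,\dots,v_n\}$.

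\textbf{Translating the conditions.}
\begin{itemize}
\item \emph{Rim, open.} Since $|N(v_i)|=3$, being $1$-balanced at $N(v_i)$ just means $N(v_i)$ is not monochromatic. If $h$ is red, this says $v_{i-1},v_{i+1}$ are not both red.
\item \emph{Rim, closed.} Since $|N[v_i]|=4$ is even, being $1$-balanced at $N[v_i]$ forces exactly two red and two blue, as in Lemma~\ref{even-deg-lem}.
\item \emph{Rim, SBV.} If $N(v_i)$ is monochromatic then $N[v_i]$ is $3$--$1$ or $4$--$0$. So in an SBV coloring the rim condition is exactly the open one.
\item \emph{Hub.} The condition is a bound on the rim colour counts, shifted by one in the closed case.
\end{itemize}
Throughout I would use the ``distance-two graph'' $D_n$ on the rim, where $v_{i-1}\sim v_{i+1}$. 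With $h$ red, the rim condition says the red rim vertices form an independent set in $D_n$. For $n$ odd, $D_n$ is a single $n$-cycle. For $n$ even, it is two disjoint $C_{n/2}$'s.

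\textbf{Non-membership in $\CSB$.} Suppose $h$ is red in a CSB coloring. The $2$--$2$ rim condition says every three consecutive rim vertices contain exactly one red. Hence red occurs every third vertex, which forces $3\mid n$ and exactly $n/3$ red rim vertices. At the hub this gives $n/3+1$ red against $2n/3$ blue, so we need $n/3-1\le 1$, i.e.\ $n\le 6$. Thus for $n\ge 4$ we get $W_n\in\CSB$ only if $n=6$. Conversely, the coloring with $h$ red and rim $RBBRBB$ is CSB, so $W_6\in\CSB$. This handles the $\CSB$ parts of all three bullets.

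\textbf{Non-membership in $\OSB$ for $n\equiv 2\pmod 4$ (including $n=6$).} Here $|N(h)|=n$ is even, so an OSB coloring must have exactly $n/2$ red rim vertices. With $h$ red (the blue case is symmetric), these must be independent in $D_n$. But $D_n$ is two odd cycles $C_{n/2}$, so its independence number is $2\cdot\frac{n/2-1}{2}=\frac{n-2}{2}<\frac n2$. This is a contradiction.

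\textbf{Constructions.} For the OSB memberships, colour $h$ red and colour the rim as follows.
\begin{itemize}
\item $n=4k$: the rim is $(BBRR)^k$.
\item $n=4k+1$: the rim is $(BBRR)^kB$.
\item $n=4k+3$: the rim is $(BBRR)^kBBR$.
\end{itemize}
In each case the red and blue rim counts differ by at most one. I would check by inspection, paying attention to the wraparound, that no rim vertex has two red rim neighbours.

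For SBV with $n=4k+2>6$, colour $h$ red and the rim $(BBRR)^kBB$. This has $2k=n/2-1$ red rim vertices, which meets the independence bound above exactly. At the hub the closed count is $n/2$ red against $n/2+1$ blue, so $h$ is $1$-balanced at $N[h]$. Every rim vertex is open-balanced, again by direct check of the pattern and its wraparound.

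The main obstacle I expect is the $\OSB$ non-membership for $n\equiv 2\pmod 4$. The decomposition of $D_n$ into two odd cycles is the key idea, and the same count also shows the SBV construction is tight. The remaining work is bookkeeping with the periodic patterns at the seam where the pattern wraps around.
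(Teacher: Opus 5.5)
Your proposal is correct, and its overall structure matches the paper's: explicit colorings for the memberships, plus two obstruction arguments. Your OSB colorings are in fact the paper's colorings with red and blue swapped.

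The real difference is the proof that $W_n$ is not OSB when $n\equiv 2\pmod 4$. The paper argues directly on the cycle. With the hub blue there are exactly $2t+1$ blue rim vertices, and these occur in runs of length at most two, hence in at least $t+1$ runs. Each gap between runs needs at least two red vertices, which would require $2t+2$ reds. You instead encode the rim condition as ``rim vertices of the hub's color are independent in the distance-two graph $D_n$.'' The contradiction then reads off from $\alpha(D_n)=\alpha(2C_{n/2})=(n-2)/2<n/2$.

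The two arguments use the same constraint, but your formulation is more structural. Comparing $\alpha(D_n)$ with the hub requirement shows at once why $n\equiv 2\pmod 4$ is the only bad residue for OSB. For odd $n$, $\alpha(C_n)=(n-1)/2$ is enough, and for $4\mid n$, $\alpha(D_n)=n/2$ is enough. It also shows that your SBV coloring, with $n/2-1$ rim vertices of the hub's color, is extremal. Indeed that count is forced in every SBV coloring, since the hub must then be closed-balanced. The paper's cluster count is more elementary but tailored to the one case.

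For CSB, your condition ``exactly one red among every three consecutive rim vertices'' is the paper's propagation argument in another form. Double counting the $n$ windows gives $3r=n$ directly, which is a clean way to justify your period-$3$ step.

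The colorings also differ slightly. The paper uses a single modified-tail coloring that is SBV for all $n\equiv 2\pmod 4$ and is also CSB at $n=6$. You use the truncated pattern $(BBRR)^kBB$ for SBV and a separate coloring $RBBRBB$ (hub red) for $W_6$. Both of your colorings check out, including at the wraparound.
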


\begin{proof}

    Let $W_n$ be the wheel with  central vertex $v$ and the vertices  on the cycle labeled consecutively as  $v_1, v_2, \ldots v_n$.  We begin by showing  the set containments  in the statement of the proposition by constructing  $2$-colorings of $W_n$.

 First we consider $n \equiv 1, 3, 4 \pmod 4$  and provide an OSB coloring of $W_n$.  Color $v$ blue, and  for $1 \le j \le n$, color $v_j$ red if $j \equiv 1, 2 \pmod 4$ and blue if $j \equiv  3, 4 \pmod 4$.   
For $j \in \{2,3,  \ldots, n-1\}$, vertex $v_j$ has one red and one blue neighbor on the cycle as well as the blue neighbor $v$, so the coloring is $1$-balanced at $N(v_j)$.  Vertex $v_1$ has red neighbor $v_2$ and blue neighbor $v$, so the coloring is $1$-balanced at $N(v_1)$ regardless of the color of $v_n$.  Similarly, vertex $v_n$ has red neighbor $v_1$ and blue neighbor $v$, so  the coloring is $1$-balanced at $N(v_n)$.  Vertex $v$ has an equal number of red and blue neighbors if $n \equiv 4 \pmod 4$ and  one extra red neighbor if $n \equiv 1,3 \pmod 4$, so the coloring is $1$-balanced at $N(v)$. Thus our coloring of $W_n$ is an  OSB coloring and $W_n \in \OSB$.

   Next we  consider $n \equiv 2 \pmod 4$  and construct a SBV coloring for $W_n$ that is a CSB coloring when $n=6$.     Color the central vertex $v$ blue,  and color  $v_j$ red if $j \equiv 1, 2 \pmod 4$ and blue if $j \equiv  3, 4 \pmod 4$ for $1 \leq j \leq n-4$.  Color $v_{n-3}$ and $v_{n}$ blue and color $v_{n-2}$ and $v_{n-1}$ red. Observe that $v$ has $\frac{n}{2} + 1$ red neighbors and $\frac{n}{2} - 1$ blue neighbors, therefore the coloring is $1$-balanced at $N[v]$.   One can check that the coloring is also  $1$-balanced at $N(v_j)$ for $1 \le j \le n$, so it is an SBV coloring and $W_n \in \SBV$.  When $n=6$ this coloring is in fact $1$-balanced at $N[v_j]$ for $1 \le j \le n$, so it is a CSB coloring and $W_6 \in \CSB$.
   
   It remains to show the non-containments in the proposition.  We first show that if $n \equiv 2 \pmod 4$  then $W_n \not\in \OSB$.   Write $n = 4t+2$, where $t$ is a positive integer,  and for a contradiction, assume that $W_n$ is in $\OSB$.  Fix an OSB coloring of $W_n$ and, without loss of generality, we may assume that $v$ is blue.  Since the coloring is $1$-balanced at $N(v)$, we know that $N(v)$ contains  exactly $2t+1$ red vertices and $2t+1$ blue vertices.  There cannot be three consecutive blue vertices along the cycle because the middle vertex of the three would have three blue and no red vertices in its open neighborhood.  Thus, the $2t+1$ blue vertices along the cycle appear in clusters of size $1$ or $2$ vertices, and hence there are at least $t+1$ such blue clusters.  Red vertices must appear between these blue clusters.  There cannot be a single red vertex between two clusters of blue vertices because such a vertex would have three blue vertices and no red vertices in its open neighborhood.  Thus, at least two red vertices appear in each gap between clusters of the blue vertices.  Since there are at least $t+1$ clusters of blue vertices, there are at least $t+1$ gaps between them which must be filled by at least $2t+2$ red vertices.  This is a contradiction since there are only $2t+1$ red vertices.
   
  Finally, we show that if $n \ge 4$ and $W_n \in \CSB$ then $n=6$.
   Suppose that $W_n \in \CSB$ and fix a CSB coloring of $W_n$.  Without loss of generality we may assume that $v$ is blue, and since there must be both red and blue vertices among the remaining vertices, we may also  assume that $v_1$ is blue and $v_2$ is red.  
Each blue vertex on the cycle must have two red neighbors on the cycle, and each red vertex must have a red and a blue neighbor on the cycle. Thus working consecutively around the cycle starting at $v_1$ and $v_2$ we see that  $v_j$ is blue if $j \equiv 1 \pmod 3$ and $v_j$ is red if $j \equiv 2,3 \pmod 3$.  Since $v_1$ is blue  and $v_2$ is red, we know $v_n$ is red and therefore, $v_{n-1}$ is also red, so $n \equiv 3 \pmod 3$. The coloring is $1$-balanced at $N[v]$, so $n < 9$ and hence $n=6$.  
\end{proof}

In Theorem~\ref{sbv-cmp} we characterize the complete multipartite graphs that are in $\SBV$.
The graph $K_{3,3,3}$ has an odd number of vertices, no singleton parts, and $3$ non-singleton odd parts, thus  Theorem~\ref{sbv-cmp} implies that $K_{3,3,3} \not\in\SBV$. This justifies the  placement  of $K_{3,3,3}$ in   Figure~\ref{fig-venn}, and indeed  $K_{3,3,3}$ is not in any of our neighborhood $\lambda$-balanced families for $\lambda\leq 1$.

\section{Caterpillars and trees}

\label{caterpillar-sec}


In this section, we examine the class of trees beyond paths and prove in Theorem~\ref{trees-OSB-Thm} that every tree belongs to $\OSB$. Thus, for any tree $T$, we have $\beta_2(T) \le 1$ and by Lemma~\ref{balance-lemma} parts \ref{bal-lem-pt2} and \ref{bal-lem-pt4} we know $\beta_2([T]) \le \beta_2[T] \le 2$. 
Within the class $\OSB$, some trees are members of $\PB$, others are in $\CSB$ but not $\PB$, and still others are not in $\CSB$.  We analyze the possibilities completely for caterpillars in Theorems~\ref{caterpillar-pb} and \ref{CSB-caterpillar}.  Recall that a caterpillar consists of a path $P$ (sometimes called the spine) and vertices of degree $1$ adjacent to  vertices of $P$.

\begin{thm}
All trees are in $\OSB$. 
    \label{trees-OSB-Thm}
\end{thm}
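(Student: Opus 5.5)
We prove the theorem by induction on the number of vertices, using a rooted construction in which the coloring is chosen greedily from the root downward. Root the tree $T$ at a vertex $r$ and process vertices in breadth-first order. When we reach a vertex $v$, the colors of $v$ and of its parent $p(v)$ (if any) are already fixed. We then color the children of $v$ so that $N(v)$ is $1$-balanced. Every vertex has its open neighborhood equal to the parent together with the children, and the children are colored only when their parent is processed. So each vertex's neighborhood is determined at exactly one step, namely when we process that vertex itself, and later choices never disturb it.

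The key step is the local choice. Suppose $v$ has $d$ children and the parent's color is already fixed (say red, if $v\neq r$). We must color the $d$ children so that the red and blue counts in $\{p(v)\}\cup\{\text{children}\}$ differ by at most $1$. This is always possible: color $\lfloor (d+1)/2\rfloor - 1$ of the children red and the rest blue. Then the $d+1$ neighbors split as $\lfloor (d+1)/2\rfloor$ red and $\lceil (d+1)/2\rceil$ blue, which is a difference of at most $1$. The count $\lfloor (d+1)/2\rfloor-1$ is nonnegative as long as $d\ge 1$. If $d=0$, then $N(v)=\{p(v)\}$ and the difference is $1$, which is fine. For the root, split its children as evenly as possible.

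The construction never constrains the color of $v$ relative to its neighbors, because open-neighborhood balance does not involve $v$'s own color. This is exactly why the open version works for all trees while the closed version fails for some trees. Since there are no cycles, no vertex ever receives conflicting demands, and the resulting $2$-coloring is $1$-balanced at every $N(v)$. Hence $\beta_2(T)\le 1$ and $T\in\OSB$.

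I expect no serious obstacle. The only point needing care is to state clearly that each vertex's neighborhood is fixed at a single step, which uses acyclicity. One could phrase the argument equivalently as an induction that deletes a leaf. That version is less clean, however, because re-adding the leaf can unbalance its neighbor. The top-down greedy construction avoids this issue, so I would present that version.
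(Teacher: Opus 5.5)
Your proof is correct, and it rests on the same idea as the paper's; the one real misstep is your reason for setting aside the leaf-deletion induction, which is exactly what the paper uses. Your construction works because each vertex's open neighborhood is its parent together with its children. All of their colors are fixed by the time that vertex is processed and never change afterward. Your choice of $\lfloor (d+1)/2\rfloor - 1$ red children then yields a split of $\lfloor (d+1)/2\rfloor$ versus $\lceil (d+1)/2\rceil$.

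The paper's proof goes as follows. Delete a leaf $v$ with neighbor $x$ and take an OSB coloring of $T-v$. Without loss of generality $N_{T-v}(x)$ has at least as many red as blue vertices, so color $v$ blue. Then $N(x)$ stays $1$-balanced, $N(v)=\{x\}$ is trivially $1$-balanced, and no other neighborhood changes. Your claim that re-adding the leaf can unbalance its neighbor is therefore unfounded. The leaf's color is free, so you give it the minority color of $N(x)$, or either color in case of a tie.

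Both arguments use the same observation: a leaf's color affects only its neighbor's open neighborhood. Your top-down version is essentially that induction unrolled along a BFS order, coloring all children of a vertex at once rather than one leaf at a time. The paper's version is shorter and needs no rooting or bookkeeping about when each neighborhood becomes final. Yours presents the same greedy procedure as a single explicit pass over the tree.

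One small point of presentation: you announce an induction on the number of vertices but then give a direct construction. Either drop the word induction or phrase the BFS steps as the induction, and state explicitly that the root receives an arbitrary color.
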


\begin{proof}
     We proceed by induction on the number of vertices.  For our base case, note that $P_1 \in \OSB$.  Assume all trees on $n$ vertices are  in $\OSB$ for some $n \ge 1$.  Let $T$ be a tree on $n+1$ vertices, let $v$ be a leaf of $T$ and let $x$ be its unique neighbor.  Then $T-v$ is a tree on $n$ vertices, so by our induction hypothesis it is in $\OSB$.  Fix an OSB coloring of $T-v$.   Without loss of generality, we may assume that $N(x)$ contains  at least as many red vertices as blue in $T-v$.  Color $v$ blue.  The new coloring is $1$-balanced at $N(y)$ for all $y \in V(T)$.  Thus $T \in \OSB$ and the result follows by induction.
\end{proof}

We will see in Theorems~\ref{caterpillar-pb} and \ref{CSB-caterpillar}
that in constructing a  PB or CSB coloring  of a caterpillar $G$, there are restrictions on the colors assigned to vertices on the spine.  The next result shows that any $2$-coloring of a path $P$ can be extended to a caterpillar in $\CNBC$.  In the resulting caterpillar, the spine  may be longer than $P$.

\begin{prop}
Any $2$-coloring of a path $P$  can be extended, by adding leaves adjacent to vertices on $P$, to a $2$-coloring of a caterpillar that is $0$-balanced at every closed neighborhood.   
    \label{path-make-CNB-prop}
\end{prop}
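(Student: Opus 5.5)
Let $P = v_1 v_2 \cdots v_n$ carry an arbitrary red/blue coloring. The plan is to attach leaves to each spine vertex so that every closed neighborhood becomes $0$-balanced. We then check that the new leaves are also $0$-balanced at their own closed neighborhoods.

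The key observation concerns a leaf $\ell$ attached to $v_i$. We have $N[\ell] = \{\ell, v_i\}$, which is $0$-balanced exactly when $\ell$ receives the color opposite to $c(v_i)$. So every added leaf at $v_i$ is forced to be the opposite color of $v_i$, and each such leaf automatically satisfies the closed balance condition. It remains to balance $N[v_i]$ for each spine vertex.

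Fix $v_i$ and let $r_i$ and $b_i$ be the numbers of red and blue vertices in $N_P[v_i]$, the closed neighborhood within the path. This set contains $v_i$ itself, so one color is $c(v_i)$. Adding $a_i$ leaves of the opposite color increases only the count of that opposite color. If the opposite color is already in the majority or tied, adding leaves cannot balance $N[v_i]$. This is the only obstacle.

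In the bad case $v_i$'s own color is strictly in the minority or tied in $N_P[v_i]$. This happens only when $v_i$ is, say, red and at least one path neighbor is blue, so that $b_i \ge r_i$. To fix it, we first attach a new pendant vertex, or extend the path, with a vertex of $v_i$'s color. The simplest version is to extend the spine at the ends. For interior spine vertices we cannot extend the path, but we can attach a leaf $u$ to $v_i$ of the same color as $v_i$. Then $u$ is not balanced: $N[u] = \{u, v_i\}$ is monochromatic. We repair this by attaching to $u$ a leaf $w$ of the opposite color, which makes $u$ balanced if we give it that one extra leaf. Then $w$'s closed neighborhood is $\{w,u\}$, which is balanced. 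Now $u$ has degree $2$, so the result is no longer a caterpillar unless $u$ lies on the spine. This is why the statement allows the spine to be longer than $P$: only the endpoints can be extended this way.

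So the plan must be adjusted. The hard step is handling interior vertices whose color is in the minority. I would try using the end extensions together with the freedom to add leaves to every spine vertex. Note that each $v_i$ has at most two path neighbors, so the imbalance in $N_P[v_i]$ favoring the opposite color is at most $1$. This happens exactly when both path neighbors have the opposite color, giving counts $1$ versus $2$.

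Here I would reinterpret the problem. Leaves of $v_i$'s own color are also allowed, as long as they are themselves balanced. A leaf of $v_i$'s color has a monochromatic closed neighborhood, so this fails. Hence for interior vertices with both path neighbors of the opposite color, no leaf-only extension works.

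Therefore I expect the argument must verify that this case cannot block the construction. Perhaps "any $2$-coloring" is meant with extension at the ends only. Alternatively, I would first try to check small cases, such as red between two blues, to see whether the statement intends the path to be re-embedded as part of a longer spine. In that reading, one inserts the required vertices at the ends, and the given path sits inside the spine. For each remaining vertex one sets $a_i = (\text{own-color count}) - (\text{opposite count}) \ge 0$ forced leaves, so that $N[v_i]$ becomes balanced. The main obstacle is resolving the minority-color interior vertices.
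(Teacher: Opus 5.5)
You have actually found a counterexample, but you do not commit to it, so the proposal ends with neither a proof nor a disproof. Your expectation that ``the argument must verify that this case cannot block the construction'' cannot be met, because the case occurs and does block.

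Take $P=v_1v_2v_3$ with $v_1,v_3$ blue and $v_2$ red. In any caterpillar obtained by adding leaves adjacent to vertices of $P$, the neighbors of $v_2$ are $v_1$, $v_3$, and some added leaves $\ell$. Since $N[\ell]=\{\ell,v_2\}$, every such leaf must be blue, so $N[v_2]$ has one red and at least two blue vertices. Extending the spine at the ends, or choosing a different longest path as the spine, does not change $N[v_2]$. So neither reinterpretation you float rescues the statement: as written, the proposition is false.

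The paper's own proof fails at exactly this case. When $v_{j-1},v_{j+1}$ share a color and $v_j$ has the other, it attaches one leaf $u$ of $v_j$'s color. This balances $N[v_j]$ but leaves $N[u]=\{u,v_j\}$ monochromatic. Its endpoint rule is also wrong: one opposite-colored leaf at $v_1$ when $v_1,v_2$ agree gives $|N[v_1]|=3$, which cannot be $0$-balanced. Your formula correctly asks for two leaves there.

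What your leaf count $a_i=(\text{own-color count})-(\text{opposite count})$ in $N_P[v_i]$ does prove is a corrected statement. A $2$-coloring of $P$ extends in this way if and only if no interior vertex $v_j$ has both $v_{j-1}$ and $v_{j+1}$ colored opposite to $v_j$. Under that hypothesis (and $n\ge 2$) you get $a_i\in\{0,2\}$ at the endpoints and $a_i\in\{1,3\}$ at interior vertices. Each added leaf takes the color opposite its neighbor, and every closed neighborhood is $0$-balanced. The example above shows the hypothesis is necessary. If one only demands balance at the closed neighborhoods of vertices of $P$, then the paper's same-colored leaf works, and the statement holds once the endpoint rule is corrected.

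Two small slips to fix:
\begin{itemize}
\item The ``tied'' case is not an obstacle. It occurs only at an endpoint whose neighbor has the other color, where $a_i=0$ already works.
\item In your $u$--$w$ repair, a single opposite-colored leaf at $u$ leaves $N[u]$ with two vertices of one color and one of the other, so it does not balance $u$.
\end{itemize}
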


\begin{proof}
Label the vertices of $P$ consecutively as $v_1, v_2, \ldots, v_n$ and fix a $2$-coloring of $P$.  
If $v_1$ and $v_2$ have the same color, add a leaf to $v_1$ of the opposite color, and similarly for $v_n$ and $v_{n-1}$.  For $2 \le j \le n-1$, if $v_{j-1}$, $v_j$, and $v_{j+1}$  all have the same color, add three leaves adjacent to $v_j$, each having the opposite color as $v_j$.   If $v_{j-1}$ and $v_{j+1}$ have one color and $v_j$ the opposite color, add one leaf adjacent to $v_j$ with the same color as $v_j$.  Finally, if $v_{j-1}$ and $v_{j+1}$ have opposite colors, add a leaf adjacent to $v_j$ with the opposite color as $v_j$.  This results in a $2$-coloring of a caterpillar $G$ that is $0$-balanced at every closed neighborhood. 
\end{proof}

\begin{defn}
\rm
 Let  $G$ be a caterpillar and $P$  a longest path in $G$.  For a vertex $v$ on $P$ we  define the \emph{weight} of $v$, denoted by $\wgt(v)$, to be  the number of neighbors of $v$ that are not on $P$.

\end{defn}

\begin{lem}
 Let  $G$ be a   caterpillar with a PB coloring and $P$  a longest path in $G$.  If $w$ is a non-endpoint vertex of $P$ with $\wgt(w) \in \{2,3\}$ then $w$ and its two neighbors on $P$ all have the same color and if  
    $\wgt(w) \in \{0,1\}$ then the two neighbors of $w$ on $P$  have opposite colors.
    \label{caterpillar-lemma}
\end{lem}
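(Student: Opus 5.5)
Let $w$ be a non-endpoint vertex of $P$ with neighbors $u,x$ on $P$. Then $\deg(w) = 2 + \wgt(w)$, and the neighbors of $w$ off $P$ are leaves of $G$. A leaf of $G$ that is not on $P$ has degree $1$, which is odd, so under a PB coloring it must be $0$-balanced at its closed neighborhood. Hence \emph{every leaf attached to $w$ has the color opposite to $c(w)$}. This is the key local fact, and the rest is a case analysis on $\wgt(w)$. Throughout, say $w$ is red.

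\textbf{Case $\wgt(w)=2$.} Here $\deg(w)=4$ is even, so $N(w)$ has exactly two red and two blue vertices. The two leaves are both blue, so $u$ and $x$ are both red, matching $w$.

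\textbf{Case $\wgt(w)=3$.} Here $\deg(w)=5$ is odd, so $N[w]$ has three red and three blue vertices. The three leaves are blue and $w$ is red, so $u$ and $x$ are both red.

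\textbf{Case $\wgt(w)=0$.} Here $\deg(w)=2$ is even, so $N(w)=\{u,x\}$ has one red and one blue vertex. Thus $u$ and $x$ have opposite colors.

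\textbf{Case $\wgt(w)=1$.} Here $\deg(w)=3$ is odd, so $N[w]$ has two red and two blue vertices. The single leaf is blue and $w$ is red, which leaves one red and one blue among $u,x$. So again they have opposite colors.

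The only point needing care is the claim that the off-path neighbors of $w$ are leaves, i.e.\ have degree exactly $1$. Since $G$ is a caterpillar and $P$ is a longest path, every vertex not on $P$ is a leaf adjacent to a vertex of $P$: otherwise $P$ could be extended, or $G$ would not be a caterpillar. Once this is settled, each case is just the parity-based balance condition from Definition~\ref{PB-def}, applied to $N(w)$ when $\deg(w)$ is even and to $N[w]$ when $\deg(w)$ is odd.
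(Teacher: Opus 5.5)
Your proposal is correct and follows the paper's proof essentially step for step. Both arguments first use that each leaf neighbor of $w$ (degree $1$, odd) must be colored opposite to $w$, and then do the same four-case check, using $N(w)$ when $\deg(w)\in\{2,4\}$ and $N[w]$ when $\deg(w)\in\{3,5\}$. Your extra justification that the off-path neighbors of $w$ are leaves is a point the paper simply takes from the definition of a caterpillar.
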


\begin{proof}
  Let $w$ be a non-endpoint vertex of $P$ and without loss of generality,  assume $w$ is blue.     If $\wgt(w) = 2$ then  $\deg(w) = 4$ and the two leaf neighbors of $w$ are red, so the two neighbors of $w$ on $P$ must be blue since the coloring is $0$-balanced at $N(w)$. If  $\wgt(w) = 3$ then  $\deg(w) = 5$ and the three leaf neighbors of $w$ are red, so the two neighbors of $w$ on $P$ must be blue since the coloring is $0$-balanced at  $N[w]$. 
Similarly, if $\wgt(w) = 0$ then  $\deg(w) = 2$ and both neighbors are on $P$, so they must be opposite colors and if  $\wgt(w) = 1$ then  $\deg(w) = 3$ and  its leaf neighbor not on $P$ is red, so  the two neighbors   on $P$ must be opposite colors.
\end{proof}

\begin{thm}
Let $G$ be a caterpillar, let $P$ be a longest path in $G$ and let $\{w_1, w_2, \ldots, w_m\}$ be the  set of  vertices  on $P$ that have weight $2$ or $3$.  
Then $G \in \PB $ if and only if  $\wgt(v) \le 3$ for all vertices $v$ on $P$ and the path segments remaining when the vertices $w_1, w_2, \ldots, w_m$ are removed from $P$ each have  an even number of vertices.
\label{caterpillar-pb}
\end{thm}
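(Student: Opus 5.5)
We will work directly with the structure forced by a PB coloring, using Lemma~\ref{caterpillar-lemma} as the main local tool. Write $P = u_1 u_2 \cdots u_r$. Because $P$ is a longest path, $u_1$ and $u_r$ are leaves of $G$. Every vertex not on $P$ is also a leaf, and a non-endpoint $u_i$ has degree $\wgt(u_i)+2$. The first observation is that in any PB coloring a leaf $\ell$ (degree $1$, odd) must be $0$-balanced at $N[\ell]$. Hence every leaf receives the color opposite to its unique neighbor. In particular, all leaves hanging off a spine vertex $v$ have the color opposite to $v$.

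\emph{Necessity of $\wgt(v)\le 3$.} Let $v$ be a spine vertex with $\wgt(v)=w\ge 4$. If $w$ is even, then $N(v)$ contains at least $w$ vertices of the color opposite to $v$, and $w > (w+2)/2$, so $N(v)$ is not $0$-balanced. If $w$ is odd, then $N[v]$ has $w+3$ vertices, and at most $3$ of them (namely $v$ and its two spine neighbors) can share $v$'s color. Balance would force $(w+3)/2\le 3$, that is, $w\le 3$. Either way we get a contradiction.

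\emph{Reduction to a color sequence.} Set $c_j=c(u_j)$. Lemma~\ref{caterpillar-lemma} translates the PB condition on spine vertices into two rules:
\begin{itemize}
\item for each non-endpoint $u_j$ of weight $0$ or $1$, we have $c_{j-1}\ne c_{j+1}$;
\item for each $w_i=u_j$ (weight $2$ or $3$), we have $c_{j-1}=c_j=c_{j+1}$.
\end{itemize}
The leaf condition at $u_1$ gives $c_1\ne c_2$, and similarly $c_{r-1}\ne c_r$. We handle the endpoints by adjoining virtual vertices $u_0,u_{r+1}$ with $c_0=c_1$ and $c_{r+1}=c_r$. Then the endpoint conditions become the rule $c_{j-1}\ne c_{j+1}$ applied at $j=1$ and $j=r$, and they have exactly the same form as the condition at a $w_i$.

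Now consider a segment $x_1,\dots,x_s$ of weight-$0/1$ vertices, including $u_1$ or $u_r$ when they lie in it, bounded on each side either by some $w_i$ or by a virtual endpoint. Call the two boundary colors $c_0$ and $c_{s+1}$. In every case we have $c_0=c_1$, $c_s=c_{s+1}$, and $c_{j-1}\ne c_{j+1}$ for $1\le j\le s$. The middle rule means $c_{j+2}=\overline{c_j}$, so the sequence is forced to be the period-$4$ pattern $c_0c_0\,\overline{c_0}\,\overline{c_0}\,c_0c_0\cdots$, with equal colors in the pairs $(0,1),(2,3),\dots$. Therefore $c_s=c_{s+1}$ holds exactly when $s$ is even. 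An empty segment (two consecutive $w$'s) has $s=0$, which is even. This proves necessity. It also reproves the $m=0$ case, which is Proposition~\ref{prop:paths}.

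\emph{Sufficiency.} Conversely, if all weights are at most $3$ and every segment has even length, we color each segment by the pattern above. We give each $w_i$ the color shared by its two neighboring segment ends; consecutive $w$'s can just share a color, since the segment between them is empty. We color every leaf opposite to its neighbor. We then verify each vertex type directly:
\begin{itemize}
\item leaves are balanced at $N[\cdot]$;
\item weight-$0$ vertices have two spine neighbors of opposite colors;
\item weight-$1$ vertices have $N[v]$ consisting of $v$, an opposite leaf, and two spine neighbors of opposite colors, which is $2$ versus $2$;
\item weight-$2$ vertices have two same-colored spine neighbors and two opposite leaves;
\item weight-$3$ vertices have $N[v]$ consisting of $v$ and two neighbors of one color against three leaves, which is $3$ versus $3$.
\end{itemize}

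The main obstacle is the bookkeeping at the ends of $P$ and between adjacent $w$'s. The virtual-endpoint device is meant to make all segments uniform; the one thing to check carefully is that the rule at $u_2$, together with $c_1\ne c_2$, really matches the pattern started with $c_0=c_1$.
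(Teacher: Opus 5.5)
Your proposal is correct and follows essentially the same route as the paper. Both arguments first count opposite-colored leaves to bound every spine weight by $3$, then use Lemma~\ref{caterpillar-lemma} to force the double-alternating pattern along each segment with a parity clash at its right boundary, and both use the same double-alternating coloring for sufficiency; your virtual endpoints $u_0,u_{r+1}$ only repackage the paper's separate handling of the leaf ends of $Q_0$ and $Q_m$. In the sufficiency direction, state explicitly (as the paper does) that you color from left to right, starting each segment's pattern from the color already given to the preceding $w_i$, since otherwise the two segment ends adjacent to a given $w_i$ need not share a color.
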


\begin{proof}
Let $Q_0, Q_1, \ldots, Q_m$ be the path segments remaining when the vertices $w_1, w_2, \ldots, w_m$ are removed from $P$, so that $P$ consists of $Q_0$ followed by $w_1$, followed by $Q_1$, followed by $w_2$, etc. and ending with $Q_m$.

First suppose that every vertex $v$ on $P$ has $\wgt(v) \le 3$ and that each  path segment $Q_i$ has an  even number of vertices for $0 \le i \le m$.
 We provide a PB coloring for $G$ as follows.  
 We color the vertices of $P$ from left to right and when we get to each   $w_i$ we assign it the same  color as the vertex proceeding it on $P$.
  Next we  consider path segment $Q_i$ consisting of  $j(i)$ vertices, where $j(i)$ is even, and label  the vertices consecutively as $x^i_1, x^i_2, \ldots, x^i_{j(i)}$.  If $i=0$, arbitrarily choose color  blue for $x^0_1$ (the leftmost vertex of $P$).  Otherwise,  assign $x^i_1$   the same color as $w_i$.  For $\ell \ge 2$, give vertex $x^i_{\ell}$ the same color as $x^i_1$ if $\ell \equiv 0,1 \pmod 4$, and otherwise, assign it the opposite color. This assigns a color to every vertex of $P$.  Note that  every vertex $v$  of $P$ with $\deg(v) \ge 2$ and $\wgt(v) \in \{0,1\}$
 has a red neighbor and a blue neighbor on $P$. 
The  vertices of $G$ that are still uncolored are leaves, and each is assigned the opposite color from that of its neighbor, so the  coloring is $0$-balanced at the closed neighborhoods of these leaves.   

We next verify that our coloring is parity balanced at the vertices on $P$.   Observe that the color assigned to each vertex $w_i$ is the same as the colors assigned to its two neighbors on $P$.  If $\wgt(w_i) = 3$ then $\deg(w_i) = 5$   and the coloring is $0$-balanced at  $N[w_i]$, and  if $\wgt(w_i) = 2$ then $\deg(w_i) = 4$   and the coloring is $0$-balanced at  $N(w_i)$.    The endpoints of $P$ each have degree $1$ and are assigned the opposite color from that of their neighbor, thus the coloring is parity balanced at those vertices.  The remaining vertices $v$  in the segments $Q_i$ each have a red neighbor and a blue neighbor on $P$ and $\wgt(v) \in \{0,1\}$, so the coloring is $0$-balanced at $N(v)$   if $\wgt(v) = 0$  and $0$-balanced at $N[v]$ if $\wgt(v) = 1$.   This completes the proof that our coloring is parity balanced.

Conversely, suppose that $G \in \PB$ and fix a PB coloring of $G$.
 We first show that $\wgt(v) \le 3$ for every vertex $v$ on $P$.  For a contradiction, suppose that there exists a vertex $v$ on $P$ with $\wgt(v) \ge 4$, and without loss of generality assume that $v$ is blue.  Each leaf neighbor of $v$ must be red, so $v$ has at least $4$ red neighbors and at most $2$ blue neighbors (those on $P$), contradicting the asssumption that the coloring was parity balanced.

It remains to show that each $Q_i$ has an even number of vertices.  Suppose for a contradiction that there exists an $i: 0 \le i \le m$ for which $Q_i$ has an odd number of vertices and label the vertices from left to right as  $y^i_1, y^i_2, \ldots, y^i_{2j+1}$.  Without loss of generality we may assume that $y^i_1$ is blue.  If $i=0$ then $y^0_1$ is a leaf and $y^0_2$ must be red.  Otherwise, $i \ge 1$ and $y^i_1$ is adjacent to $w_i$, so by Lemma~\ref{caterpillar-lemma} it must be the same color as $w_i$, so both $w_i$ and $y^i_1$ are blue.  Now $\wgt(y^i_1) \in \{0,1\}$, so by Lemma~\ref{caterpillar-lemma} we know $y^i_2$ is red.  We continue applying Lemma~\ref{caterpillar-lemma} to conclude that $y^i_3$ is red, $y^i_4$ is blue, $y^i_5$ is blue, etc. so that $y^i_{\ell}$ is blue if $\ell \equiv 0,1 \pmod 4$ and  red if $\ell \equiv 2,3 \pmod 4$.  In particular, $y^i_{2j}$ and $y^i_{2j+1}$ have the same color.  If $y^i_{2j+1}$  is an endpoint of $P$, this is a contradiction because  in this case $y^i_{2j+1}$ would be a leaf and must be the  opposite color from its neighbor. Otherwise, $y^i_{2j+1}$  is adjacent to $w_{i+1}$.  
Applying Lemma~\ref{caterpillar-lemma}, the two neighbors of $y^i_{2j+1}$ on $P$ must have opposite colors, so $w_{i+1}$ would get the color opposite that of $y^i_{2j}$ and $y^i_{2j+1}$.  However, the same lemma  applied to $w_{i+1}$ implies that $w_{i+1}$ and $y^i_{2j+1}$ get the same color, a contradiction.
\end{proof}

\begin{lem}
 Let  $G$ be a   caterpillar with a CSB coloring and let  $P$  a longest path in $G$.  The following hold for any non-endpoint vertex $v$ of $P$.
 \begin{itemize}
\item  If $\wgt(v) \in \{3,4\}$ then $v$ and  its two neighbors on $P$ all have the same color.

\item If $\wgt(v) \in \{0,1\}$ then
the neighbors of $v$ on $P$ cannot both be $v$'s color.  

\item If $\wgt(v) \in \{1,2\}$ then the two neighbors of $v$ on $P$ cannot both be the opposite color from $v$.

     \end{itemize}
 
    \label{caterpillar-lemma-2}
\end{lem}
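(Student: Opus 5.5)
The plan is a direct counting argument at the closed neighborhood $N[v]$, in the spirit of the proof of Lemma~\ref{caterpillar-lemma}. Let $v$ be a non-endpoint vertex of $P$ and assume without loss of generality that $v$ is blue. Since $P$ is a longest path, every neighbor of $v$ off $P$ is a leaf. Each leaf $\ell$ has $N[\ell]=\{\ell,v\}$, and the coloring is $1$-balanced there only if $\ell$ is red. (If both vertices were blue, $N[\ell]$ would contain two blue and no red vertices.) Hence $N[v]$ consists of $v$ (blue), $\wgt(v)$ red leaves, and the two $P$-neighbors of $v$. Say $b$ of these two $P$-neighbors are blue, with $b\in\{0,1,2\}$. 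Then
\[
C_{\rm blue}\text{-}\deg[v]=1+b \quad\text{and}\quad C_{\rm red}\text{-}\deg[v]=\wgt(v)+2-b .
\]
The CSB condition at $N[v]$ is therefore
\[
|\wgt(v)+1-2b|\le 1 .
\]
This is the single inequality that drives all three items.

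Next I check each item against this inequality. If $\wgt(v)\in\{3,4\}$, then $\wgt(v)+1\in\{4,5\}$, and the inequality forces $2b\ge 3$, so $b=2$. Both $P$-neighbors are then blue, the same color as $v$. If $\wgt(v)\in\{0,1\}$, then $b=2$ would give $|\wgt(v)-3|\ge 2$, a violation, so the $P$-neighbors are not both blue. If $\wgt(v)\in\{1,2\}$, then $b=0$ would give $\wgt(v)+1\ge 2$, again a violation, so the $P$-neighbors are not both red.

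The only real care needed is the reduction that every off-path neighbor of $v$ is a leaf of the opposite color. That uses both the longest-path property of $P$ and the CSB condition at the leaf itself; everything after that is arithmetic. I don't expect a genuine obstacle. One small point: the statement implicitly assumes $\wgt(v)\le 4$ in the first item. I don't need a separate argument for larger weights, since they are not covered by the lemma; the same inequality shows they are impossible anyway.
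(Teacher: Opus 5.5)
Your proposal is correct and is essentially the paper's argument: assume $v$ is blue, observe that its off-path neighbors are leaves forced to be red, and count the colors in $N[v]$. The only difference is presentational: the paper checks the weights case by case, while you fold all three items into the single inequality $|w+1-2b|\le 1$, where $w$ is the weight of $v$ and $b$ is the number of blue neighbors of $v$ on $P$.
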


\begin{proof}
  Without loss of generality, we may assume $v$ is blue.     If $\wgt(v) \in\{3,4\}$ then  $v$ has three or four leaf neighbors that are red, and therefore, both is neighbors on $P$ must be blue.  If $\wgt(v) = 1$ then $v$ has one red leaf neighbor and $v$ is blue, so  among its two neighbors on $P$, one must be blue and the other red.  If $\wgt(v) = 0$ then $\deg(v) = 2$ and since $v$ is blue, its neighbors on $v$  cannot both be blue. Finally, if $\wgt(v) = 2$ then $v$ has two red leaf neighbors and $\deg(v) = 4$, so its neighbors on $P$ cannot both be red.
\end{proof}

The next theorem characterizes caterpillars that have a CSB-coloring.
Our proof involves a coloring of paths defined as follows:  if  path $Q$ consisists of the consecutive vertices $z_1, z_2, \ldots, z_m$,  then  a \emph{double-alternating coloring}  of $Q$ is an assignment of red or blue to each vertex of $Q$ so that $z_i$ has the same color as $z_1$ if $i \equiv 0,1 \pmod 4$ and the opposite color otherwise.

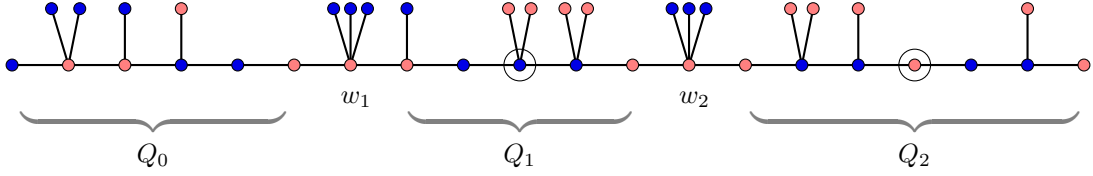
\begin{figure}\centering 
\begin{tikzpicture}[scale=.75]

\draw[thick] (0,0)--(19,0);
\draw[thick] (.7,1)--(1,0)--(1.2,1);
\draw[thick] (2,0)--(2,1);
\draw[thick] (3,0)--(3,1);
\draw[thick] (5.7,1)--(6,0)--(6.3,1);
\draw[thick] (6,0)--(6,1);
\draw[thick] (7,0)--(7,1);
\draw[thick] (8.8,1)--(9,0)--(9.2,1);
\draw[thick] (9.8,1)--(10,0)--(10.2,1);
\draw[thick] (11.7,1)--(12,0)--(12.3,1);
\draw[thick] (12,0)--(12,1);
\draw[thick] (13.8,1)--(14,0)--(14.2,1);
\draw[thick] (15,0)--(15,1);
\draw[thick] (18,0)--(18,1);

\draw(9,0) circle [radius=8pt] ;
\draw(16,0) circle [radius=8pt] ;

\filldraw[blue!90!black, draw=black]
(0,0)circle [radius=3pt]
(3,0)circle [radius=3pt]
(4,0)circle [radius=3pt]
(8,0)circle [radius=3pt]
(9,0)circle [radius=3pt]
(10,0)circle [radius=3pt]
(14,0)circle [radius=3pt]
(15,0)circle [radius=3pt]
(17,0)circle [radius=3pt]
(18,0)circle [radius=3pt]

(.7,1)circle [radius=3pt]
(1.2,1)circle [radius=3pt]
(2,1)circle [radius=3pt]
(6,1)circle [radius=3pt]
(5.7,1)circle [radius=3pt]
(6.3,1)circle [radius=3pt]
(7,1)circle [radius=3pt]
(12,1)circle [radius=3pt]
(11.7,1)circle [radius=3pt]
(12.3,1)circle [radius=3pt]
;

\filldraw[red!50, draw=black]
(1,0)circle [radius=3pt]
(2,0)circle [radius=3pt]
(5,0)circle [radius=3pt]
(6,0)circle [radius=3pt]
(7,0)circle [radius=3pt]
(11,0)circle [radius=3pt]
(12,0)circle [radius=3pt]
(13,0)circle [radius=3pt]
(19,0)circle [radius=3pt]

(3,1)circle [radius=3pt]
(9.2,1)circle [radius=3pt]
(8.8,1)circle [radius=3pt]
(10.2,1)circle [radius=3pt]
(9.8,1)circle [radius=3pt]
(14.2,1)circle [radius=3pt]
(13.8,1)circle [radius=3pt]
(15,1)circle [radius=3pt]
(18,1)circle [radius=3pt]
;

\filldraw[red!50, draw=black]
(16,0)circle [radius=3pt]
;

\node(1) at (6.1,-.6) {$w_1$};

\node(1) at (12.1,-.6) {$w_2$};

\node(2) at (2.5,-1) {\Large \color{gray} $\underbrace{\;\;\;\;\;\;\;\;\;\;\;\;\;\;\;\;\;\;\;\;\;\;\;\;\;}$};

\node(3) at (2.5,-1.6) {$Q_0$};

\node(2) at (9,-1) {\Large \color{gray} $\underbrace{\;\;\;\;\;\;\;\;\;\;\;\;\;\;\;\;\;\;\;\;\;}$};

\node(3) at (9,-1.6) {$Q_1$};

\node(2) at (16,-1) {\Large \color{gray} $\underbrace{\;\;\;\;\;\;\;\;\;\;\;\;\;\;\;\;\;\;\;\;\;\;\;\;\;\;\;\;\;\;\;}$};

\node(3) at (16,-1.6) {$Q_2$};

\end{tikzpicture}

\caption{A caterpillar with a CSB-coloring. As described  the proof of Theorem~\ref{CSB-caterpillar}, the circled vertices  and vertices $w_1$ and $w_2$ interrupt the double-alternating coloring.
}
 \label{caterpillar-figure}
\end{figure}

\begin{thm}
Let $G$ be a caterpillar, let $P$ be a longest path in $G$, and let $\{w_1, w_2, \ldots, w_m\}$ be the set of vertices on $P$ that have weight $3$ or $4$. 
Then $G \in \CSB$ if and only if $\wgt(v) \le 4$ for all vertices $v$ on $P$,  and for $0 \le i \le m$,  the path segments $Q_i$ remaining when the vertices $w_1, w_2, \ldots, w_m$ are removed from $P$ satisfy at least one of the following three conditions:

\begin{enumerate}
    \item[\rm{(a)}] $Q_i$ contains an even number of vertices.
    \item[\rm{(b)}] There exists a weight $2$ vertex in an odd position of $Q_i$.
    \item[\rm{(c)}] There exists a  weight $0$ vertex in an even position of $Q_i$.
\end{enumerate}
\label{CSB-caterpillar}
\end{thm}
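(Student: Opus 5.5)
The plan is to reduce a CSB coloring of $G$ to a labeling of the edges of $P$, and then analyse each segment $Q_i$ as a one-dimensional constraint problem propagated from left to right.

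\textbf{Step 1: normalize the leaves.} A leaf $\ell$ with neighbor $v$ has $N[\ell]=\{\ell,v\}$. Two vertices of the same color would differ by $2$, so in any CSB coloring every leaf not on $P$ has the color opposite to its neighbor. The two endpoints of $P$ are also leaves, so each has the color opposite to its neighbor on $P$.

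\textbf{Step 2: write the constraint at each internal spine vertex.} Let $v$ be a non-endpoint vertex of $P$ with $\wgt(v)=w$, and let $s\in\{0,1,2\}$ be the number of its two $P$-neighbors having $v$'s color. Then $N[v]$ contains $1+s$ vertices of $v$'s color and $w+2-s$ of the other color. The CSB condition at $v$ is therefore $|2s-1-w|\le 1$, which gives
\begin{itemize}
\item $s\le 1$ when $w=0$,
\item $s=1$ when $w=1$,
\item $s\ge 1$ when $w=2$,
\item $s=2$ when $w\in\{3,4\}$,
\item no possible $s$ when $w\ge 5$.
\end{itemize}
This refines Lemma~\ref{caterpillar-lemma-2}. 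In particular $\wgt(v)\le 4$ is necessary.

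\textbf{Step 3: translate into edge labels.} Label each edge of $P$ \emph{same} or \emph{diff} according to whether its endpoints share a color. Any label sequence is realized by a coloring of $P$, unique up to swapping the two colors, and leaves are then colored by Step 1. The constraints become:
\begin{itemize}
\item both edges at each $w_j$ are \emph{same};
\item the edge at each endpoint of $P$ is \emph{diff};
\item each remaining vertex has the number of incident \emph{same} edges allowed by Step 2.
\end{itemize}
The forced \emph{same} edges at the $w_j$ decouple the problem into independent problems, one for each $Q_i$ together with its two boundary edges. So $G\in\CSB$ exactly when each segment problem is solvable.

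\textbf{Step 4: solve a single segment.} Process $Q_i$ from left to right, carrying the label of the incoming edge as a state.
\begin{itemize}
\item A weight-$1$ vertex flips the state: a \emph{same} edge in forces a \emph{diff} edge out, and vice versa.
\item A weight-$0$ vertex forces \emph{diff} out after \emph{same} in, but allows either label after \emph{diff} in.
\item A weight-$2$ vertex forces \emph{same} out after \emph{diff} in, but allows either label after \emph{same} in.
\end{itemize}
Thus, absent flexible choices, the labels simply alternate along $Q_i$, giving the double-alternating coloring.

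\emph{Sufficiency.} Under (a), alternation with the given boundary labels closes up correctly. Under (b) or (c), I will exhibit an explicit coloring: use the double-alternating coloring up to the flexible vertex, take the non-alternating choice there to shift parity by one, and double-alternate afterwards, as in Figure~\ref{caterpillar-figure}.

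\emph{Necessity.} If $Q_i$ has odd length and no flexible vertex in the stated positions, I will show that the state reaching each vertex is forced. A weight-$0$ vertex in an odd position, or a weight-$2$ vertex in an even position, always receives the state that leaves it no freedom. Hence the labels alternate throughout, and the final boundary edge gets the wrong label, a contradiction.

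The main obstacle will be the bookkeeping of positions in Step 4. One must check carefully which incoming state a vertex at an odd or even position receives, uniformly across the three types of segment: between two $w$'s, at an end of $P$ (where the endpoint itself lies in $Q_0$ or $Q_m$ and has its own forced \emph{diff} edge), or the whole of $P$ when $m=0$. I would first verify this on small cases to fix the indexing convention that makes (b) and (c) correspond exactly to the flexible transitions. Then I would prove the invariant ``the incoming edge at position $p$ is \emph{same} iff $p$ is even'' (or its complement) by induction along $Q_i$.
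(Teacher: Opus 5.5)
Your proposal is correct and is essentially the paper's proof in edge-label form. Your Step~2 sharpens Lemma~\ref{caterpillar-lemma-2}, sufficiency is the same double-alternating coloring shifted once at a flexible vertex, and necessity is the same forced left-to-right propagation that ends in a wrong boundary label. When you fix the bookkeeping, the invariant you want is that the incoming edge at position $p$ of $Q_i$ is \emph{same} exactly when $p$ is odd. Your uniform treatment of flexible vertices also covers a weight-$2$ vertex at the first or last position of $Q_i$ (e.g.\ a one-vertex $Q_i$ between $w_i$ and $w_{i+1}$), which the paper's choice $1<\ell<j(i)$ in case (b) does not literally handle.
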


\begin{proof}
Let $Q_0, Q_1, \ldots, Q_m$ be the path segments remaining when the vertices $w_1, w_2, \ldots, w_m$ are removed from $P$, so that $P$ consists of $Q_0$ followed by $w_1$, followed by $Q_1$, followed by $w_2$, etc. and ending with $Q_m$.

First suppose that every vertex $v$ on $P$ has $\wgt(v) \le 4$ and that each path segment $Q_i$ satisfies one of the three listed conditions in the theorem.  We provide a CSB-coloring for $G$ and illustrate the coloring in   Figure~\ref{caterpillar-figure}.

We focus on coloring the vertices of the spine, keeping in mind that  each leaf must be assigned the color opposite that of its neighbor.
 We color the vertices of $P$ from left to right and when we get to each $w_i$ we assign it the same color as the vertex proceeding it on $P$. Next we consider path segment $Q_i$ consisting of $j(i)$  vertices and label the vertices consecutively as $x^i_1, x^i_2, \ldots, x^i_{j(i)}$.
 If $i=0$, arbitrarily choose color  blue for $x^0_1$ (the leftmost vertex of $P$).  Otherwise,  assign $x_1^i$   the same color as $w_i$.   We consider each of the three cases (a),(b),(c) separately.  If $j(i)$ is even (case (a)), use a double-alternating coloring on $Q_i$, as illustrated in the coloring of $Q_0$ of Figure~\ref{caterpillar-figure}. 
  Otherwise, we may assume that $j(i)$ is odd.   In Case (b), let $\ell$ be an odd integer with $1 < \ell < j(i)$ for which $\wgt(x^i_{\ell}) = 2$.  Assign a double-alternating coloring to the path segment from $x^i_1$ to $x^i_{\ell-1}$, assign $x^i_{\ell}$ and $x^i_{\ell+1}$
 the same color as $x^i_{\ell-1}$, and then assign a double-alternating coloring to the  path segment from $x^i_{\ell+1}$ to $x^i_{j(i)}$. This is  illustrated in the coloring of $Q_1$ in Figure~\ref{caterpillar-figure}. In Case (c), let $\ell$ be an even integer with $1 < \ell < j(i)$ for which $\wgt(x^i_{\ell}) = 0$.  Assign a double-alternating coloring to the path segment from $x^i_1$ to $x^i_{\ell-1}$, assign $x^i_{\ell}$ the opposite color from $x^i_{\ell-1}$ and then assign a double alternating coloring to the  path segment from $x^i_{\ell}$ to $x^i_{j(i)}$.   This is  illustrated in the coloring of $Q_2$ in Figure~\ref{caterpillar-figure}. Now we have assigned a color to every vertex on $P$. The  vertices of $G$ that are still uncolored are leaves, and each is assigned the opposite color from that of its neighbor.

 We next verify that  this is a CSB  coloring.  Each leaf is colored the opposite color from its neighbor, so  the coloring is $0$-balanced at $N[v]$  for each leaf $v$.    By construction, each vertex $w_i$ has the same color as its two neighbors on $P$ and the opposite color from its $3$ or $4$ leaf neighbors, therefore  the coloring is $1$-balanced at $N[w_i]$  for $i: 1\le i \le m$.
The remaining vertices are on $Q_i$ for some $i: 1 \le i \le m$.  The vertices  $v$ colored using a double-alternating coloring each have one blue neighbor and one red neighbor on $P$ and also have $\wgt(v) \in \{0,1,2\}.$ Thus the coloring is $1$-balanced at $N[v]$.  The only remaining vertices  arise  either as  a vertex $x$ in an odd position of $Q_i$ with $\wgt(x) = 2$ in case (b)  or a vertex $y$ in an even position of $Q_i$ with $\wgt(y) = 0$ in case (c).  By construction, such a vertex $x$ will have the same color as its two neighbors on $P$, and the opposite color from its two leaf neighbors, so the coloring is $1$-balanced at $N[x]$.  Similarly, vertex $y$ will have the opposite color from its two neighbors on $P$  and no leaf neighbors, so the coloring is also $1$-balanced at $N[y]$.  Thus our coloring is a CSB coloring.

Conversely, suppose that $G$ is $1$-balanced at all closed neighborhoods and fix a CSB coloring of $G$.  We first show that $\wgt(v) \le 4$ for every vertex $v$ on $P$.  For a contradiction, suppose that there exists a vertex $v$ on $P$ with $\wgt(v) \ge 5$ and without loss of generality, assume that $v$ is blue.  Each leaf neighbor of $v$ must be red, so $v$ has at least $5$ red neighbors  and at most $3$ blue vertices in its closed neighborhood, a contradiction.

It remains to show that for each $Q_i$, at least one of (a),(b), (c) holds, so for a contradiction, suppose  there exists an $i: 0 \le i \le m$ for which  none of them hold.  Label the vertices  on $Q_i$ from left to right as  $y^i_1, y^i_2, \ldots, y^i_{2j+1}$.   Thus $\wgt(y^i_{\ell} ) \in \{0,1\}$ for $\ell$ odd and $\wgt(y^i_{\ell} ) \in \{1,2\}$ for $\ell$ even.
Without loss of generality we may assume that $y^i_1$ is blue.  If $i=0$ then $y^0_1$ is a leaf and  $y^0_2$ must be red.  Otherwise, $i \ge 1$ and $y^i_1$ is adjacent to $w_i$, and $\wgt(w_i) \in \{3,4\}$. We know $y^i_1$ is the same color as $w_i$ by Lemma~\ref{caterpillar-lemma-2},  so both $w_i$ and $y^i_1$ are blue.  Now $\wgt(y^i_1) \in \{0,1\}$, so by Lemma~\ref{caterpillar-lemma-2} we know $y^i_2$ is red. Next  $\wgt(y^i_2) \in \{1,2\}$, so by Lemma~\ref{caterpillar-lemma-2} we know  $y^i_3$ is red.  We continue applying Lemma~\ref{caterpillar-lemma-2} to conclude that  $y^i_4$ is blue, $y^i_5$ is blue, $y^i_6$ is red, etc. so that $y^i_{\ell}$ is blue if $\ell \equiv 0,1 \pmod 4$ and  red if $\ell \equiv 2,3 \pmod 4$.  In particular, $y^i_{2j}$ and $y^i_{2j+1}$ have the same color.  If $y^i_{2j+1}$  is an endpoint of $P$, this is a contradiction because  in this case $y^i_{2j+1}$ would be a leaf and must be the  opposite color from its neighbor. Otherwise, $y^i_{2j+1}$  is adjacent to $w_{i+1}$.  Applying Lemma~\ref{caterpillar-lemma-2}, the two neighbors of $y^i_{2j+1}$ on $P$ must have opposite colors, so $w_{i+1}$ would get the color opposite that of $y^i_{2j}$ and $y^i_{2j+1}$.  However, the same lemma  applied to $w_{i+1}$ implies that $w_{i+1}$ and $y^i_{2j+1}$ get the same color, a contradiction.
\end{proof}

In Theorem~\ref{CSB-caterpillar}, we characterized those caterpillars that have a $CSB$ coloring, and we next count how many  $2$-colored caterpillars there are  when the coloring is a CSB coloring and   the spine length is fixed. We consider the spine of a caterpillar  $T$ to be a
longest path in $T$, thus the endpoints of the spine have degree $1$.  

\begin{defn}
    Let ${\mathcal A}(n)$ be the set of $2$-colored caterpillars satisfying  all of the following:
    
    {\rm(i) } the spine contains $n$ vertices,

     {\rm(ii) } the coloring is a CSB coloring, and 

    {\rm(iii)} the color of the  leftmost vertex  of the spine is specified.

    \noindent
    Furthermore, let $A(n) = |{\mathcal A}(n)|.$
    \label{An-def}
\end{defn}

We will find explicit and recursive formulas for $A(n)$.  It will be helpful to define the related class  ${\mathcal B}(n)$ of $2$-colored caterpillars.

\begin{defn}
    Let ${\mathcal B}(n)$ be the set of $2$-colored caterpillars satisfying  all of the following:
    
    {\rm(i) } the spine contains $n$ vertices,

     {\rm(ii) }the coloring is $1$-balanced at every closed neighborhood except for the leftmost vertex of the spine,  and 

    {\rm(iii)} the leftmost  two vertices  of the spine have the same specified color.

    \noindent
    Furthermore, let $B(n) = |{\mathcal B}(n)|.$
     \label{Bn-def}
\end{defn}

\begin{thm}
 The functions $A(n)$ and $B(n)$  from Definitions~\ref{An-def}  and \ref{Bn-def} satisfy the initial conditions $A(2) = A(3) = 1$,  $B(2) = 0$, and $B(3) =3$, and  the  following recurrences hold for  for  $n\ge 4$: 
 
{\rm  (i)} $A(n) = A(n-1) + 3B(n-1)$
 
 {\rm  (ii)} $B(n) = 3A(n-1) + 3B(n-1)$
 
{\rm  (iii)} $A(n)=4A(n-1)+6A(n-2)$

{\rm  (iv)} $B(n)=4B(n-1)+6B(n-2)$.

 \label{A_n-Thm}
\end{thm}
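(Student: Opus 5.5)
The plan is to translate CSB colorings of a caterpillar into a local condition on the spine, and then derive (i) and (ii) by peeling off the leftmost spine vertex. Label the spine $v_1,\dots,v_n$. Both endpoints and every non-spine vertex are leaves. A leaf's closed neighborhood has two vertices, so $1$-balance there forces the leaf to have the opposite color from its neighbor. Hence a caterpillar in ${\mathcal A}(n)$ or ${\mathcal B}(n)$ is determined by the colors of the spine vertices together with $\wgt(v_j)$ for $2\le j\le n-1$.

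The first step is a case analysis at an interior spine vertex $v$ of weight $w$. Let $s\in\{0,1,2\}$ be the number of spine neighbors of $v$ having $v$'s color. Then $N[v]$ has $1+s$ vertices of $v$'s color and $(2-s)+w$ of the other color, so $1$-balance at $N[v]$ means $|2s-1-w|\le 1$. This gives:
\begin{itemize}
\item $s=0$ forces $w=0$, which is one choice;
\item $s=1$ allows $w\in\{0,1,2\}$, which is three choices;
\item $s=2$ allows $w\in\{2,3,4\}$, which is three choices.
\end{itemize}
The endpoint conditions say that $v_2$ has the opposite color from $v_1$ for ${\mathcal A}(n)$, that $v_1,v_2$ share a color for ${\mathcal B}(n)$, and that $v_n$ has the opposite color from $v_{n-1}$ in both.

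For (i), take an element of ${\mathcal A}(n)$, so $v_1,v_2$ have opposite colors, and split on the color of $v_3$.
\begin{itemize}
\item If $v_3$ differs from $v_2$, then $s=0$ at $v_2$, so $\wgt(v_2)=0$ is forced. Deleting $v_1$ leaves a caterpillar with spine $v_2,\dots,v_n$ in which $v_2$ is a leaf of the prescribed color. This gives a bijection with ${\mathcal A}(n-1)$.
\item If $v_3$ matches $v_2$, then $s=1$ and there are $3$ choices of $\wgt(v_2)$. The part from $v_2$ on has its leftmost two spine vertices of the same color and no balance requirement at $v_2$ once $v_2$'s weight is ignored. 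This is exactly ${\mathcal B}(n-1)$.
\end{itemize}
Hence $A(n)=A(n-1)+3B(n-1)$.

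For (ii), take an element of ${\mathcal B}(n)$. Here $v_1$ is exempt and $v_1,v_2$ share a color.
\begin{itemize}
\item If $v_3$ matches $v_2$, then $s=2$, giving $3$ weight choices times $B(n-1)$.
\item If $v_3$ differs from $v_2$, then $s=1$, giving $3$ weight choices times $A(n-1)$.
\end{itemize}
Hence $B(n)=3A(n-1)+3B(n-1)$. The main care point is checking that these decompositions are genuine bijections. Distinct weight sequences give distinct caterpillars because the spine is fixed as a labeled longest path. The deleted vertex $v_1$ contributes nothing beyond its forced color. The hypothesis $n\ge 4$ guarantees $v_3$ exists and the smaller spine has at least $3$ vertices.

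The initial values follow directly. $A(2)=1$, since a single edge has its colors forced. $A(3)=1$, since $s=0$ at $v_2$ forces weight $0$. $B(2)=0$, since $v_2$ would have to both match and oppose $v_1$. $B(3)=3$, since $s=1$ at $v_2$. Finally, (iii) and (iv) are linear elimination. From (i), $3B(n-1)=A(n)-A(n-1)$. Substituting (ii) gives
\[A(n+1)=A(n)+9A(n-1)+9B(n-1)=4A(n)+6A(n-1).\]
The identical manipulation gives $B(n+1)=4B(n)+6B(n-1)$. The smallest case $n=4$ should be verified directly: $A(4)=10=4\cdot1+6\cdot1$ and $B(4)=12=4\cdot 3+6\cdot 0$.
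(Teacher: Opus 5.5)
Your proposal is correct and follows essentially the same route as the paper. You delete $v_1$ together with the extra leaves of $v_2$, split on whether $v_3$ matches $v_2$, and count the admissible values of $\wgt(v_2)$: one (namely $0$), or three from $\{0,1,2\}$ or $\{2,3,4\}$. You then eliminate linearly to get (iii) and (iv). Your explicit condition $|2s-1-w|\le 1$ and your direct check of the case $n=4$ handle the boundary a bit more cleanly than the paper does, since the paper quietly uses (i) at $n=3$ when deriving (iii).
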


\begin{proof}

From the definitions, one can   verify that the initial conditions are satisfied.

First we show $A(n) = A(n-1) + 3B(n-1)$ for $n\ge 4$.     Let the $2$-colored caterpillar $T$  be an element of ${\mathcal A}(n)$, where  $c$  is the coloring and  $v_1$, the leftmost vertex of the spine, is red.     Label the vertices of the spine consecutively as $v_1, v_2, \ldots, v_n$.  Since the coloring is a CSB coloring, it is $1$-balanced at $N[v_1]$, so $v_2$ is blue.   We consider two cases, depending on the color of $v_3$.  
If $v_3$ is red then $\wgt(v_2) = 0$ since the coloring is $1$-balanced at $N[v_2]$, and similarly, if $v_3$ is blue, then $\wgt(v_2) \in \{0,1,2\}$.   Let $T'$ be the $2$-colored caterpillar formed by removing $v_1$ and any additional leaf neighbors of $v_2$ from $T$ and maintaining the vertex colors from $c$.  

If $v_3$ is red then  $ T' \in {\mathcal A}(n-1)$ since the coloring remains $1$-balanced at $N[v_2]$ when $v_1$ is removed.  Indeed,  the function that maps $T$ to $T'$ is  a one-to-one correspondence between elements of ${\mathcal A}(n)$ whose first three spine vertices have colors R, B, R and  elements of ${\mathcal A}(n-1)$.  
If $v_3$ is blue  then  $ T' \in {\mathcal B}(n-1)$.  Indeed,  each element of ${\mathcal B}(n-1)$ corresponds to three elements of   ${\mathcal A}(n)$, one for each possible value of  
    $\wgt(v_2)$.  Thus $A(n) = A(n-1) + 3B(n-1)$.

    Next we show $B(n) = 3A(n-1) + 3B(n-1)$ for $n \ge 4$.  
Now we let  the $2$-colored caterpillar $T$  be an element of ${\mathcal B}(n)$, where  $c'$  is the coloring and the leftmost two vertices of the spine are blue.   
 Label the vertices of the spine consecutively as $v_1, v_2, \ldots, v_n$, hence $v_1$  and $v_2$ are blue.  By the definition of  ${\mathcal B}(n)$ we know that $c'$ is $1$-balanced at $N[v]$ for every vertex of $T$ except for $v_1$.  We consider two cases depending on the color of $v_3$.  

 If $v_3$ is blue then we know $\wgt(v_2) \in \{2,3,4\}$  since   $c'$ is $1$-balanced at $N[v_2]$, and similarly, if $v_3$ is red then $\wgt(v_2) \in \{0,1,2\}$.  Let $T'$ be the $2$-colored caterpillar formed by removing $v_1$ and any additional leaf neighbors of $v_2$ from $T$ and maintaining the vertex colors from $c'$.    If $v_3$ is blue then $T' \in {\mathcal B}(n-1)$ and indeed there are three elements of ${\mathcal B}(n)$ corresponding to $T'$, one for each possible value of $\wgt(v_2)$.  Similarly, if $v_3$ is red then $T' \in {\mathcal A}(n-1)$ and  there are three elements of ${\mathcal B}(n)$ corresponding to $T'$, one for each possible value of $\wgt(v_2)$.  Thus $B(n) = 3A(n-1) + 3B(n-1)$.

 Now we combine the two recurrences:  $A(n)=A(n-1)+3B(n-1)$,  and $B(n)=3A(n-1)+3B(n-1)$ for $n\ge 4$ to finish the proof of the theorem.    
From  the first recurrence,  we conclude $B(n-1) =  \frac{1}{3}[A(n)-A(n-1)]$ for $n\ge 3$, and 
replacing $n-1$ by $n$ we get  and $B(n) = \frac{1}{3}[A(n+1)-A(n)]$. 
Combine these with 
 the second recurrence to get 
 $\frac{1}{3}[A(n+1) - A(n)] = 
 3A(n - 1)+ [A(n) - A(n - 1)] = A(n) + 2A(n-1)$. 
 This simplifies
to $A(n + 1) = 4 A(n) + 6A(n-1)$ 
for $n \ge 3$, so 
$A(n ) = 4A(n-1) + 6A(n-2)$ for $n \ge 4$. Similar calculations show that 
$B(n ) = 4B(n-1) + 6B(n-2)$ for $n \ge 4$.
\end{proof}

Using Theorem~\ref{A_n-Thm} and the method of rational generating functions (see, for instance, \cite[Chap. 4]{Stanley-EC1}), since $1-4x-6x^2=(1-(2-\sqrt{10})x)(1-(2+\sqrt{10})x)$, we find the following explicit formula for $n \ge 3$. 

\[A(n) = (-5/24 -\sqrt{10}/15) (2-\sqrt{10})^{n+1} + (-5/24 +\sqrt{10}/15) (2+\sqrt{10})^{n+1}.\] 

The values of $A(n)$ for $2\leq n \leq 10$ are $1, 1, 10, 46, 244, 1252, 6472, 33400, 172432$. 
This sequence, with the first 1 removed, is in the On-line Encyclopedia of Integer Sequences (OEIS) as A138041, and $A(n+2)$ is the number in the top row and first column and $B(n+2)$ is the number in the bottom row, first column of the matrix 
${\begin{bmatrix}
    1 & 3\\
    3 & 3
\end{bmatrix}}^n$.
This matrix arises from our recursive formulas in Theorem~\ref{A_n-Thm} since 

\begin{center}
$\begin{bmatrix}
    A(n)\\
   B(n)
\end{bmatrix}
= 
{\begin{bmatrix}
    1 & 3\\
    3 & 3
\end{bmatrix}}^{n-3} \begin{bmatrix}
   A(3)\\
 B(3)
\end{bmatrix}
$ for $n \ge 4$.
\end{center}

Thus, explicit formulas for $A(n)$ and $B(n)$ can also be found by diagonalizing the matrix. The values of $B(n)$ for $2\leq n\leq 8$ are $0,3,12, 66, 336, 1740,8976$ which are all divisible by three. The sequence $\frac{1}{3}B(n)$ appears in the OEIS as A085939.

\section{Red-blue removal and complete multipartite graphs}
\label{complete-multipartite-sec}

In this section, we  introduce the operation of red-blue removal which can be used to transform a $2$-coloring of one graph to a $2$-coloring of a simpler graph while maintaining balance properties of the coloring. We apply this technique to complete multipartite graphs and characterize those that are members of the classes $\OSB$, $\CSB$, $\SBV$ and $\PB$.
As a consequence, we prove that all complete multipartite graphs $G$ satisfy $\beta_2([G]) \leq 2$, $\beta_2(G) \le 2$ and  $\beta_2[G] \le 3$.

We introduce terms that will be used throughout this section.  In a complete multipartite graph, we call a vertex in a part of size $1$ a \emph{singleton} and we say that a part of the coloring is \emph{monochromatic} if all vertices in the part have the same color.  Additionally, a set of vertices containing more red than blue vertices is called \emph{red-heavy} and a $2$-coloring is \emph{red-heavy} if it contains more red than blue vertices.  Blue-heavy is defined similarly.
 
\begin{defn}
\rm
    For a $2$-colored  graph $G$, a \emph{red-blue removal} consists of removing a red vertex $x$  and a blue vertex $y$  for which $N(x) = N(y)$. The \emph{reduced graph} $\ghat$ is obtained by iteratively applying red-blue removals until none remain possible.
    \label{reduced-graph-def}
\end{defn}

One can verify that red-blue removals can be applied in any order, so the reduced graph  is well-defined.  For $2$-colored complete multiparite graphs, a  red-blue removal consists of removing a red and a blue vertex from one of the parts.
We record a few consequences of Definition~\ref{reduced-graph-def} in the following.

\begin{obs}
    Let $G$ be a $2$-colored complete multipartite graph and $\ghat$ the resulting reduced graph.

\begin{enumerate}

\item All parts of   $\ghat$ are monochromatic. 

\item  Odd parts of $G$ remain odd parts of $\ghat.$

\item Even parts of $G$ either remain even parts of $\ghat$ or are eliminated.

\item An SBV coloring of $G$ induces an SBV coloring of any graph obtained from $G$  by a sequence of red-blue removals.  Similar results are also true for any of OSB, CSB, and PB colorings. 

\end{enumerate}

\label{reduction-obs}
\end{obs}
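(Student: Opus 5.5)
The plan is to check the four items of Observation~\ref{reduction-obs} one by one, using the basic fact about a red-blue removal in a complete multipartite graph. If $x$ is red, $y$ is blue and $N(x)=N(y)$, then $x$ and $y$ lie in the same part. The reason is that two vertices in different parts are adjacent, so each lies in the other's neighborhood, and this cannot happen when the two neighborhoods are equal (no vertex is its own neighbor). Conversely, any two vertices in the same part have the same neighborhood, namely all vertices outside that part. So a red-blue removal deletes one red and one blue vertex from a single part. What remains is again a complete multipartite graph, with that part shrunk by two, or with the part eliminated if it had exactly two vertices.

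Items 1 through 3 follow from this description.
\begin{itemize}
\item For item 1, suppose some part of $\ghat$ contains both a red and a blue vertex. These two vertices have equal neighborhoods, so a further red-blue removal is possible, contradicting the definition of $\ghat$. Hence every part of $\ghat$ is monochromatic.
\item For items 2 and 3, each removal changes the size of exactly one part, by $2$, so the parity of every part is preserved. An odd part never reaches size $0$, so it survives as an odd part. An even part either stays even or is reduced to size $0$ and disappears.
\end{itemize}

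Item 4 is the only part requiring care, and it is a one-step computation followed by induction on the number of removals. Consider a single removal of $x,y$, and let $v$ be any remaining vertex.
\begin{itemize}
\item If $v\in N(x)=N(y)$, then both $x$ and $y$ are adjacent to $v$. Deleting them lowers the red count and the blue count of $N(v)$ and of $N[v]$ by exactly one each, so every red-minus-blue difference is unchanged.
\item If $v\notin N(x)$, then $v$ is adjacent to neither $x$ nor $y$. Neither neighborhood of $v$ changes.
\end{itemize}
In both cases, at every surviving vertex the open neighborhood is $\lambda$-balanced after the removal exactly when it was before, and likewise for the closed neighborhood. The degree of $v$ either drops by $2$ or stays the same, so its parity is preserved. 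Therefore the condition at $v$ keeps its meaning for each type of coloring: OSB ($1$-balanced at $N(v)$), CSB ($1$-balanced at $N[v]$), SBV (one of the two), and PB (balance at $N(v)$ or $N[v]$ according to the parity of $\deg v$). Induction over the sequence of removals finishes the argument. This argument uses only $N(x)=N(y)$, not the multipartite structure, so item 4 actually holds for red-blue removals in any $2$-colored graph.

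The one step where I expect a possible subtlety is the case where $v$ is itself adjacent to only one of $x,y$. This case cannot occur, because $N(x)=N(y)$, and that equality is exactly the property the argument depends on.
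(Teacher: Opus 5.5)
Your proposal is correct and takes the route the paper leaves implicit. The paper records these items without proof, as immediate consequences of Definition~\ref{reduced-graph-def} together with its remark that a red-blue removal in a complete multipartite graph deletes one red and one blue vertex from a single part; you have supplied the routine verifications, and your extra remark that item 4 uses only $N(x)=N(y)$, and so holds for red-blue removals in any $2$-colored graph, is also correct.
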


For OSB graphs, the converse of the result in Observation~\ref{reduction-obs} (4)  follows from Theorem~\ref{osb-multipartite-thm} along with Observation~\ref{reduction-obs} (2). 
 That is, if a 2-coloring obtained from $G$ by a sequence of red-blue removals is an OSB coloring, then the  original coloring of $G$ is an OSB coloring. 

However, for SBV, CSB and PB colorings, the converse of the result in Observation~\ref{reduction-obs} (4) is not true.
 A $2$-coloring of $G$ that is not an SBV coloring might  be an SBV coloring when restricted to the vertices of $\ghat$.  Similarly, for CSB and PB colorings.  To see that the converse does not hold for an SBV coloring consider the $2$-coloring of $K_{3,3,3}$ in which two parts have two red and one blue vertex and one part has one red and two blue vertices.  This is not an SBV coloring, however, the reduced graph is $K_{1,1,1}$ and the coloring with two red and one blue vertices is an SBV coloring of $K_{1,1,1}$.  For CSB and PB colorings consider the graph $K_{3,3,2}$ where one part of size three has an extra red, the other has an extra blue, and the even part has one red and one blue vertex.  This is not a PB or CSB coloring of $G$, but it is when we reduce to $\ghat$ which is $K_2$.

We take advantage of  Observation~\ref{reduction-obs} in the next proofs.

\begin{thm}
A complete multipartite graph $G$ is in $ \OSB$  if and only if  $|V(G)|$ is even or  $G$  has exactly one odd part.
In particular, all complete bipartite graphs are  in $\OSB$.

\label{osb-multipartite-thm}
\end{thm}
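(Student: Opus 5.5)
Both directions rest on one identity. Let $G$ be complete multipartite with parts $P_1,\dots,P_t$, and fix a $2$-coloring. Let $R$ and $B$ be the total numbers of red and blue vertices, and let $r_i,b_i$ be the numbers in $P_i$. For $v\in P_i$ we have $N(v)=V(G)\setminus P_i$, so the red count minus the blue count in $N(v)$ is $(R-B)-(r_i-b_i)$. Writing $d=R-B$ and $d_i=r_i-b_i$, a coloring is OSB if and only if $|d-d_i|\le 1$ for every $i$. This condition depends only on the parts, not on the individual vertices in them.

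\emph{Sufficiency.}
\begin{itemize}
\item If $|V(G)|$ is even, color each part as evenly as possible. Give the odd parts, of which there is an even number, alternately an extra red and an extra blue vertex. Then $d=0$ and each $d_i\in\{-1,0,1\}$, so $|d-d_i|\le1$.
\item If $G$ has exactly one odd part $P_1$, color $P_1$ with one extra red vertex and every other part evenly. Then $d=1$, $d_1=1$, and $d_i=0$ for $i\ge2$, so every difference is at most $1$.
\end{itemize}
The claim about complete bipartite graphs follows at once: with two parts, either both are even or both are odd (so $|V(G)|$ is even), or exactly one is odd.

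\emph{Necessity.} Suppose $|V(G)|$ is odd and $G$ has an OSB coloring. Then the number of odd parts is odd, so it is not $0$. We must show it is not at least $3$. The plan is to use Observation~\ref{reduction-obs}(4), applying red-blue removals to pass to the reduced graph $\ghat$, in which every part is monochromatic. By Observation~\ref{reduction-obs}(2), $\ghat$ still has all the odd parts, so it has at least $3$ odd parts. The coloring of $\ghat$ is still OSB.

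In $\ghat$ every $d_i$ equals $\pm|P_i|$, and $d=\sum_i d_i$ is odd. The constraint $|d-d_i|\le1$ for all $i$ forces every $d_i$ into the interval $[d-1,d+1]$. The odd $d_i$ must have the same parity as $d$, so they must all equal $d$. Even parts have even $d_i=d\pm1$.

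Assume without loss of generality that $d\ge1$; the case $d\le -1$ is symmetric, and $d\ne0$ since $d$ is odd.
\begin{itemize}
\item Every odd part has $d_i=d>0$, and there are $m\ge3$ of them.
\item Every even part has $d_i\ge d-1\ge0$.
\end{itemize}
Hence $d=\sum_i d_i\ge m\,d\ge 3d$, which contradicts $d\ge1$.

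This final counting step is the main point to verify. The key is that monochromatic parts turn the constraint into an equality for the odd parts. The reduction to $\ghat$ is what justifies treating the parts as monochromatic.
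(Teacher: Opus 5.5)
Your proof is correct, but your necessity argument takes a different route from the paper's.

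\textbf{The paper's route.} The paper passes to the reduced graph $\ghat$ and fixes an odd, monochromatic (say red) part $P$. For $x\in P$ the set $N(x)$ has even size, so it contains exactly $s$ red and $s$ blue vertices. Hence $\ghat$ has $s+|P|$ red and $s$ blue vertices in total. A blue vertex $z$, whose whole part is blue, would then see at least $s+1$ red and at most $s-1$ blue neighbors. So $\ghat$ has no blue vertices, it is a single odd part, and $G$ has exactly one odd part.

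\textbf{Your route.} You recast the OSB condition as the per-part criterion $|d-d_i|\le 1$ and finish with parity and the sum $d=\sum_i d_i\ge md$.

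\textbf{The reduction is not needed.} Your argument never uses $\ghat$. For every coloring, $d_i=r_i-b_i\equiv r_i+b_i=|P_i|\pmod 2$. So directly in $G$, odd parts satisfy $d_i=d$ and even parts satisfy $d_i\ge d-1\ge 0$, and the same sum gives the contradiction. Your closing remark that monochromatic parts are ``the key'' that forces $d_i=d$ is therefore misplaced: parity does that, not monochromaticity. In the paper's argument, by contrast, monochromaticity is genuinely used, to count $|R|$ and $|B|$ and to bound the blue neighbors of $z$.

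\textbf{What each approach buys.} Yours is self-contained, avoiding Observation~\ref{reduction-obs} entirely, and it gives a transparent criterion that also makes the sufficiency checks mechanical. The paper's fits the red-blue removal framework it reuses for the $\SBV$, $\CSB$ and $\PB$ characterizations. There, closed neighborhoods make the balance condition depend on the colors of individual vertices within a part, so a purely per-part parity argument like yours is not available.
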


\begin{proof}
    To prove the  backward direction, we provide  an $\OSB$ coloring of $G$.
For each even part, color half of the vertices red and the other half blue.  If there is exactly one odd part, color it with one more red than blue.  In this case, every vertex in an even part has one extra red neighbor, and every vertex in the odd part has an equal number of red and blue neighbors, so this is an OSB coloring.   Otherwise, $|V(G)|$ is even and hence
 there are an even number of odd parts.   Pair the odd parts and for each pair, color one so that it has one more red than blue vertex and the other so that it has one more blue than red.
Now, any vertex in an even part has an equal number of red and blue neighbors,  any vertex in a red-heavy odd part has one  extra blue neighbor while any vertex in a blue-heavy odd part has one extra red neighbor. Thus, again we have an OSB coloring and  $G\in \OSB$. 

Conversely, let $G$ be a complete multipartite graph with an OSB coloring.  If $|V(G)|$ is even, the first condition is satisfied, so we may assume $|V(G)|$ is odd.  Let    $\ghat$ be the reduced graph, thus $|V(\ghat)|$ is odd and must contain an odd part $P$.  By Observation~\ref{reduction-obs}(1), we know $P$ is monochromatic, and without loss of generality, we may assume that $P$ is monochromatic red. Let $R$ be the set of red vertices and $B$ be the set of blue vertices in $\ghat$.  By Observation~\ref{reduction-obs}(4), we know that our coloring is a OSB coloring of $\ghat$.   For $x \in P$ we know $|N(x)|$ is even and thus $N(x)$ consists of $s$ red and $s$ blue vertices for some integer $s \ge 0$.  Hence $|R| = s + |P|  \ge s+1$ and $|B| = s$.  If $z$ is a blue vertex in $\ghat$ then $N(z)$ contains at least $s+1$ red vertices and at most $s-1$ blue vertices, a contradiction since our coloring of $\ghat$ is OSB.  Hence there are no blue vertices in $\ghat$ which means that $s=0$ and $\ghat$ consists of a single odd part $P$.  By Observation~\ref{reduction-obs}, the graph $G$ has exactly one odd part, as desired.
\end{proof}

We next focus on characterizing the complete multipartite graphs in  $\SBV$.

\begin{lem}
If $G$ is a complete multipartite graph with a red-heavy  SBV coloring then every blue singleton in the reduced graph $\ghat$ must be a singleton in $G$. 

\label{reduce-lemma}
\end{lem}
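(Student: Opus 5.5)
The plan is a direct counting argument in $G$ itself. Red-blue removals only delete a red and a blue vertex from the same part, so the reduced graph $\ghat$ is used only to read off how the colors are distributed inside the relevant part of $G$. Throughout, "red-heavy" is read as the given coloring of $G$ having more red than blue vertices.

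Suppose for a contradiction that $b$ is a blue singleton of $\ghat$ whose part $P$ in $G$ has $|P| \ge 2$. Each red-blue removal deletes one red and one blue vertex of $P$, and all of $P$ except $b$ was deleted. So $P$ consists of $t$ red and $t+1$ blue vertices for some integer $t$. Since $|P| = 2t+1 \ge 2$, we get $t \ge 1$, and therefore $P$ contains a red vertex $r$.

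Let $R$ and $B$ denote the numbers of red and blue vertices of $G$, so $R \ge B+1$ by hypothesis. In a complete multipartite graph every vertex of $P$ has open neighborhood $V(G) - P$, so $N(r) = V(G)-P$. This set contains $R-t$ red and $B-t-1$ blue vertices, a red surplus of $R-B+1 \ge 2$. Hence the coloring is not $1$-balanced at $N(r)$.

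The closed neighborhood $N[r]$ adds the red vertex $r$ itself, giving red surplus $R-B+2 \ge 3$. Hence the coloring is not $1$-balanced at $N[r]$ either. So the coloring is not an SBV coloring at $r$, a contradiction, and $b$ must be a singleton in $G$.

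The argument never uses Observation~\ref{reduction-obs}(4); it works entirely in $G$. The only delicate point is the bookkeeping of the part $P$ through the removals, in particular that $t \ge 1$ whenever $|P| \ge 2$. It is also worth noting why the contradiction is taken at $r$ rather than at $b$. At $b$ the open neighborhood fails, but $N[b]$ has red surplus exactly $R-B$, which can equal $1$. So $b$ itself may be fine, and the red vertex $r$ in the same part is what breaks the SBV condition.
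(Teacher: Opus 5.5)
Your proof is correct and is essentially the paper's argument: the contradiction comes from a red vertex in the same part as the blue singleton. That vertex's open neighborhood has red surplus $|R|-|B|+1\ge 2$, and its closed neighborhood has surplus at least $3$. The paper does this count in the intermediate graph $G'$, obtained from $\ghat$ by restoring one red--blue pair to $z$'s part, so it needs Observation~\ref{reduction-obs}(4) to know the coloring of $G'$ is SBV. You count directly in $G$, which avoids that observation and makes explicit the fact $t\ge 1$ that the paper uses implicitly when it asserts $G'$ is reachable from $G$ by red-blue removals.
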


\begin{proof}
   Let $G$ be a complete multipartite graph with a red-heavy  SBV coloring and let $\ghat$ be its reduced graph.  Let $R$ be the set of red vertices in $\ghat$ and $B$ the set of blue vertices, thus $|R| > |B|$.    Suppose that $z$ is a blue singleton in $\ghat$ and for a contradiction, suppose that $z$ is not a singleton in the original graph $G$.  Let $G'$ be the $2$-colored graph arising from adding one red and one blue vertex to $z$'s part of $\ghat$.  Note that $G'$ can be obtained from $G$ by a sequence of red-blue removals, so the $2$-coloring of $G'$ is an SBV coloring.   Let $w$ be the red vertex in $z$'s part of $G'$.  Then $N(w)$ contains  $|B|-1$  blue vertices and $|R|$ red vertices and $N[w]$ contains  $|B|-1$ blue vertices and $|R|+1$ red vertices, contradicting the fact that the coloring is an SBV coloring. 
\end{proof}

\begin{thm} \label{sbv-cmp} Let $G$ be a complete multipartite graph  on $n$ vertices where the number of singletons is 
  $m_1$     and  the number of non-singleton odd parts is $h$.
Then $G \in \SBV$  if and only if  $n$ is even  or $m_1 \ge h-1$.
\end{thm}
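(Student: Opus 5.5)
The plan is to treat the two directions separately. The backward direction is an explicit colouring, using Theorem~\ref{osb-multipartite-thm} when $n$ is even. The forward direction passes to the reduced graph $\ghat$ and counts vertices there, using Observation~\ref{reduction-obs} and Lemma~\ref{reduce-lemma}. One parity fact is used throughout: $n \equiv m_1 + h \pmod 2$, because even parts contribute evenly, each singleton contributes $1$, and each odd part contributes an odd number.

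\textbf{Backward direction.} If $n$ is even, Theorem~\ref{osb-multipartite-thm} gives $G \in \OSB$, and Proposition~\ref{prop:contain} then gives $G \in \SBV$. So assume $n$ is odd and $m_1 \ge h-1$. By the parity fact, $m_1-h+1$ is even, so $s=(m_1-h+1)/2$ is a nonnegative integer. The colouring is as follows:
\begin{itemize}
\item each even part gets half red and half blue vertices;
\item each non-singleton odd part gets exactly one more red vertex than blue;
\item $s$ of the singletons are red and the remaining $m_1-s$ are blue.
\end{itemize}
Writing $R$ and $B$ for the numbers of red and blue vertices, we get $R-B = h + s-(m_1-s)=1$.

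To check this is an SBV colouring, let $r$ and $b$ be the numbers of red and blue vertices in a vertex's own part. For a red vertex, the red-minus-blue count is $1-(r-b)$ on $N(x)$ and $2-(r-b)$ on $N[x]$. For a blue vertex these are $1-(r-b)$ and $-(r-b)$. In the colouring above, every part has $r-b \in \{-1,0,1\}$, and every red vertex lies in a part with $r-b\in\{0,1\}$. One then checks that each vertex is $1$-balanced at $N(v)$ or at $N[v]$.

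\textbf{Forward direction.} Suppose $n$ is odd and fix an SBV colouring of $G$; by symmetry of colours we may take it to be red-heavy. Each red-blue removal deletes one red and one blue vertex, so in $\ghat$ we still have $d := R-B \ge 1$ with $d$ odd. By Observation~\ref{reduction-obs}, $\ghat$ carries an SBV colouring and all its parts are monochromatic.

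Now take a blue vertex $z$ of $\ghat$ in a blue part of size $q$. Then $N(z)$ has red-minus-blue difference $d+q$, and $N[z]$ has difference $d+q-1$. Being $1$-balanced at $N(z)$ or at $N[z]$ forces $d+q\le 2$, hence $q=1$ and $d=1$. So every blue vertex of $\ghat$ is a blue singleton of $\ghat$, and by Lemma~\ref{reduce-lemma} each is a singleton of $G$; thus $B \le m_1$.

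Next, by Observation~\ref{reduction-obs}(2), each of the $h$ non-singleton odd parts of $G$ survives in $\ghat$ as an odd part. It cannot be a blue part of $\ghat$, since those are singletons of $G$. So it is red and contributes at least one red vertex, giving $R \ge h$. Therefore $m_1 \ge B = R-1 \ge h-1$.

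The main obstacle is the blue-vertex analysis in $\ghat$: this is the step that forces $d=1$ and blue singletons, and it is where Lemma~\ref{reduce-lemma} is needed to pull the singleton property back to $G$. The remaining steps are bookkeeping.
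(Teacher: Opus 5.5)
Your backward direction is correct: for $n$ odd your coloring is exactly the paper's, and for $n$ even citing Theorem~\ref{osb-multipartite-thm} together with Proposition~\ref{prop:contain} is legitimate. Your forward direction rests on a single computation at a blue vertex of $\ghat$. That is a cleaner route than the paper's three-way split by whether $\ghat$ has a red part of size at least $3$, a red part of size $2$, or only red singletons.

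\textbf{The gap.} The forward direction misses a case. You get $d=1$ only from the existence of a blue vertex $z$ in $\ghat$, and nothing guarantees that $\ghat$ has one. If $\ghat$ is entirely red, then $B=0$ and $d=R$, which can be $3$ or more. For instance, every $2$-coloring of $\overline{K_5}$ is SBV because all open neighborhoods are empty. The coloring with four red vertices and one blue vertex reduces to an all-red $\overline{K_3}$. In that case your final chain $m_1\ge B=R-1\ge h-1$ relies on the false equality $B=R-1$. What you have actually proved, $B\le m_1$ and $R\ge h$, gives only $m_1\ge 0$, which does not bound $h$.

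\textbf{The repair.} Suppose $\ghat$ has no blue vertex, and take a red vertex $x$ in a part $P$ of $\ghat$. Both $N(x)$ and $N[x]$ are entirely red, so the SBV condition at $x$ forces $|V(\ghat)|-|P|\le 1$ for every part $P$. If $\ghat$ had two or more parts, this would force exactly two singleton parts, contradicting that $|V(\ghat)|$ is odd. So $\ghat$ is a single odd part. By Observation~\ref{reduction-obs}(2), $G$ then has only one odd part, so $h\le 1$ and $m_1\ge 0\ge h-1$. The paper covers this situation in its first case with $s=0$, where it bounds $h$ by the number of red parts of $\ghat$. With this case added, your proof is complete.
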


\begin{proof}
For the forward direction,  
we provide an SBV coloring  of $G$ when $n$ is even, or when $m_1 \ge h-1$. 
Color  each  even part so that half the vertices are blue and the other half are red.  If  $n$ is even, there are an even number of odd parts.  
 Pair them and in each pair, color one with one extra blue  vertex and the other with one extra red vertex.   This is an SBV coloring.   If $n$ is odd, there are an odd number of odd parts. Color each of the $h$ non-singleton odd parts so that it has one extra red vertex, and color $h-1$ of the singletons blue.  This accounts for an odd number of vertices, so there must be an even number of additional singletons.  Color   half of them red and half of them blue.   One can check that this is also an SBV coloring.
 
Conversely,  suppose we have   an SBV coloring  of $G$.    If $n$ is even, the first condition of the theorem is satisfied, so we assume $n$ is odd and prove that $m_1 \ge h-1$. Let $\ghat$ be the reduced graph, so by Observation~\ref{reduction-obs}, every part of $\ghat$ is monochromatic and our coloring induces an SBV coloring of $\ghat$. Let $R$ be the set of red vertices in $\ghat$ and $B$ the set of blue vertices, and without loss of generality we may assume $|R| > |B|$.

First, suppose there is a monochromatic red part $P$ with at least three vertices and let $T$ be the remaining set of vertices in $\ghat$.   Let $s$ be the number of red vertices in $T$, thus  $|R| = s+ |P|  \ge s + 3$.  If $x \in P$, then $N(x) = T$ and there are $s$ red vertices in $N(x)$ and $s+1$ in $N[x]$.  Thus there are at least $s-1$ blue vertices in $T$ and at most $s+2$, so  and we get $s-1 \le |B| \le s+2$.  
 If $|B| = s-1$ then for a blue vertex $v$, we know $N[v]$ (and $N(v)$) contain at most $s-1$ blue vertices and at least $s+3$ red vertices, a contradiction. Thus $s \le |B|\le s+2$.  Consider a blue vertex $z$ in $T$.  If $z$ is not a singleton in $\ghat$ then $N[z]$ contains at most $s+1$ blue vertices  and at least $s+3$ red vertices, and  $N(z)$  contains at most $s$ blue vertices  and at least $s+3$ red vertices. Each of these is a contradiction since our coloring of $\ghat$ is an SBV coloring.  Thus every blue vertex in $\ghat$ is a singleton, and by Lemma~\ref{reduce-lemma}, these vertices are also singletons of $G$, so $G$ has at least $s$ singletons. The red vertices of $\ghat$ constitute at most $s+1$ parts since $P$ is one part and there are $s$  red vertices in $T$.  Therefore, $G$ has at most $s+1$ non-singleton odd parts  so $h \le s+1$.  However, $G$ has at least $s$ singleton parts arising from the blue vertices of $\ghat$, so $m_1 \ge h-1$ as desired.

Second, suppose that there is a monochromatic red part $P$ with $|P| = 2$ and again let $T$ be the remaining set of vertices in $\ghat$.  Now let $t$ be the number of blue vertices in $T$.  Thus, $|B| = t$.
 If $x \in P$, then $N(x) = T$ and both $N(x)$ and $N[x]$ have $t$ blue vertices. Thus, there are at least $t-2$ red vertices in $T$ and at most $t+1$.  Since $|P|$ is even and $|V(\ghat)|$ is odd we know $|T|$ is odd, so the number of red vertices in $T$ must have the opposite parity from the number of blue vertices in $T$ and hence $T$ contains either $t-1$ or $t+1$ red vertices.  The latter is impossible because a blue vertex in $\ghat$ would have at least $t+3$ red neighbors  and $|B| = t$.  Thus there are exactly $t-1$ red vertices in $T$ and $|R| = t+1$.   Consider a blue vertex $z$ in $\ghat$.  If $z$ is not a singleton then $N(z)$  and $N[z]$  each contain   $t+1$ red vertices  and at most $t-1$ blue vertices, a contradiction.  Thus every blue vertex in $\ghat$ is a singleton, and by Lemma~\ref{reduce-lemma}, these vertices are also singletons of $G$, so $G$ has at least $t$ singletons. The $h$ non-singleton odd  parts of $G$ arise from monochromatic red odd parts of $\ghat$.  But $\ghat$ has only $t-1$ red vertices that can be in an odd part of $G$, so $h \le t-1$, or equivalently, $t \ge h+1$.  Since there are $t$ blue singleton parts in $G$, we know  $m_1 \ge t \ge  h+1 > h-1$, as desired. 
 
It remains to consider the case in which
every red part in $\ghat$ is a singleton.  Let $x$ be a red vertex and $T$ be the remaining set of vertices in $\ghat$.   Let $s$ be the number of red vertices in $T$, thus $|R| = s+1$.  
There are $s$ red vertices in $N(x)$ and $s+1$ in $N[x]$.  Thus there are at least $s-1$ blue vertices in $T$ and at most $s+2$, so we get $s-1 \le |B| \le s+2$.  Since $|V(\ghat)|$ is odd we know $|T|$ is even, so the number of red vertices in $T$ must have the same parity as the number of blue vertices in $T$ and hence $|B|=s $ or $|B| = s+2$.  The latter is impossible since the coloring is red-heavy, thus $|R| = s+1$ and $|B| = s$.  Consider a blue vertex $z$ in $\ghat$.  If $z$ is not a singleton in $\ghat$ then $N[z]$  (and $N(z)$ ) each contain  $s+1$ red vertices and at most $s-1$ blue vertices, a contradiction since our coloring of $\ghat$ is an SBV coloring.  Thus every blue vertex in $\ghat$ is a singleton, and by Lemma~\ref{reduce-lemma}, these vertices are also singletons of $G$, so  $m_1 \ge s$. The remaining vertices of $\ghat$ are $s+1$ red singletons, so they constitute at most $s+1$  non-singleton odd parts in $G$, and by Observation~\ref{reduction-obs} there are no additional non-singleton odd parts of $G$.  Therefore,  $h \le s+1 \le m_1+1$ as desired. 
\end{proof}

The most complex of our characterization theorems in this section is for CSB graphs.
The next   lemma is helpful in  allowing us to conclude that certain parts in a CSB coloring will be monochromatic.

\begin{lem}
Let $G$ be an $n$ vertex complete multipartite graph with a CSB coloring.  If $n$ is even then every odd part is monochromatic, and if $n$ is odd then every even part is monochromatic.   

\label{multi-lem}
\end{lem}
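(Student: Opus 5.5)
The plan is to look at two vertices in the same part and compare their closed neighborhoods. Let $P$ be a part of $G$, and suppose for contradiction that $P$ contains a red vertex $x$ and a blue vertex $y$. Since $G$ is complete multipartite, $N(x)=N(y)=V(G)\setminus P$. It follows that
\[N[x]=(V(G)\setminus P)\cup\{x\}, \qquad N[y]=(V(G)\setminus P)\cup\{y\}.\]

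Let $r$ and $b$ be the numbers of red and blue vertices in $V(G)\setminus P$. Then $N[x]$ has $r+1$ red and $b$ blue vertices, while $N[y]$ has $r$ red and $b+1$ blue vertices. A CSB coloring is $1$-balanced at both, so $|r+1-b|\le 1$ and $|r-b-1|\le 1$. These two inequalities say $r-b\in\{-2,-1,0\}$ and $r-b\in\{0,1,2\}$, hence $r=b$.

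Therefore $|V(G)\setminus P|=r+b=2r$ is even. This means $|P|\equiv n \pmod 2$ for any part $P$ that is not monochromatic. So if $n$ is even, a non-monochromatic part must be even, and every odd part is monochromatic. If $n$ is odd, a non-monochromatic part must be odd, and every even part is monochromatic.

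No step here is a real obstacle. The only points to check are that vertices in the same part have identical open neighborhoods and the small arithmetic on the two balance inequalities.
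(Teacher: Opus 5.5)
Your proposal is correct and follows essentially the same argument as the paper. You take a red and a blue vertex in the same part and use $1$-balance at both closed neighborhoods to force $r=b$. This makes $|N(x)|=n-|P|$ even, which rules out a non-monochromatic part whose size has parity opposite to $n$.
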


\begin{proof}
Fix a CSB coloring of an $n$ vertex complete multipartite graph $G$. Suppose that vertices $x$ and $y$ are in the same part where $x$ is red and $y$ is blue.  Let $N(x)$ consist of $x_R$ red vertices and $x_B$ blue vertices. Note that $N(y) = N(x)$.   The coloring is $1$-balanced at $N[x]$ so $-1 \le (1+ x_R) - x_B \le 1$ and so 
 $-2 \le x_R - x_B \le 0$.  The closed neighborhood $N[y]$ consists of $x_R$ red vertices  and $1 + x_B$ blue vertices. Our coloring is $1$-balanced at $N[y]$, so $-1 \le x_R - (1+x_B) \le 1$ or equivalently
 $0 \le x_R - x_B \le 2$.  Thus, $x_R-x_B = 0$ and the coloring is $0$-balanced at $N(x)$ and $N(y)$. Thus, $|N(x)| $ is even (as is $|N(y)| )$.  If $n$ is even and $x$ is in an odd part, then  $|N(x)| $ is odd, a contradiction.  So for $n$ even, all odd parts are monochromatic.  Similarly, if $n$ is odd and $x$ is in an even part then  $|N(x)| $ is odd, a contradiction.  Hence for $n$ odd, all even parts are monochromatic.
\end{proof}

A complete multipartite graph with just one part or in which each part is a singleton, has the form $\overline{K_n}$  or $K_n$.  These graphs are   CSB if and only if $n$ is even.   The next theorem handles the remaining instances.

\begin{thm} 
Let $G$ be a complete multipartite graph  on $n$ vertices that has more than one part, and a part that is not a singleton. 
Let $m_1$ be the number of singletons, $m_2$ be the number of parts of size 2, and $h$ be the number of non-singleton odd parts.
Then $G\in \CSB$  if and only if one of the following two conditions hold:

    \begin{enumerate}
        
        \item \label{csb-01}  $n$ is even and every odd part is a singleton.  
        \item \label{csb-02} 
         $n$ is odd, all even parts have size $2$,  and $m_1 \ge h + 2m_2-1$. 
         
    
       \end{enumerate}
\label{csb-complete-multi-thm}     
\end{thm}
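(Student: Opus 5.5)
The plan is to prove the two directions separately: explicit colorings for sufficiency, and for necessity Lemma~\ref{multi-lem} together with red-blue removals (Observation~\ref{reduction-obs} and Lemma~\ref{reduce-lemma}). Throughout, write $R$ and $B$ for the sets of red and blue vertices. The basic identity is that for $v$ in a part $P$, the closed neighborhood is $N[v]=(V\setminus P)\cup\{v\}$. So the colour counts of $N[v]$ are determined by $|R|,|B|$, the colouring of $P$, and the colour of $v$.

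\emph{Sufficiency.} In case~\ref{csb-01}, every odd part is a singleton, so since $n$ is even the number $m_1$ of singletons is even. Colour half the singletons red and half blue, and colour each even part half red and half blue. Then $|R|=|B|$, each singleton sees a $0$-balanced $N[v]=V$, and a vertex of an even part sees counts differing by exactly $1$.

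In case~\ref{csb-02}, aim for a red-heavy colouring with $|R|=|B|+1$, coloured as follows:
\begin{itemize}
\item each non-singleton odd part of size $2a+1$ gets $a+1$ red and $a$ blue vertices;
\item every part of size $2$ is coloured entirely red;
\item among the singletons, $h+2m_2-1$ are blue and the rest are split evenly between red and blue.
\end{itemize}
The remaining number of singletons is even because $n$ odd forces $m_1+h$ to be odd. The hypothesis $m_1\ge h+2m_2-1$ is exactly what makes this possible. A short computation with the identity above then gives the following closed-neighborhood differences: $1$ for red or blue vertices of odd parts, $0$ for vertices of red $2$-parts, and $1$ for singletons.

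\emph{Necessity.} Fix a CSB coloring; it is also an SBV coloring. Use Observation~\ref{reduction-obs}(4) to pass to the reduced graph $\ghat$, which is CSB and has all parts monochromatic. Lemma~\ref{multi-lem} already says that the parts which cannot shrink in $\ghat$ are monochromatic in $G$: odd parts when $n$ is even, even parts when $n$ is odd.

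For $n$ even, suppose an odd part $P$ with $|P|\ge3$ is, say, monochromatic red. Compare $N[x]$ for $x\in P$ with $N[y]$ for a blue $y$ outside $P$. If no blue vertex exists, the hypothesis that $G$ has more than one part gives an immediate imbalance. Otherwise $N[y]\supseteq P$, so $N[y]$ contains at least $|P|-1\ge2$ more red vertices, relative to blue, than $N[x]$ does. Both differences must lie in $[-1,1]$, which is a contradiction.

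For $n$ odd, an even part of size $\ge4$ is monochromatic by Lemma~\ref{multi-lem}. The same comparison (a vertex inside the part versus a vertex of the opposite colour outside it) gives a shift of at least $3$, a contradiction; so all even parts have size $2$.

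\emph{Main obstacle.} The hardest step is the inequality $m_1\ge h+2m_2-1$ for $n$ odd. Assume without loss of generality that the coloring is red-heavy. I would follow the SBV argument of Theorem~\ref{sbv-cmp}, working in $\ghat$, in three steps.
\begin{enumerate}
\item Show that $|R|=|B|+1$ in $\ghat$, using the closed-neighborhood balance at a singleton or at a vertex of a red part.
\item Show that every blue vertex of $\ghat$ is a singleton of $\ghat$, and hence a singleton of $G$ by Lemma~\ref{reduce-lemma}.
\item Show that every $2$-part is red. A blue $2$-part gives difference $2$ at its vertices, exactly as in the construction.
\end{enumerate}
Each non-singleton odd part of $G$ leaves at least one red vertex in $\ghat$, and each $2$-part contributes two red vertices. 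Hence $|R|\ge h+2m_2$, so $|B|\ge h+2m_2-1$. All of these blue vertices are singletons, which yields $m_1\ge h+2m_2-1$. The delicate point is step~1, handling the case where a red part of $\ghat$ has size $\ge3$ while singletons are also present. I expect to use the parity of $|V(\ghat)|$, as in the proof of Theorem~\ref{sbv-cmp}, to pin $|R|-|B|$ down to exactly $1$.
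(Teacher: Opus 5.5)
Your sufficiency colorings are the paper's, and your necessity argument for $n$ odd is sound. The $n$ even step, however, does not work as written. Write $d(S)$ for the number of red minus blue vertices in $S$. For $x\in P$ (monochromatic red) and a blue $y$ in a part $Q\neq P$, one has $d(N[y])-d(N[x])=(|P|-1)-d(Q\setminus\{y\})$.

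Your claimed shift of at least $|P|-1$ therefore holds only when $Q\setminus\{y\}$ has no red surplus, for instance in $\ghat$ or when $Q$ is blue-heavy. It fails for an arbitrary blue $y$ of $G$.

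More importantly, even then the shift is only $2$ when $|P|=3$ and $y$ is a blue singleton. A shift of $2$ is compatible with $d(N[x])=-1$ and $d(N[y])=1$, so ``both differences lie in $[-1,1]$'' yields no contradiction.

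The missing idea is parity. Since $n$ is even and $|P|$ is odd, $|N[x]|=n-|P|+1$ is even, so $1$-balance at $N[x]$ forces $0$-balance there. Then $N(x)$ has exactly one more blue than red vertex, which guarantees a blue-heavy part $Q$. For a blue $y\in Q$ you get $d(N[y])\ge |P|-1\ge 2$, a genuine contradiction. This is exactly the paper's argument. (Run in $\ghat$, the same fix reads: $|V(\ghat)|$ is even, so $|R|-|B|$ is even, and this excludes the only surviving value $|R|-|B|=1$.) The same caveat about choosing $y$ applies to your separate ``shift at least $3$'' argument for even parts of size at least $4$ when $n$ is odd: it is valid only in $\ghat$ or for $y$ in a blue-heavy part.

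For $n$ odd, your route genuinely differs from the paper's. The paper stays in $G$. It takes an even part, which is monochromatic red by Lemma~\ref{multi-lem}, and uses the same parity trick to get a blue-heavy part. It then shows that every blue-heavy part is a blue singleton and every even part is red of size $2$, and counts at a blue singleton. When there are no even parts, it simply cites Theorem~\ref{sbv-cmp}. Your reduced-graph argument handles both cases uniformly and trades that citation for Lemma~\ref{reduce-lemma}.

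The obstacle you anticipate in step 1 is not real. If $y$ is a blue vertex of $\ghat$ in its monochromatic part $Q$, then $d(N[y])=(|R|-|B|)+|Q|-1\le 1$. Since $|R|-|B|$ is odd and positive, this forces $|R|-|B|=1$ and $|Q|=1$ at once, so steps 1 and 2 come together.

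A blue vertex must exist in $\ghat$. Otherwise, balance at any red vertex would make $\ghat$ a single part. Every other part of $G$ would then consist entirely of red-blue pairs, contradicting Lemma~\ref{multi-lem} together with the hypothesis that $G$ has more than one part.

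The same computation at a red vertex of $\ghat$ gives $2-|P|\ge -1$, so red parts of $\ghat$ have size at most $3$. This yields ``all even parts have size $2$'' directly, with no separate comparison needed.
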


\begin{proof}
Let $G$ be a complete multipartite graph on $n$ vertices that has more than one part and a part that is not a singleton. 
We begin by proving the backward direction and suppose that $G$ satisfies one of the three conditions.

If $G$ satisfies (\ref{csb-01}), then $n$ is even,  and every odd part is a singleton, so there must be an even number of singletons.  
 Color half the singletons red and half of them blue.  The remaining parts each have even size, so for each one, color half the vertices red and the other half blue.  This is a CSB coloring.

Next, consider graphs $G$ satisfying (\ref{csb-02}), so $n$ is odd and any even parts have size $2$.  If $h=0$, then all odd parts are singletons and  $m_1$ must be odd.  Color   the  $2m_2$ vertices in  the size $2$ parts red and color $2m_2-1$ of the singletons blue.  An even number of singletons remain, and we color half of them red and the other half blue.  This is a CSB coloring. Otherwise, $h \ge 1$ and $m_1$ and $h$ have the opposite parity because the number of odd parts of $G$ must be odd, thus $m_1 = (h-1) + 2m_2 + 2a$  for some nonnegative integer $a$.
 Color each vertex in a size $2$ part red, and for each of the $h$ non-singleton odd parts, color  the vertices so that there is one more red than blue. Color $a$ of the singletons red and the remaining $(h-1) + 2m_2 + a$ singletons blue.  The total number of red vertices is one more than the total number of blue vertices, so the coloring in $1$-balanced at $N[x]$ when $x$ is a singleton.  The coloring is $0$-balanced at $N[y]$ for each $y$ in a part of size $2$, and  also $0$-balanced at $N(z)$ for each $z$ in a non-singleton odd part, so $1$-balanced at $N[z]$. Thus, we have a CSB coloring.

\smallskip

Conversely,  suppose that $G$ has a CSB coloring and first consider the case in which $n$ is even. By Lemma~\ref{multi-lem}, every odd part is monochromatic.
Let $P$ be an odd part of $G$ and without loss of generality we may assume $P$ is monochromatic red.  For $x \in  P$ we know $|N[x]|$ is even so the coloring is $0$-balanced at $N[x]$ and hence  $N[x]$ consists of $s +1$ red and  $s+1$ blue vertices for some $s$.    Thus  in $G$ there are a total of $s + |P|$ red vertices and $s+1$ blue vertices.  Then $N(x)$ consists of $s$ red and $s+1$ blue vertices, so there must be a part $Q$ of $G$ that is blue-heavy.  Let $Q$ consist of $q_B$ blue vertices and $q_R$ red vertices, so $q_B \ge q_R + 1$.  For a blue vertex $y \in Q$ we know $N[y]$ consists of $s + |P| - q_R$ red vertices and   $s + 2 - q_B$ blue vertices.  The coloring is CSB so the difference between these quantities is at most $1$ and we get
$|P| + (q_B - q_R) -2 \le 1$.  Since $q_B - q_R \ge 1$ we know $|P| \le 2$.  However, $P$  is an odd part, so $|P| = 1$ and we conclude that every odd part of $G$ is a singleton as desired.

 It remains to consider the case which $n$ is odd. By Theorem~\ref{sbv-cmp}, $m_1\geq h-1$, so if there are no even parts, then the statement (2) holds.   Otherwise, assume that $G$ contains at least one even part and note that by Lemma~\ref{multi-lem}, every even part is monochromatic.
Let  $P$ be an even part of $G$ and without loss of generality we may assume that $P$ is monochromatic red.  For $x \in  P$ we know $|N[x]|$ is even so the coloring is $0$-balanced at $N[x]$ and hence  $N[x]$ consists of $s +1$ red and  $s+1$ blue vertices for some $s$.    Thus  in $G$ there are a total of $s + |P|$ red vertices and $s+1$ blue vertices.   Then $N(x)$ consists of $s$ red and $s+1$ blue vertices, so there must be a part $Q$ of $G$ that is blue-heavy.  Let $Q$ consist of $q_B$ blue vertices and $q_R$ red vertices, so $q_B \ge q_R + 1$.  For a blue vertex $y \in Q$ we know $N[y]$ consists of $s + |P| - q_R$ red vertices and   $s + 2 - q_B$ blue vertices.  So as before we know this difference is at most $1$ and we get $|P| + (q_B - q_R)  \le 3$.  Since $q_B \ge q_R + 1$, and $|P|$ is even, we know $|P| = 2$ and $q_B = q_R + 1$. If there exists a red vertex $z$  in   $Q$, then $N[z]$ consists of  $s+2-q_R+1$ red vertices and $s+1-q_B$ blue vertices.  However, the difference between these quantities is   $(s+2-q_R+1)-(s+1-q_B)=
2+q_B-q_R = 3$, a contradiction since the coloring is $1$-balanced at $N[z]$. Thus, every blue-heavy part of $G$  is a blue singleton  and  the $h$ non-singleton odd parts of $G$ are all red-heavy.  Thus, $G$ consists of $m_2$ monochromatic red parts of size $2$,  $h$ red-heavy non-singleton odd parts, and $m_1$ singleton parts.  Each non-singleton odd part of $G$ has at least one more red than blue vertex and each part of size $2$ has two more red than blue vertices.  Thus, for a blue singleton $v$ in $G$ the closed neighborhood $N[v]$ has at least $h + 2m_2$ more red than blue vertices from non-singleton parts, and hence there must be at least $h+2m_2 -1$ blue singletons in $N[v]$.  Therefore, $m_1 \ge h + 2m_2 -1$, as desired.
\end{proof}

Since $2$-colorings that are PB are also OSB and CSB, we can use Theorems~\ref{osb-multipartite-thm} and  \ref{csb-complete-multi-thm}  and  Lemma~\ref{multi-lem}  in characterizing those complete multipartite graphs that are parity balanced.

\begin{thm}
A complete multipartite graph $G$ with at least two parts is in $\PB$  if and only if  $|V(G)|$ is even and every odd part is a singleton. 

\label{pb-complete-multi-thm}
\end{thm}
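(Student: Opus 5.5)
For the backward direction, I will build an explicit PB coloring. Suppose $|V(G)|$ is even and every odd part is a singleton. Then the number of singletons is even. I will color half the singletons red and half blue, and color each even part with half red and half blue vertices, so that $G$ has equally many red and blue vertices overall. Now take a vertex $v$ in an even part. Its part contributes equally many red and blue vertices, and after deleting that part the remaining set is still balanced. So $N(v)$ is $0$-balanced, and $|N(v)| = n - |P|$ is even, so $\deg(v)$ is even, which is the parity required by Definition~\ref{PB-def}. Next take a singleton $v$. Then $N[v] = V(G)$ is $0$-balanced, and $\deg(v) = n-1$ is odd, again as required. So the coloring is a PB coloring.

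For the forward direction, I will use $\PB \subseteq \CSB \cap \OSB$ from Proposition~\ref{prop:contain}. First I will rule out $n$ odd. In that case a vertex in an even part has odd degree and so needs $0$-balance at $N[x]$, while a vertex in an odd part has even degree and needs $0$-balance at $N(x)$. I will argue directly. Since $G$ has at least two parts and $n$ is odd, some part is odd. Let $P$ be an odd part, and fix a PB coloring. By Lemma~\ref{multi-lem} applied to the CSB coloring, every even part is monochromatic. For an odd part $P$, I will show $P$ is monochromatic using the argument of Lemma~\ref{multi-lem}: if $x,y\in P$ have different colors, then $N(x)=N(y)$ is $0$-balanced, and that is consistent, so instead I will pass to the reduced graph $\ghat$. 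By Observation~\ref{reduction-obs}(4), the PB coloring induces a PB coloring of $\ghat$. Since $|V(\ghat)|$ is odd, $\ghat$ contains an odd part. By Theorem~\ref{osb-multipartite-thm} applied to $\ghat$ (the OSB argument there), $\ghat$ consists of a single monochromatic odd part $P$. If $\ghat$ were one odd part, its vertices are isolated with degree $0$, which is even, and $N(x)=\emptyset$ is $0$-balanced. So I need more than $\ghat$ alone.

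Here is where I will go back to $G$. Any vertex of $G$ outside the part containing $P$ lies in a part that was fully removed, so that part is even, contains equally many red and blue vertices, and has odd degree. Its closed neighborhood then has a color surplus of $|P| \ge 1$ from $P$, plus $\pm 1$ from itself. Requiring $0$-balance at $N[x]$ for both a red and a blue vertex of that even part forces a contradiction. Such a part exists because $G$ has at least two parts and $P$'s part is only one of them. The case where the other parts are odd cannot happen, because all odd parts of $G$ survive in $\ghat$. Hence $n$ is even.

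With $n$ even, every PB coloring is CSB, and $G$ has a non-singleton part whenever it is not $K_n$. So condition~(\ref{csb-01}) of Theorem~\ref{csb-complete-multi-thm} gives that every odd part is a singleton. If $G = K_n$ with $n$ even, every part is a singleton and there is nothing to prove. I expect the main obstacle to be the $n$ odd case, in particular keeping track of how the even parts that were fully removed interact with the surviving monochromatic odd part.
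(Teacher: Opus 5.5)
Your proof is correct. The backward direction and the $n$ even case match the paper's. You are also right to set $K_n$ aside, since the hypothesis of Theorem~\ref{csb-complete-multi-thm} excludes it; the paper passes over this silently.

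The two proofs differ in how they rule out $n$ odd.
\begin{itemize}
\item The paper uses only the \emph{statement} of Theorem~\ref{osb-multipartite-thm}, so $G$ has exactly one odd part, with $p_R>p_B$ say. It then uses Lemma~\ref{multi-lem} to make every even part monochromatic. Next it picks a monochromatic blue even part $Q$, which exists because $N(x)$ for $x$ in the odd part contains $s\ge 1$ blue vertices. Counting at $N[z]$ for $z\in Q$ gives $p_R-p_B=1-|Q|<0$.
\item You instead use the stronger conclusion inside the \emph{proof} of Theorem~\ref{osb-multipartite-thm}: $\ghat$ is a single monochromatic odd part $P$. So every other part of $G$ is fully removed, hence balanced and even. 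Counting at a vertex of such a part that has $P$'s color gives imbalance $|P|+1\ge 2$ at its closed neighborhood.
\end{itemize}
The paper's route is more modular, since it cites only statements of earlier results. Yours needs a stronger input but then only a trivial count.

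Your write-up already contains a shorter finish. You invoked Lemma~\ref{multi-lem} to say every even part of $G$ is monochromatic. A monochromatic part loses no vertices under red-blue removal, so it cannot be fully removed. Combined with $\ghat$ being a single part, this forces $G$ to have only one part, contradicting the hypothesis with no neighborhood count at all. When you write this up, delete the abandoned attempt to show odd parts are monochromatic, and use either this finish or the count, not a mixture of both.
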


\begin{proof}
Let $G$ be a complete multipartite graph with at least two parts. To prove the backwards direction, we assume that $|V(G)|$ is even and each odd part is a singleton.  Thus, $G$ consists of even parts and an even number of singletons.  Color half of the singletons red and the other half blue, and for each even part, color half the vertices red and the other half blue.   Each vertex $x$ in an even part has even degree and the coloring is $0$-balanced at $N(x)$.  Each singleton vertex $y$ has odd degree and the coloring is $0$-balanced at $N[y]$. Thus, our coloring is a PB coloring and $G \in \PB$.

 Conversely, let $G$ be a complete multipartite graph with at least two parts that is in $ \PB$.
First, we show that $|V(G)|$ is even.  For a contradiction, assume $|V(G)|$ is odd.   
Since PB colorings are also OSB colorings, we use Theorem~\ref{osb-multipartite-thm} to conclude that $G$ has exactly one odd part $P$.  Let $P$ consist of $p_R$ red vertices and $p_B$ blue vertices, and without loss of generality we may assume that $p_R > p_B$.  For $x \in P$ we know  $|N(x)|$ is even, so $N(x)$ consists of $s$ red and $s$ blue vertices for some $s \ge 1$.  Since $G$ has exactly one odd part, the remaining parts are even and by Lemma~\ref{multi-lem}, each of them is monochromatic.  Let $Q$ be a monochromatic blue part, so $|Q|$ is even and $|Q| \ge 2$.  For $z \in Q$, we know $|N[z]|$ is even and consists of $s + p_B - |Q| + 1$ blue vertices and $s+p_R$ red vertices.  The coloring is PB, so it is $0$-balanced at $N[z]$ and thus these quantities are equal.  Hence $p_R - p_B = 1-|Q|$.  This is a contradiction since $p_R - p_B > 0$ and 
$1-|Q|< 0$. 

 Thus, $|V(G)|$ is even.  Our PB coloring is also a CSB coloring so we can apply Theorem~\ref{csb-complete-multi-thm} to conclude that every odd part of $G$ is a singleton.
\end{proof}

We conclude by showing that, in contrast to Theorem~\ref{arb-large-thm}, each of the balance numbers for complete multipartite graphs is at most $3$.

\begin{thm}
If $G$ is a complete multipartite graph, then  $\beta_2([G])\leq 2$, \ $\beta_2(G)\leq 2$, and   $\beta_2[G]\leq 3$. Each bound is tight.

\end{thm}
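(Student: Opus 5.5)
A single coloring will give all three bounds at once. Split each part of $G$ as evenly as possible, and choose which odd parts are red-heavy and which are blue-heavy so that the red-heavy and blue-heavy odd parts differ in number by at most one. Concretely: let $G$ have parts $P_1,\dots,P_r$. Color each even part with half red and half blue. If there are $o$ odd parts, make $\lceil o/2\rceil$ of them red-heavy and $\lfloor o/2\rfloor$ blue-heavy, each by exactly one vertex. Then the whole graph has $|R|-|B|\in\{0,1\}$.

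For the open bound, take a vertex $x$ in part $P$. Then $N(x)=V(G)\setminus P$, so the red-minus-blue count of $N(x)$ is $(|R|-|B|)$ minus the imbalance of $P$. The imbalance of $P$ is $0$ or $\pm1$, so this value lies in $\{-1,0,1,2\}$. In absolute value it is at most $2$, giving $\beta_2(G)\le 2$.

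For the closed bound, $N[x]$ adds $x$ itself, changing the difference by $\pm1$. The closed imbalance is therefore at most $3$, giving $\beta_2[G]\le 3$. For the local bound, apply Lemma~\ref{balance-lemma}\ref{bal-lem-pt2}: $\beta_2([G])\le \beta_2(G)\le 2$. It is cleaner to refine, though: when $N(x)$ has difference $2$, the vertex $x$ lies in a blue-heavy part, so $x$ may be chosen blue, and then $N[x]$ has difference $1$. So the open case already handles the local bound, and one could even try to improve the closed estimate by choosing which vertex of each part receives which color. I would not need that refinement, since $3$ is claimed tight.

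Tightness is the main work, and I would use the characterization theorems. For $\beta_2(G)=2$, I need a complete multipartite graph not in $\OSB$. By Theorem~\ref{osb-multipartite-thm}, $K_{3,3,3}$ qualifies ($|V|$ odd, three odd parts), so $\beta_2\ge 2$. For $\beta_2([G])=2$, the paper already notes $K_{3,3,3}\notin\SBV$ by Theorem~\ref{sbv-cmp} ($n=9$ odd, $m_1=0<h-1=2$). So $K_{3,3,3}$ shows both of the first two bounds are tight.

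For $\beta_2[G]=3$, I need a graph with no $[2,2]$-balanced coloring. By Lemma~\ref{even-deg-lem}, parity helps: if every degree is even, then $\beta_2[G]$ is odd, so it suffices to show $\beta_2[G]\ne 1$, i.e.\ $G\notin\CSB$. Take $K_{3,3,3}$ again: every degree is $6$, and it is not in $\CSB$ because $\CSB\subseteq\SBV$ by Proposition~\ref{prop:contain}. Hence $\beta_2[K_{3,3,3}]\ge 3$, so it equals $3$. The main obstacle I anticipate is checking the parity claim carefully and confirming that the balancing construction really achieves closed imbalance at most $3$ in every case, including singleton parts. In that case $N[x]$ equals the whole vertex set minus nothing, so the difference is $|R|-|B|\le 1$, which is fine.
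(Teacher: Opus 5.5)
Your proof is correct and follows the paper's: the same balanced coloring is $2$-balanced at every $N(v)$, the other two upper bounds follow from it, and $K_{3,3,3}$ together with Lemma~\ref{even-deg-lem} gives tightness; your lower-bound citations are equally valid (Theorem~\ref{osb-multipartite-thm} directly, and $\CSB\subseteq\SBV$ in place of Theorem~\ref{csb-complete-multi-thm}). Your side remark that $x$ ``may be chosen blue'' is not right as stated, since when $|R|-|B|=1$ a red vertex in a blue-heavy part of size at least $3$ still has closed imbalance $3$, but nothing in your argument depends on it.
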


\begin{proof}
Let $G$ be a complete multipartite graph.  For each even part, color half of the vertices red and the other half blue.   Pair the odd parts and for each pair, color one so that it has one more red than blue vertex and the other so that it has one more blue than red.
There may be an odd part remaining, and, if so, color it with one more red than blue.  One can check that this coloring is $2$-balanced at $N(v)$ for every vertex $v$ and thus $\beta_2(G)\leq 2$.   By Lemma~\ref{balance-lemma} \ref{bal-lem-pt2}, we know $\beta_2([G])\leq 2$ and, by Lemma~\ref{balance-lemma} \ref{bal-lem-pt4}, we have 
$\beta_2[G] \leq 3$.

It remains to prove that these bounds are tight.  By Theorem~\ref{sbv-cmp}, we know $K_{3,3,3} \not\in \SBV$ and thus $\beta_2([K_{3,3,3}]) \ge 2$, showing that the bound  $\beta_2([G])\leq 2$  is tight.  Lemma~\ref{balance-lemma} \ref{bal-lem-pt2} implies that $\beta_2(K_{3,3,3})  \ge \beta_2([K_{3,3,3}]) $ so 
$\beta_2(K_{3,3,3}) \ge 2$ and the bound  $\beta_2(G)\leq 2$  is tight.  Finally, every vertex in $K_{3,3,3}$ has even degree, so $\beta_2[K_{3,3,3}]$ is odd by Lemma~\ref{even-deg-lem} and $\beta_2[K_{3,3,3}] > 1 $ by Theorem~\ref{csb-complete-multi-thm}.  Thus, $\beta_2[K_{3,3,3}] \ge 3 $ and the bound $\beta_2[G] \leq 3$ is tight.
\end{proof}

\section{Future Work}

In this paper, we considered coloring the vertices of a graph $G$ using $k$ colors so that for each vertex $v$ there is a balance of colors in $N(v)$ or $N[v]$.  This framework suggests several natural directions for further investigation.  One extension is to consider $\lambda$-balance at  broader  vertex neighborhoods, such as the set of the vertices within distance two.

Another direction is to vary the optimization criteria.  In our work, we minimize the maximum difference between the number of vertices in each color class within each neighborhood.  Instead, one could minimize the sum of these differences.  Alternatively, we could minimize the total number of vertices that must fail to be $0$-balanced in any $k$-coloring of a graph. 

A different variation is to consider edge-colored graphs in place of vertex-colored graphs.  In this context, the goal is to balance the number of edges of each color incident to each vertex.      

    \smallskip
    
We gratefully acknowledge the American Institute of Mathematics (AIM) and their support of research collaboration. We started working together on this topic at the AIM workshop, {\it Graph Theory: structural properties, labelings, and connections to applications}, held July 22-26, 2024.


\begin{thebibliography}{99}

\bibitem{A_arxiv} M.G. Almeida. Quasi Neighborhood Balanced Coloring of Graphs, arXiv:2512.24293 (2025).

\bibitem{APGS_arxiv} M. Almeida, R. Pawar, S. Gupta, T. Singh. Closed Neighborhood Balanced $k$-Coloring of Graphs, arXiv:2510.16666 (2025).

\bibitem{ASGP_arxiv} M.G Almeida, T. Singh, S. Gupta, R. Pawar. Neighborhood Balanced k-Coloring of Graphs, arXiv:2509.06003 (2025).

\bibitem{Coetal}  K.L. Collins, M Bowie, N.B. Fox,  B. Freyberg, J. Hook, A.M. Marr, C. McBee,  A.~Semani\v{c}ov\'{a}-Fe\v{n}ov\v{c}\'{i}kov\'a, A. Sinko, and A.N. Trenk,
\newblock Closed neighborhood balanced coloring of graphs, \emph{Graphs Combin.} {\bf 41} (2025) Article no. 88. https://doi.org/10.1007/s00373-025-02950-5

\bibitem{FM24} B.~Freyberg and A.~Marr, Neighborhood balanced colorings of graphs,
\emph{Graphs Combin.} {\bf 40} (2024) Article no. 41. https://doi.org/10.1007/s00373-024-02766-9.

\bibitem{Larson} C. Larson, personal communication. He credits the proof to Virginia Commonwealth University undergraduate, Yunus Bidav.

\bibitem{MS_arxiv} M. Minyard, M.R. Sepanski.  Neighborhood Balanced 3-Coloring, arXiv:2410.05422 (2024).

\bibitem{Stanley-EC1} R. P. Stanley, {\it Enumerative Combinatorics Volume 1}, second edition, Cambridge Stud. Adv. Math. {\bf 49}, Cambridge University Press, Cambridge 2012. 


\bibitem{W69} M.~E.~Watkins, A theorem on Tait colorings with an application to the generalized Petersen graphs,\emph{ J. Comb. Theory} {\bf 6(2)} (1969) 152-164.
\bibitem{We01} D.B.~West, \emph{Introduction to Graph Theory,} Prentice-Hall Inc., NJ, 2nd edition (2001).







\end{thebibliography}
\end{document}